\def\ol#1{\overline{#1}}% 		overline
\def\wh#1{\widehat{#1}}% 	wide hat
\def\wt#1{\widetilde{#1}}% 	wide tilde
\theoremstyle{plain}
    \newtheorem{theorem}{Theorem}[subsection]
    \newtheorem{theoremint}{Theorem}
    \newtheorem{proposition}[theorem]{Proposition}
    \newtheorem{lemma}[theorem]{Lemma}
    \newtheorem{corollary}[theorem]{Corollary}
\theoremstyle{definition}
    \newtheorem{definition}[theorem]{Definition}
    \newtheorem{remark}[theorem]{Remark}
\numberwithin{equation}{subsection}
\def\Alphabet{A,B,C,D,E,F,G,H,I,J,K,L,M,N,O,P,Q,R,S,T,U,V,W,X,Y,Z}%  Capitalized Alphabet
\def\alphabet{a,b,c,d,e,f,g,h,i,j,k,l,m,n,o,p,q,r,s,t,u,v,w,x,y,z}%	lowercase alphabet
\def\endpiece{xxx}%									marks end of list
\def\makeAlphabet[#1]{\expandafter\makeA#1,xxx,}%		Ex. \makeAlphabet[A,B]
\def\makealphabet[#1]{\expandafter\makea#1,xxx,}%		Ex. \makealphabet[c,d]
\def\makeA#1,{\def\temp{#1}\ifx\temp\endpiece\else%
\mkbb{#1}\mkfrak{#1}\mkbf{#1}\mkcal{#1}\mkscr{#1}\expandafter\makeA\fi}%
\def\makea#1,{\def\temp{#1}\ifx\temp\endpiece\else\mkfrak{#1}\mkbf{#1}\expandafter\makea\fi}%
\def\mkbb#1{\expandafter\def\csname bb#1\endcsname{\mathbb{#1}}}%   
\def\mkfrak#1{\expandafter\def\csname fr#1\endcsname{\mathfrak{#1}}}%    Define frak
\def\mkbf#1{\expandafter\def\csname b#1\endcsname{\mathbf{#1}}}%           Define bold letters
\def\mkcal#1{\expandafter\def\csname c#1\endcsname{\mathcal{#1}}}%       Define calligraphy
\def\mkscr#1{\expandafter\def\csname s#1\endcsname{\mathscr{#1}}}%       Define script
\def\makeop[#1]{\xmakeop#1,xxx,}%					Ex. \makeop[Hom,Spec]
\def\mkop#1{\expandafter\def\csname #1\endcsname{{\mathrm{#1}}}} % 
\def\xmakeop#1,{\def\temp{#1}\ifx\temp\endpiece\else\mkop{#1}\expandafter\xmakeop\fi}%
\def\R{\mathbb{R}}
\def\mfg{{\mathfrak g}}
\def\ari#1{\widehat{#1}}
\def\ch{{\rm ch}}
\def\bbCA{{\mathcal A}}
\def\ac1{{\widehat{\rm c}_1}}
\def\an{{\rm an}}
\def\M0{M^0}
\def\sLog{\sL\!\!\operatorname{og}}
\def\isom{\cong}
\def\prolim{\varprojlim}
\def\Isocda{\Isoc^{\kern-0.5mm\dagger}}
\def\isom{\cong}
\def\prolim{\varprojlim}
\def\sLog{\sL\!\!\operatorname{og}}
\def\cyc{{\rm cyc}}
\def\BC{{\rm BC}}
\def\Id{{\rm Id}}
\begin{document}
%--- The Title ------------------------------------------------------------------------------------------------------------------------
\title[Norm compatible elements]{Higher analytic torsion, polylogarithms and norm compatible elements on
abelian schemes}
\begin{abstract}
We give a simple axiomatic description of the degree $0$ part of the polylogarithm on abelian schemes 
and show that its realisation in analytic Deligne cohomology can be described in terms of 
the Bismut-K\"ohler higher analytic torsion form of the Poincar\'e bundle.
\end{abstract}
\author{Guido Kings }
\author{Damian R\"ossler}
\thanks{G.K. was partly  supported by the DFG via grant
SFB 1085 "Higher Invariants".}
\maketitle
\tableofcontents

%%%%%%%%%%%%%%%%%%%%%%%%%%%%%%%%%%%%%%%%%%%%%%%%%%%%%%%%%%%%%%%%%%%%%%
%
%
%
\section*{Introduction}
%
%
%
%%%%%%%%%%%%%%%%%%%%%%%%%%%%%%%%%%%%%%%%%%%%%%%%%%%%%%%%%%%%%%%%%%%%%%
Norm compatible units play an important role in the arithmetic 
of cyclotomic fields and elliptic curves, especially in the context
of Iwasawa theory and Euler systems. The norm compatible units
in these cases are related to rich classical objects like polylogarithm functions and Eisenstein series. 

The search for analogues in the case of abelian schemes was for
a long time obstructed by the fact that the focus was narrowed on units rather than on 
classes in other $K$-groups. If one generalizes the question to
classes in $K_1$, then two different constructions were given in \cite{Kings99} and
\cite{Loopaper}. The first construction in \cite{Kings99}
relies on the motivic polylogarithm and gives not only norm compatible classes but a whole
series of classes satisfying a distribution relation. The second 
approach in \cite{Loopaper} depends on the arithmetic Riemann-Roch theorem applied to 
the Poincar\'e bundle and constructs a certain current, which
turns out to be in the image of the regulator map from $K_1$. 

It is a natural guess that these two approaches should be related or
in fact more or less equivalent. In the following paper we show that this expectation is founded but in a rather ad hoc way. To explain our result,
let us fix some notation. Let $\pi:\cA\to S$ be an abelian scheme of relative dimension $g$, let $\epsilon:S\to \cA$ be the zero section,
$N>1$ an integer and let $\cA[N]$ be the finite group scheme of $N$-torsion points. Here $S$ is smooth over a subfield of the complex numbers. Then
the (zero step of the) motivic polylogarithm  is a class in motivic cohomology 
\[
\pol^{0}\in H^{2g-1}_\sM(\cA\backslash \cA[N],g).
\] 
To describe it more precisely, consider the residue map along $\cA[N]$
\[
H^{2g-1}_\sM(\cA\backslash \cA[N],g)\to H^{0}_\sM(\cA[N]\backslash \epsilon(S),0).
\]
If we denote by $(\cdot)^{(0)}$ the generalized eigenspace of $\tr_{[a]}$
with eigenvalue $a^{0}=1$, then the residue map becomes an isomorphism 
\[
H^{2g-1}_\sM(\cA\backslash \cA[N],g)^{(0)}\isom H^{0}_\sM(\cA[N]\backslash \epsilon(S),0)^{(0)}
\]
and $\pol^{0}$ is the unique element mapping to the fundamental class
of $\cA[N]\backslash \epsilon(S)$.

Similarly, the current $\mfg_{\cA^\vee}$ on $\cA$ constructed in \cite{Loopaper} gives rise
to a class in analytic Deligne cohomology
\[
([N]^*\mfg_{\cA^\vee}-N^{2g}\cdot \mfg_{\cA^\vee})|_{\cA\backslash\cA[N]}\in H^{2g-1}_{D,\an}((\cA\backslash \cA[N])_\R, \R(g)),
\]
which lies in the image of the regulator map
\[
\cyc_\an:H^{2g-1}_\sM(\cA\backslash \cA[N],g)\to 
H^{2g-1}_{D,\an}((\cA\backslash \cA[N])_\R, \R(g)).
\]
We prove: 
 \begin{theoremint}
 We have 
 $$-2\cdot \cyc_\an(\pol^0)=([N]^*\mfg_{\cA^\vee}-N^{2g}\cdot \mfg_{\cA^\vee})|_{\cA\backslash\cA[N]}.$$
 
 Furthermore, the map $$H^{2g-1}_{\mathcal M}(\cA\backslash\cA[N],g)^{(0)}\to 
 H^{2g-1}_{D,\an}((\cA\backslash\cA[N])_\bbR,\bbR(g))$$ induced by $\cyc_\an$ is injective.
 \label{polgg}
 \end{theoremint}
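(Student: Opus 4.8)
The plan is to deduce both assertions from the localization (residue) sequence along $\cA[N]$, using the axiomatic description of $\pol^0$ recalled above together with the compatibility of the regulator $\cyc_\an$ with residues. For the injectivity statement, consider the commutative square
\[
\begin{CD}
H^{2g-1}_\sM(\cA\backslash\cA[N],g)^{(0)} @>\cyc_\an>> H^{2g-1}_{D,\an}((\cA\backslash\cA[N])_\R,\R(g))\\
@V{\mathrm{res}}V{\isom}V @VV{\mathrm{res}}V\\
H^{0}_\sM(\cA[N]\backslash\epsilon(S),0)^{(0)} @>\cyc_\an>> H^{0}_{D,\an}((\cA[N]\backslash\epsilon(S))_\R,\R(0)),
\end{CD}
\]
whose left vertical arrow is an isomorphism by the axiomatic description. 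Since $\cA[N]\backslash\epsilon(S)$ is finite \'etale over $S$, the group $H^0_\sM(\cA[N]\backslash\epsilon(S),0)$ is free on its set of connected components and the bottom arrow is the componentwise cycle class, hence injective. A diagram chase (if $\cyc_\an(x)=0$ then $\cyc_\an(\mathrm{res}(x))=\mathrm{res}(\cyc_\an(x))=0$, so $\mathrm{res}(x)=0$, so $x=0$) then gives injectivity of the top arrow, and composing with the inclusion of the $(0)$-eigenspace proves the second statement.

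For the identity, put $c:=([N]^*\mfg_{\cA^\vee}-N^{2g}\cdot\mfg_{\cA^\vee})|_{\cA\backslash\cA[N]}$. Since $c$ lies in the image of $\cyc_\an$, write $c=\cyc_\an(\xi)$. Applying the projector onto the $(0)$-eigenspace (which commutes with $\cyc_\an$), and using that $c$ itself lies in $H^{2g-1}_{D,\an}((\cA\backslash\cA[N])_\R,\R(g))^{(0)}$ --- a consequence of the isogeny-transformation behaviour of $\mfg_{\cA^\vee}$ inherited from that of the Poincar\'e bundle, together with the vanishing of the $(0)$-eigenspace of $H^{j}_{D,\an}(\cA_\R,\R(g))$ for $j\le 2g-1$, since $\tr_{[a]}$ acts there through eigenvalues $a^{2g-j}$ with $2g-j\ge 1$ --- one may assume $\xi\in H^{2g-1}_\sM(\cA\backslash\cA[N],g)^{(0)}$. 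By the characterisation of $\pol^0$, by the fact that the motivic residue is an isomorphism on the $(0)$-eigenspace, and by the injectivity of $\cyc_\an$ on $H^0_\sM(\cA[N]\backslash\epsilon(S),0)$ established above, it then suffices to prove
\[
\mathrm{res}(c)=-2\cdot\cyc_\an(\text{fundamental class of }\cA[N]\backslash\epsilon(S))
\]
in $H^0_{D,\an}((\cA[N]\backslash\epsilon(S))_\R,\R(0))$; this forces $\xi=-2\,\pol^0$ and hence $c=-2\,\cyc_\an(\pol^0)$.

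It remains to compute $\mathrm{res}(c)$. Using that $\mfg_{\cA^\vee}$ is, modulo a smooth form, a Green current for the zero section $\epsilon(S)$ normalised via $\|\cdot\|^2$ as in \cite{Loopaper}, one has $[N]^*\delta_{\epsilon(S)}=\delta_{\cA[N]}$, while $[N]^*$ of the ambient smooth $(g,g)$-form equals $N^{2g}$ times it; hence on $\cA\backslash\cA[N]$ the current $c$ is $\ddc$-closed, and its residue along a component of $\cA[N]\backslash\epsilon(S)$ is read off from the $\log\|\cdot\|^2$-coefficient of $[N]^*\mfg_{\cA^\vee}$ near that component, the term $N^{2g}\mfg_{\cA^\vee}$ being smooth there and contributing nothing. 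The factor $2$ is exactly the one relating such $\log\|\cdot\|^2$-type singularities to the standard generator of the analytic Deligne cohomology of a punctured neighbourhood, and the sign is fixed by the orientation convention of the residue map; carrying this out componentwise yields the displayed identity.

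The main obstacle is this last computation: pinning down $\mathrm{res}(c)$ --- including the constant $-2$ and the eigenspace property of $c$ --- requires unwinding the construction of $\mfg_{\cA^\vee}$ in \cite{Loopaper} via the arithmetic Riemann--Roch theorem for the Poincar\'e bundle and the Bismut--K\"ohler higher analytic torsion form, controlling the singularities of the resulting current along the torsion subschemes (in particular checking that the components of the torsion form of degree $>2g-1$ do not contribute to the residue in degree $2g-1$), and verifying that the normalisation of the analytic Deligne complex used in \cite{Loopaper} is the one underlying the regulator $\cyc_\an$. A mismatch at any of these points would alter only the numerical constant, but getting them right is precisely what pins the value down to $-2$.
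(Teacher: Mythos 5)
Your argument, in both halves, hinges on a residue map out of analytic Deligne cohomology --- the right-hand vertical arrow of your commutative square, and the computation of $\mathrm{res}(c)$ for $c:=([N]^*\mfg_{\cA^\vee}-N^{2g}\cdot\mfg_{\cA^\vee})|_{\cA\backslash\cA[N]}$ --- compatible via $\cyc_\an$ with the motivic residue. No such map exists: analytic Deligne cohomology has no localisation/Gysin sequences, and this is precisely the failure mode the paper singles out in the remark following Theorem \ref{polg} (and already in the introduction: ``analytic Deligne cohomology does not come with residue maps''). The injectivity half of your argument is repairable: instead of a residue on the Deligne side one composes $\cyc_\an$ with the comparison map $\phi_B$ into Betti cohomology, where residues do exist because Betti cohomology is a twisted Poincar\'e duality theory, and one uses the injectivity of $H^0_\sM(\cA[N]\backslash\epsilon_\cA(S),0)\to H^0_{D,\an}$; this is exactly Lemma \ref{injlem}. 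The identity half is not repairable along your lines: even granting that $c=\cyc_\an(\xi)$ with $\xi$ in the $(0)$-eigenspace (itself not a one-liner --- it is Lemma \ref{eqal}, proved using the exact sequence \ref{arfundseq}, the hermitian bundle $\overline{\Sigma}$ and the multiplicativity of $\ari{\ch}$), there is no analytic residue with which to pin $\xi$ down, so the reduction ``it suffices to compute $\mathrm{res}(c)$'' has no meaning, and your proposed local analysis of the $\log$-type singularities of $[N]^*\mfg_{\cA^\vee}$ has no target group in which to be read. Your final paragraph locates the difficulty in nailing the constant $-2$; the obstruction actually occurs one step earlier, in the nonexistence of the map you want to apply.

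The paper gets around this by a specialisation argument instead. Both $\pol^0$ (via Proposition \ref{torin-prop} and Lemma \ref{rescom}) and the canonical currents (remarks after Theorem \ref{mainth0}, Lemma \ref{BClem}) are compatible with base change; after trivialising the torsion (Lemma \ref{consttor}, Lemma \ref{consttorth}) one reduces to the universal abelian scheme over the fine moduli space ${\rm A}_{g,d,M}$, and there, by the injectivity already established together with Lemma \ref{isolem}, it suffices to verify the statement after restriction to a single closed fibre which is a product of elliptic curves (Lemma \ref{implem}). For one elliptic curve the identity --- including the factor $-2$ --- comes from the explicit Weierstrass sigma-function formula for $\mfg_{E^\vee}$ of \cite{Loopaper} and the distribution relations for elliptic units; products are handled by Lemmata \ref{green-lemma} and \ref{mot-lem}, and Lemma \ref{prothlink} converts the resulting statement about the translates $\sigma^*\mfg_{\cA^\vee}-\mfg_{\cA^\vee}$ into the asserted one about $[N]^*\mfg_{\cA^\vee}-N^{2g}\cdot\mfg_{\cA^\vee}$. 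So the constant $-2$ is ultimately forced by the elliptic-units computation plus injectivity after specialisation, not by a local residue computation in analytic Deligne cohomology; to make your proposal into a proof you would have to replace its core mechanism by an argument of this kind.
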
 
 (Theorem \ref{polgg} is Theorem \ref{polg} below)
 
Unfortunately, the obvious strategy to prove this theorem (ie to show that the class in analytic Deligne cohomology described above satisfies the same axiomatic 
properties as the zero step of the polylogarithm) fails because analytic Deligne cohomology does not 
come with residue maps. 
Instead we proceed as follows. We first show that the theorem is true for an abelian scheme
if it is true for one of its closed fibres. After that, using the compatibility 
of both the polylogarithm and the current under base change we show that it suffices to consider the universal abelian scheme over
a suitable moduli space of abelian varieties. There we have a special fibre, which is a
product of elliptic curves, where the theorem can be checked by
direct computations.

\textbf{Acknowledgements:} The authors would like to thank
Fr\'ed\'eric D\'eglise for providing a useful reference on Gysin sequences.  
%%%%%%%%%%%%%%%%%%%%%%%%%%%%%%%%%%%%%%%%%%%%%%%%%%%%%%%%%%%%%%%%%%%%%%
%
%
%
\section{Notations}
%
%
%
%%%%%%%%%%%%%%%%%%%%%%%%%%%%%%%%%%%%%%%%%%%%%%%%%%%%%%%%%%%%%%%%%%%%%%
We fix a base scheme $S$, which is irreducible, smooth and quasi-projective
over a field. This condition is necessary to apply the results of
Deninger-Murre on the decomposition of the Chow-motive of an abelian scheme.
We will work with an abelian scheme $\pi:{\mathcal A}\to S$ 
of fixed relative dimension $g$ with
unit section $\epsilon:S\to {\mathcal A}$. For any variety over a field
Soul\'e \cite{Soule-Operations} has defined motivic cohomology and homology 
\begin{align*}
H^i_\sM(X,j)&:=\Gr_\gamma^jK_{2j-i}(X)\otimes \bbQ\\
H_i^\sM(X,j)&:=\Gr^\gamma_jK'_{i-2j}(X)\otimes \bbQ,
\end{align*}
which form a twisted Poincar\'e duality theory in the sense of
Bloch-Ogus. 
%%%%%%%%%%%%%%%%%%%%%%%%%%%%%%%%%%%%%%%%%%%%%%%%%%%%%%%%%%%%%%%%%%%%%%
%
%
%
\section{Norm compatible elements on abelian schemes}
%
%
%
%%%%%%%%%%%%%%%%%%%%%%%%%%%%%%%%%%%%%%%%%%%%%%%%%%%%%%%%%%%%%%%%%%%%%%
%%%%%%%%%%%%%%%%%%%%%%%%%%%%%%%%%%%%%%%%%%%%%%%%%%%%%%%%%%%%%%%%%%%%%%
%
\subsection{The trace operator}
%
%%%%%%%%%%%%%%%%%%%%%%%%%%%%%%%%%%%%%%%%%%%%%%%%%%%%%%%%%%%%%%%%%%%%%%
Fix an integer $a>1$ prime to the characteristic of the ground field and let
$\cB\to S$ be an abelian scheme. In the applications $\cB$ will
be ${\mathcal A}$ or ${\mathcal A}\times_S\cdots\times_S{\mathcal A}$. We consider open 
sub-schemes $W\subset \cB$ with the property that 
$$
j:[a]^{-1}(W)\subset W
$$
is an open immersion,
where $[a]:\cB\to\cB$ is the $a$-multiplication on $\cB$.
As $[a]$ is finite \'etale and $j$ an open immersion, pull-back
$j^*$ and push-forward $[a]_*$ are defined on motivic homology and
cohomology. 
\begin{definition} For open sub-schemes $W\subset \cB$ as above,
we define the \emph{trace map} with respect to $a$ by
$$
\tr_{[a]}:H^\cdot_\sM(W,*)\xrightarrow{j^*}H^\cdot_\sM([a]^{-1}(W),*)
\xrightarrow{[a]_*}H^\cdot_\sM(W,*)
$$
and
$$
\tr_{[a]}:H_\cdot^\sM(W,*)\xrightarrow{j^*}H_\cdot^\sM([a]^{-1}(W),*)
\xrightarrow{[a]_*}H_\cdot^\sM(W,*).
$$
For any integer $r$, we let
$$
H^\cdot_\sM(W,*)^{(r)}:=\{\xi\in H^\cdot_\sM(W,*)\mid (\tr_{[a]}-a^r\id)^k\xi=0
\mbox{ for some }k\ge 1 \}
$$
be the generalized eigenspace of $\tr_{[a]}$ of \emph{weight} $r$.
\label{deftra}
\end{definition}
\begin{remark}
Of course, one can use any twisted Poincar\'e theory in the sense
of Bloch-Ogus instead of motivic cohomology for the definition of
the trace operator and its properties established below. 
\end{remark}
\begin{lemma}
Suppose that $\tr_{[a]}$ and $\tr_{[b]}$ are defined on 
$H^\cdot_\sM(W,*)$ and $H_\cdot^\sM(W,*)$, then the actions
of $\tr_{[a]}$ and $\tr_{[b]}$ commute.
\end{lemma}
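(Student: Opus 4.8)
The plan is to rewrite each of the composites $\tr_{[b]}\circ\tr_{[a]}$ and $\tr_{[a]}\circ\tr_{[b]}$ as one and the same operator, ``supported'' on the open subscheme $[ab]^{-1}(W)$, by moving the open-immersion pull-backs past the finite-\'etale push-forwards via the base change formula available in any Bloch--Ogus theory. Granting this, commutativity is a purely formal consequence of the functoriality of $(\cdot)_*$ and $(\cdot)^*$ together with the identity $[a]\circ[b]=[b]\circ[a]=[ab]$ of endomorphisms of $\cB$.

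First I would fix notation. Put $W_a:=[a]^{-1}(W)$, $W_b:=[b]^{-1}(W)$ and $W_{ab}:=[ab]^{-1}(W)$, and let $j_a\colon W_a\hookrightarrow W$, $j_b\colon W_b\hookrightarrow W$ be the open immersions supplied by the hypothesis. Because $[a]$ and $[b]$ commute, $W_{ab}=[a]^{-1}(W_b)=[b]^{-1}(W_a)$; moreover $W_{ab}\subseteq W_a$ and $W_{ab}\subseteq W_b$ (for instance $[b]x\in W_a$ gives $[a][b]x=[b][a]x\in W$, hence $[a]x\in[b]^{-1}(W)\subseteq W$, i.e.\ $x\in W_a$). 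Thus every inclusion below is an open immersion; I write $\iota\colon W_{ab}\hookrightarrow W_a$ and $k\colon W_{ab}\hookrightarrow W$ for the two new ones, and I note that $[a]$ restricts to finite \'etale maps $W_a\to W$ and $W_{ab}\to W_b$, and likewise $[b]$.

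The one step with actual content is the base change attached to the square
\begin{CD}
W_{ab} @>{\iota}>> W_a\\
@V{[a]}VV @V{[a]}VV\\
W_b @>{j_b}>> W
\end{CD}
which is cartesian: since $j_b$ is an open immersion, $W_a\times_W W_b$ is the open subscheme $([a]|_{W_a})^{-1}(W_b)=[ab]^{-1}(W)\cap W_a=W_{ab}$ of $W_a$. One pair of parallel sides here is finite \'etale (in particular proper) and the other is an open immersion (in particular flat), so the Bloch--Ogus base change formula applies and gives, on both $H^\cdot_\sM$ and $H_\cdot^\sM$, the identity $j_b^*\circ[a]_*=[a]_*\circ\iota^*$. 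Feeding this into the definition,
\begin{align*}
\tr_{[b]}\circ\tr_{[a]}
&=[b]_*\circ j_b^*\circ[a]_*\circ j_a^*
=[b]_*\circ[a]_*\circ\iota^*\circ j_a^*\\
&=[ab]_*\circ k^*,
\end{align*}
where the last equality uses $[b]_*\circ[a]_*=([b]\circ[a])_*=[ab]_*$ (as maps out of $W_{ab}$) and $j_a\circ\iota=k$. The right-hand side is manifestly symmetric in $a$ and $b$, since $[ba]^{-1}(W)=W_{ab}$ and $[ba]|_{W_{ab}}=[ab]|_{W_{ab}}$; running the same argument with $a$ and $b$ interchanged therefore identifies $\tr_{[a]}\circ\tr_{[b]}$ with the very same operator $[ab]_*\circ k^*$. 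Hence the two actions commute, on homology as well as on cohomology (on homology one uses the restriction maps along the open immersions, which the text has already declared to be defined).

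I expect the only genuine obstacle to be a bookkeeping one: checking that the displayed square is cartesian and invoking the base change formula for it. Both are standard in the Bloch--Ogus formalism --- for a square with one finite \'etale side the transfer/base-change compatibility is classical (and can, if desired, be traced back to the corresponding properties of Soul\'e's operations on $K$-theory) --- and the remainder of the argument is only the functoriality of push-forward and pull-back.
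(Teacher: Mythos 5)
Your proof is correct and is essentially the paper's own argument: the paper's commuting diagram encodes exactly your base-change identity $j_b^*\circ[a]_*=[a]_*\circ\iota^*$ for the cartesian square with one finite \'etale side, identifying both composites with $[ab]_*\circ k^*$, which is symmetric in $a$ and $b$. The same reasoning applies verbatim to homology, as you note.
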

\begin{proof}
This follows immediately from the diagram
\[
\xymatrix{
H^\cdot_{\sM}([a]^{-1}W,*)\ar[r]^{[a]_*}\ar[d]& H^\cdot_{\sM}(W,*)\ar[d]^{[a]_*}\\
H^\cdot_{\sM}([ab]^{-1}W,*)\ar[r]^{[ab]_*}& H^\cdot_{\sM}(W,*)\\
H^\cdot_{\sM}([b]^{-1}W,*)\ar[r]^{[b]_*}\ar[u]& H^\cdot_{\sM}(W,*)\ar[u]_{[b]_*}
}
\]
and in exactly the same way for homology.
\end{proof}
The next lemma shows that the localization sequences are equivariant
for the $\tr_{[a]}$-action in many situations.
\begin{lemma}\label{tr-and-loc-seq}
Let $W\subset \cB$ be an open sub-scheme with $[a]^{-1}W\subset W$ and
$Z\subset W$ be a closed sub-scheme. 
 Define $\wt Z$ by the cartesian
diagram
$$
\xymatrix{
Z\ar@^{(->}[r]&W\\
\wt Z\ar@^{(->}[u]\ar@^{(->}[r]&[a]^{-1}W\ar@^{(->}[u]}
$$
and assume that $[a](\wt Z)=Z$. Then the localization sequence
$$
\to H^\sM_\cdot(Z,*)\to  H^\sM_\cdot(W,*)\to  H^\sM_\cdot(W\backslash Z,*)\to 
$$
is equivariant for the $\tr_{[a]}$-action. 
\end{lemma}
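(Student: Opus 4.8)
The plan is to prove the equivariance by exhibiting the localization sequence as a morphism of long exact sequences built out of two commuting "$j^*$" and "$[a]_*$" pieces, and checking that each of these two pieces is individually a morphism of localization sequences. Concretely, the trace $\tr_{[a]}$ on homology is by definition the composite $[a]_*\circ j^*$, so it suffices to show: (i) restriction along the open immersion $j\colon [a]^{-1}W\hookrightarrow W$ induces a map of localization sequences from the triple $(Z,W,W\backslash Z)$ to the triple $(\wt Z,[a]^{-1}W,[a]^{-1}W\backslash\wt Z)$; and (ii) proper pushforward along $[a]$ induces a map of localization sequences back from the latter triple to the former. Composing (i) and (ii) termwise gives exactly $\tr_{[a]}$ in each degree, and the composite of two morphisms of long exact sequences is again a morphism of long exact sequences, which is the assertion.

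First I would set up (i). Since $[a]$ is finite étale, $[a]^{-1}W$ is open in $\cB$, and by the cartesian diagram defining $\wt Z$ one has $\wt Z = [a]^{-1}(Z)\cap [a]^{-1}W$ is closed in $[a]^{-1}W$ with open complement $[a]^{-1}W\backslash\wt Z = [a]^{-1}(W\backslash Z)$ (using that $[a]^{-1}$ commutes with the open/closed decomposition). Restriction along the three compatible open immersions $Z\cap[a]^{-1}W$—wait, more precisely along $j$ and its restrictions—gives a morphism of localization sequences because the localization (Gysin) sequence in motivic homology is contravariantly functorial for open immersions, and these restrictions are compatible with the closed/open inclusions by construction of the cartesian square; this is a formal property of the twisted Poincaré duality theory of Bloch–Ogus, which Soulé's theory satisfies.

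Next I would set up (ii). Here one uses that $[a]\colon [a]^{-1}W\to W$ is finite, hence proper, and restricts to a finite map $\wt Z\to Z$ — and this is precisely where the hypothesis $[a](\wt Z)=Z$ is used, to guarantee that the pushforward of $\wt Z$ lands in $Z$ and not merely in some closed subscheme containing $Z$, so that proper pushforward is defined compatibly on all three terms $\wt Z\to Z$, $[a]^{-1}W\to W$, and $[a]^{-1}W\backslash\wt Z\to W\backslash Z$. Proper pushforward is covariantly functorial on motivic homology and commutes with the connecting maps of the localization sequence (again a Bloch–Ogus axiom), so $[a]_*$ is a morphism of long exact sequences. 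Then termwise composition of (i) and (ii) yields the $\tr_{[a]}$-equivariance claimed. The main obstacle, such as it is, is not conceptual but bookkeeping: one must verify carefully that the open complement of $\wt Z$ in $[a]^{-1}W$ really is $[a]^{-1}(W\backslash Z)$ and that all the squares relating $j^*$, $[a]_*$ and the inclusion-induced maps commute on the nose, so that the composite really is $\tr_{[a]}$ on each of the three schemes simultaneously; once these compatibilities are in place, the statement is immediate from the functoriality axioms for the localization sequence, and nothing deeper (e.g.\ about the specific geometry of abelian schemes) is needed beyond finiteness of $[a]$ and the hypothesis $[a](\wt Z)=Z$.
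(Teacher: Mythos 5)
Your overall two-step strategy (factor $\tr_{[a]}$ as $j^*$ followed by $[a]_*$ and exhibit each piece as a morphism of localization sequences) is the same as the paper's, but your identification of $\wt Z$ is wrong, and this is not bookkeeping: it is exactly the point where your argument fails. In the cartesian square of the lemma the right-hand vertical arrow is the open immersion $j\colon [a]^{-1}W\hookrightarrow W$, not the isogeny $[a]$, so $\wt Z=j^{-1}(Z)=Z\cap [a]^{-1}W$, and not $[a]^{-1}(Z)\cap[a]^{-1}W$. (Under your reading one would have $\wt Z=[a]^{-1}(Z)$, the hypothesis $[a](\wt Z)=Z$ would be automatic since $[a]$ is finite surjective, the hooked arrow $\wt Z\hookrightarrow Z$ would not be an immersion, and the operator your ladder induces on the closed term would be $[a]_*[a]^*$, i.e.\ multiplication by $a^{2g}$ on $H^0$, rather than pushforward along the automorphism $[a]$ of ${\mathcal A}[N]\backslash\epsilon(S)$, which is the operator the paper needs so that the fundamental class has weight $0$ in Corollary \ref{corfund}.) With the correct $\wt Z$ your step (i) is fine -- it is what the paper establishes by rewriting the homology sequence as the localization sequence in cohomology with supports (using smoothness of $W$) and invoking its functoriality for the cartesian square of closed immersions -- but your step (ii) breaks down: it is false in general that $[a]^{-1}W\backslash\wt Z=[a]^{-1}(W\backslash Z)$, and $[a]$ does not map $[a]^{-1}W\backslash\wt Z$ into $W\backslash Z$. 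In the very case the lemma is applied to (Proposition \ref{decomp-prop}, $W=\cA$, $Z=\epsilon(S)$) one has $\wt Z=\epsilon(S)$, so $[a]^{-1}W\backslash\wt Z=\cA\backslash\epsilon(S)$, and the nonzero $a$-torsion points lie in this open set but are sent by $[a]$ into $Z$; hence the third vertical map of your pushforward ladder simply does not exist.

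The missing idea is the comparison between the two different ``preimages of $Z$'', namely $j^{-1}(Z)=\wt Z$ and $[a]^{-1}(Z)$. Proper covariance of the localization sequence applies to $[a]\colon[a]^{-1}W\to W$ with the full preimage $Z'':=[a]^{-1}(Z)\cap[a]^{-1}W$, whose open complement is $[a]^{-1}(W\backslash Z)$; to splice this with the restriction ladder of step (i) you must insert the comparison of the localization sequences attached to the nested closed subschemes $\wt Z\subset Z''$ of $[a]^{-1}W$, with pushforward along the closed immersion on the closed terms, the identity in the middle, and restriction on the open terms. Note that the inclusion $\wt Z\subset[a]^{-1}(Z)$ is precisely the hypothesis $[a](\wt Z)\subset Z$, which is also what guarantees $[a]^{-1}(W\backslash Z)\subset W\backslash Z$, i.e.\ that $\tr_{[a]}$ is even defined on the open complement; so this hypothesis does considerably more than making the pushforward of $\wt Z$ land in $Z$. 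Once this intermediate ladder is inserted, the termwise composites are exactly the three trace operators ($\mathrm{res}$ to $\wt Z$ followed by $[a]_*$ on $H(Z)$, and likewise for $W$ and $W\backslash Z$ via $[a]^{-1}W$ and $[a]^{-1}(W\backslash Z)$), and the equivariance follows; this, together with the passage through cohomology with supports for the restriction step, is in substance the argument of the paper.
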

\begin{proof} 
As $W$ is smooth and irreducible, the localization sequence
in motivic homology is isomorphic to the localization sequence in
cohomology with supports. This localization sequence is functorial with respect to 
Cartesian diagrams of closed immersion by the usual Bloch-Ogus axioms, which leads to a commutative diagram
$$
\xymatrix{
\ar[r]&H^\cdot_{Z,\sM}(W,*)\ar[r]\ar[d]& H^\cdot_{\sM}(W,*)\ar[r]\ar[d]& H^\cdot_{\sM}(W\backslash Z,*)\ar[r]\ar[d]&\\
\ar[r]&H^\cdot_{\wt Z,\sM}([a]^{-1}W,*)\ar[r]\ar[d]_\isom& H^\cdot_{\sM}([a]^{-1}W,*)\ar[r]\ar[d]_\isom& H^\cdot_{\sM}([a]^{-1}W\backslash \wt Z,*)\ar[r]\ar[d]_\isom&\\
\ar[r]&H_{2g-\cdot}^{\sM}(\wt Z,*)\ar[r]& H_{2g-\cdot}^{\sM}([a]^{-1}W,*)\ar[r]& H_{2g-\cdot}^{\sM}([a]^{-1}W\backslash \wt Z,*)\ar[r].&
}
$$
If we combine this with the functoriality of the homology localization
sequence for the proper morphism $[a]$, we get the desired result.
\end{proof}
%%%%%%%%%%%%%%%%%%%%%%%%%%%%%%%%%%%%%%%%%%%%%%%%%%%%%%%%%%%%%%%%%%%%%%
%
\subsection{Consequences of the motivic decomposition of the diagonal}
%
%%%%%%%%%%%%%%%%%%%%%%%%%%%%%%%%%%%%%%%%%%%%%%%%%%%%%%%%%%%%%%%%%%%%%%
Let $\Delta\subset {\mathcal A}\times_S{\mathcal A}$ be the relative diagonal and
$$
\cl(\Delta)\in H_\sM^{2g}({\mathcal A}\times_S{\mathcal A}, g)
$$
its class in motivic cohomology. The main result by Deninger and Murre
in \cite[Theorem 3.1]{De-Mu} implies that there is a decomposition 
\begin{equation}
\cl(\Delta)=\sum_{i=0}^{2g}\pi_i\in H_\sM^{2g}({\mathcal A}\times_S{\mathcal A}, g),
\end{equation}
where the $\pi_i$ are idempotents for the composition of correspondences
and mutually commute. The $\pi_i$ have the important property that
for all integers $a$
$$
[a]_*\pi_i=a^{2g-i}\pi_i.
$$
\begin{proposition}\label{decomp-prop}
There is a decomposition into $\tr_{[a]}$-eigenspaces
$$
H^\cdot_\sM({\mathcal A},*)\isom \bigoplus_{r=0}^{2g}H^\cdot_\sM({\mathcal A},*)^{(r)},
$$
which is independent of $a$.
Moreover, 
$$
H^\cdot_\sM({\mathcal A}\backslash \epsilon(S),*)^{(0)}=0.
$$
\end{proposition}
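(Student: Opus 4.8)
The plan is to deduce both statements from the Deninger--Murre decomposition $\cl(\Delta)=\sum_{i=0}^{2g}\pi_i$ recalled above, together with Lemma~\ref{tr-and-loc-seq}. For the first assertion I first note that on $W=\cA$ one has $[a]^{-1}(\cA)=\cA$, so that $j=\id$ and $\tr_{[a]}$ coincides with the pushforward $[a]_*$ on $H^\cdot_\sM(\cA,*)$. Since $\cl(\Delta)$ acts as the identity and the $\pi_i$ are mutually orthogonal idempotents for the composition of correspondences, the operators $\pi_i$ cut out a direct sum decomposition $H^\cdot_\sM(\cA,*)=\bigoplus_{i=0}^{2g}\pi_iH^\cdot_\sM(\cA,*)$. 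As correspondences act functorially on motivic cohomology, the relation $[a]_*\pi_i=a^{2g-i}\pi_i$ says exactly that $\tr_{[a]}$ acts on $\pi_iH^\cdot_\sM(\cA,*)$ as multiplication by $a^{2g-i}$; because $a>1$, the exponents $2g-i$ for $i=0,\dots,2g$ are pairwise distinct, so these summands are honest $\tr_{[a]}$-eigenspaces and $\pi_iH^\cdot_\sM(\cA,*)=H^\cdot_\sM(\cA,*)^{(2g-i)}$. Reindexing gives the asserted decomposition, and since the $\pi_i$ do not depend on $a$, neither does it.

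For the vanishing statement I would use the localization sequence of the closed immersion $\epsilon(S)\hookrightarrow\cA$. The crucial point is that $H^\cdot_\sM(\cA,*)^{(0)}$ is precisely the image of the Gysin map $\epsilon_*\colon H^{\cdot-2g}_\sM(S,*-g)\to H^\cdot_\sM(\cA,*)$. Indeed, by the first part $H^\cdot_\sM(\cA,*)^{(0)}=\pi_{2g}H^\cdot_\sM(\cA,*)$, and the extremal Deninger--Murre projector is $\pi_{2g}=[\cA\times_S\epsilon(S)]$, whose action on motivic cohomology is $\epsilon_*\circ\pi_*$; since $\pi_*\circ\epsilon_*=(\pi\circ\epsilon)_*=\id$, the operator $\epsilon_*\circ\pi_*$ has image $\epsilon_*\big(H^{\cdot-2g}_\sM(S,*-g)\big)$ and $\epsilon_*$ is injective. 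On the other hand, by purity $H^\cdot_{\epsilon(S),\sM}(\cA,*)\isom H^{\cdot-2g}_\sM(S,*-g)$, and under this isomorphism the natural map $H^\cdot_{\epsilon(S),\sM}(\cA,*)\to H^\cdot_\sM(\cA,*)$ (forgetting supports) becomes $\epsilon_*$, hence is injective. Consequently the long localization sequence breaks up into short exact sequences
$$0\longrightarrow H^\cdot_{\epsilon(S),\sM}(\cA,*)\xrightarrow{\ \epsilon_*\ }H^\cdot_\sM(\cA,*)\longrightarrow H^\cdot_\sM(\cA\backslash\epsilon(S),*)\longrightarrow 0 .$$
By Lemma~\ref{tr-and-loc-seq}, applied with $W=\cA$ and $Z=\epsilon(S)$ (so that $\wt Z=\epsilon(S)$ and $[a](\wt Z)=Z$, using $[a]\circ\epsilon=\epsilon$), this sequence is $\tr_{[a]}$-equivariant. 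Hence $H^\cdot_\sM(\cA\backslash\epsilon(S),*)\isom H^\cdot_\sM(\cA,*)/H^\cdot_\sM(\cA,*)^{(0)}=\bigoplus_{r=1}^{2g}H^\cdot_\sM(\cA,*)^{(r)}$, compatibly with $\tr_{[a]}$. On the right-hand side $\tr_{[a]}$ acts with eigenvalues $a^1,\dots,a^{2g}$, none of which equals $a^0=1$, so $\tr_{[a]}-\id$ is invertible there and the generalized $1$-eigenspace vanishes; that is, $H^\cdot_\sM(\cA\backslash\epsilon(S),*)^{(0)}=0$.

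Most of this is bookkeeping within the Bloch--Ogus formalism: purity, functoriality of Gysin maps, the identification of the forgetful map with a Gysin map, and the $\tr_{[a]}$-compatibility supplied by Lemma~\ref{tr-and-loc-seq} (including checking that its hypotheses hold for $W=\cA$, $Z=\epsilon(S)$). The one step I expect to require genuine care is the identification $H^\cdot_\sM(\cA,*)^{(0)}=\epsilon_*\big(H^{\cdot-2g}_\sM(S,*-g)\big)$: one must match the explicit description of the extremal Deninger--Murre projector $\pi_{2g}$ as the class of $\cA\times_S\epsilon(S)$ with its realisation as the operator $\epsilon_*\circ\pi_*$, and check that this recovers the same weight-$0$ subspace produced by the first part of the proposition. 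Granting this, what remains is a short diagram chase together with the elementary observation that an operator with no eigenvalue $1$ has trivial generalized $1$-eigenspace.
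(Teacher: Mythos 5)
Your proof is correct and follows essentially the same route as the paper: the first assertion via the Deninger--Murre projectors $\pi_i$ and the relation $[a]_*\pi_i=a^{2g-i}\pi_i$, and the vanishing via the identification $H^\cdot_\sM({\mathcal A},*)^{(0)}=\pi_{2g}H^\cdot_\sM({\mathcal A},*)\isom\epsilon_*H^{\cdot-2g}_\sM(S,*-g)$ combined with the $\tr_{[a]}$-equivariance of the localization sequence supplied by Lemma \ref{tr-and-loc-seq}. You merely spell out two points the paper leaves implicit (that the extremal projector acts as $\epsilon_*\circ\pi_*$, and that injectivity of $\epsilon_*$ breaks the long exact sequence into short exact sequences), which is a harmless refinement rather than a different argument.
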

\begin{proof}
The first statement follows from the decomposition
$$
H^\cdot_\sM({\mathcal A},*)\isom \bigoplus_{i=0}^{2g}\pi_i H^\cdot_\sM({\mathcal A},*),
$$
which is independent of $a$. The second statement follows 
from the fact that
$\pi_{2g}H^\cdot_\sM({\mathcal A},*)\isom \epsilon_*H^{\cdot-2g}_\sM(S,*-g)$ and the equivariance of
the localization sequence
$$
\cdots\to H^{\cdot-2g}_\sM(S,*-g)\xrightarrow{\epsilon_*}
H^\cdot_\sM({\mathcal A},*)\to H^\cdot_\sM({\mathcal A}\backslash \epsilon(S),*)\to \cdots.
$$
\end{proof}
The following simple corollary is basic for everything that follows:
\begin{corollary}
Let $N\ge 2$ and ${\mathcal A}[N]\subset {\mathcal A}$ be the sub-scheme of $N$-torsion points,
then 
$$
H^{2g-1}_\sM({\mathcal A}\backslash{\mathcal A}[N],g)^{(0)}\isom
H^0_\sM({\mathcal A}[N]\backslash\epsilon(S),0)^{(0)}.
$$
\label{corfund}
\end{corollary}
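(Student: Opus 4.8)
The plan is to combine the equivariant localization sequence (Lemma \ref{tr-and-loc-seq}) for the pair $\epsilon(S)\subset {\mathcal A}\setminus{\mathcal A}[N]$, or rather for the closed subscheme ${\mathcal A}[N]\setminus\epsilon(S)\subset {\mathcal A}\setminus\epsilon(S)$, with the vanishing statement $H^\cdot_\sM({\mathcal A}\setminus\epsilon(S),*)^{(0)}=0$ from Proposition \ref{decomp-prop}. First I would set $W={\mathcal A}\setminus\epsilon(S)$ and $Z={\mathcal A}[N]\setminus\epsilon(S)$, which is closed in $W$; the complement $W\setminus Z$ is exactly ${\mathcal A}\setminus{\mathcal A}[N]$. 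Since $[a]$ is prime to the characteristic and one may take $a$ coprime to $N$ (the eigenspace decomposition of Proposition \ref{decomp-prop} being independent of $a$), the preimage $[a]^{-1}W$ sits inside $W$ and $[a]^{-1}Z=\wt Z$ satisfies $[a](\wt Z)=Z$ because $[a]$ maps $N$-torsion to $N$-torsion surjectively and fixes $\epsilon(S)$. Hence Lemma \ref{tr-and-loc-seq} applies and the localization sequence
$$
\cdots\to H^\cdot_\sM(Z,*)\to H^\cdot_\sM(W,*)\to H^\cdot_\sM(W\setminus Z,*)\to H^{\cdot+1}_\sM(Z,*)\to\cdots
$$
is $\tr_{[a]}$-equivariant, so it remains exact after passing to generalized $a^{0}=1$-eigenspaces (a generalized eigenspace is an exact functor on the category of modules over the polynomial ring generated by $\tr_{[a]}$, as it is a direct summand cut out by an idempotent).

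Next I would feed in the input from Proposition \ref{decomp-prop}: since $W={\mathcal A}\setminus\epsilon(S)$, we have $H^\cdot_\sM(W,*)^{(0)}=0$ in every degree. Therefore the boundary maps
$$
H^{2g-1}_\sM(W\setminus Z,*)^{(0)}\xrightarrow{\ \partial\ } H^{2g}_{\sM}(Z,*)^{(0)}
$$
sandwiched by the vanishing groups $H^{2g-1}_\sM(W,*)^{(0)}=0$ and $H^{2g}_\sM(W,*)^{(0)}=0$ become isomorphisms. It then remains to identify $Z={\mathcal A}[N]\setminus\epsilon(S)$ with the relevant group. Since $Z$ is finite étale over $S$ of relative dimension $0$, absolute purity (or the Bloch–Ogus duality already invoked in the proof of Lemma \ref{tr-and-loc-seq}) identifies $H^{2g}_{Z,\sM}({\mathcal A}\setminus\epsilon(S),g)$ — and after the localization-sequence shift, the term $H^{2g}$ indexed by $Z$ — with $H^0_\sM(Z,0)$; this uses that $Z\hookrightarrow {\mathcal A}$ has codimension $g$. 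Matching the twists and degrees gives the claimed isomorphism $H^{2g-1}_\sM({\mathcal A}\setminus{\mathcal A}[N],g)^{(0)}\isom H^0_\sM({\mathcal A}[N]\setminus\epsilon(S),0)^{(0)}$.

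The main obstacle I anticipate is the bookkeeping in the last step: one must be careful that the purity isomorphism $H^{i}_{Z}({\mathcal A}\setminus\epsilon(S),j)\cong H^{i-2g}_\sM(Z,j-g)$ is itself $\tr_{[a]}$-equivariant (this is exactly the content of the lower square in the diagram of Lemma \ref{tr-and-loc-seq}, applied with this $Z$ and $W$), and that the cohomological degree $2g-1$ together with weight $g$ is precisely what lands, under $\partial$ and purity, in $H^0$ with weight $0$ — the degree $2g$ group for $Z$ being $H^0_\sM(Z,0)$ placed in the sequence one step to the right of $H^{2g-1}$ of the open part. Everything else is a formal diagram chase once the equivariance of localization (Lemma \ref{tr-and-loc-seq}) and the vanishing (Proposition \ref{decomp-prop}) are in hand; no further geometric input is needed beyond the fact that $[a]$ stabilises the stratification ${\mathcal A}\supset{\mathcal A}[N]\supset\epsilon(S)$ for $a$ prime to $N$.
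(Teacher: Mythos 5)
Your argument is essentially the paper's own proof: the paper deduces Corollary \ref{corfund} from exactly the localization sequence for $Z={\mathcal A}[N]\backslash\epsilon(S)$ inside $W={\mathcal A}\backslash\epsilon(S)$ (with the purity identification of the $Z$-term as $H^0_\sM({\mathcal A}[N]\backslash\epsilon(S),0)$ already built in), the vanishing $H^\cdot_\sM({\mathcal A}\backslash\epsilon(S),*)^{(0)}=0$ of Proposition \ref{decomp-prop}, and the $\tr_{[a]}$-equivariance of Lemma \ref{tr-and-loc-seq}. Your extra bookkeeping about degrees, twists and equivariance of the purity isomorphism is exactly what the paper leaves implicit, so the two proofs coincide in substance.
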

\begin{proof}
This is a direct consequence of Proposition \ref{decomp-prop},
the localization sequence
$$
H^{2g-1}_\sM({\mathcal A}\backslash\epsilon(S),g)\to H^{2g-1}_\sM({\mathcal A}\backslash {\mathcal A}[N],g)
\to H^0_\sM({\mathcal A}[N]\backslash\epsilon(S),0)\to H^{2g}_\sM({\mathcal A}\backslash\epsilon(S),g)
$$
and Lemma \ref{tr-and-loc-seq}.
\end{proof}
Note that $H^0_\sM({\mathcal A}[N]\backslash\epsilon(S),0)^{(0)}$ is not zero.
It contains the fundamental class of $ {\mathcal A}[N]\backslash\epsilon(S)$.

%%%%%%%%%%%%%%%%%%%%%%%%%%%%%%%%%%%%%%%%%%%%%%%%%%%%%%%%%%%%%%%%%%%%%%
%
%
%
\section{The polylogarithm on abelian schemes}
%
%
%
%%%%%%%%%%%%%%%%%%%%%%%%%%%%%%%%%%%%%%%%%%%%%%%%%%%%%%%%%%%%%%%%%%%%%%

%%%%%%%%%%%%%%%%%%%%%%%%%%%%%%%%%%%%%%%%%%%%%%%%%%%%%%%%%%%%%%%%%%%%%%
%
\subsection{A motivation from topology}
%
%%%%%%%%%%%%%%%%%%%%%%%%%%%%%%%%%%%%%%%%%%%%%%%%%%%%%%%%%%%%%%%%%%%%%%
In this section we explain the topological polylogarithm. For
more details and applications consult \cite{BKL}. 

Let $X:=\bbC^g/\Lambda$ a complex torus and consider the group ring
$\bbZ[\Gamma]$ with its standard action of $\Gamma$ by
$\gamma(\gamma'):=(\gamma+\gamma')$. Let $I$ be the augmentation ideal for
$\bbZ[\Gamma]\to\bbZ$, $(\gamma)\mapsto 1$. Define
$$
\bbZ[[\Gamma]]:=\prolim_n\bbZ[\Gamma]/I^{n+1}
$$
and observe that $I^n/I^{n+1}\isom \Sym^n\Gamma$. In fact, choosing
a basis for $\Gamma$, one has 
$\bbZ[\Gamma]\isom \bbZ[t_1,t_1^{-1},\ldots,t_{2g},t_{2g}^{-1}]$ and
$I$ is the ideal $(t_1-1,\ldots,t_{2g}-1)$. The action of
$\Gamma$ on $\bbZ[\Gamma]$ extends to $\bbZ[[\Gamma]]$ and we
denote by 
$$
\sLog_\bbZ
$$
the sheaf on $X$ associated with the $\Gamma$-module $\bbZ[[\Gamma]]$.
We also write 
$$
\sLog_\bbZ^{(n)}
$$
for the sheaf associated with the $\Gamma$-module $\bbZ[[\Gamma]]/\wh I^{n+1}$,
where $\wh I$ is the augmentation ideal of $\bbZ[[\Gamma]]$. One gets
that $\sLog_\bbZ\isom \prolim_n\sLog_\bbZ^{(n)}$.
Another description of $\sLog_\bbZ$ is as follows: Let $p:\bbC^g\to X$
be the universal covering and consider $p_!\bbZ$ on $X$. This sheaf is a
$\pi^*\bbZ[\Gamma]$-module, where $\pi:X\to \pt$ is the structure map
of $X$. Then
\begin{equation}
\sLog_\bbZ\isom p_!\bbZ\otimes_{ \pi^*\bbZ[\Gamma]}\pi^*\bbZ[[\Gamma]].
\end{equation}
From this description, one sees without any effort that
\begin{equation}
R^i\pi_*\sLog_\bbZ\isom \begin{cases}
0&\mbox{ if }i\neq 2g\\
\bbZ&\mbox{ if }i=2g,
\end{cases}
\end{equation}
because 
$$
R^i\pi_*\sLog_\bbZ\isom 
H^i_c(X,p_!\bbZ)\otimes_{ \pi^*\bbZ[\Gamma]}\pi^*\bbZ[[\Gamma]]\isom 
H^i_c(\bbC^g,\bbZ)\otimes_{ \pi^*\bbZ[\Gamma]}\pi^*\bbZ[[\Gamma]]
$$
as $\bbZ[[\Gamma]]$ is a flat $\bbZ[\Gamma]$-module. Let $\epsilon:\pt\to X$ 
be the zero section, then the exact triangle
$$
\epsilon_*\epsilon^!\sLog_\bbZ\to\sLog_\bbZ\to Rj_*j^*\sLog_\bbZ
$$
where $j:X\backslash 0\to X$ induces
$$
\Ext^{2g-1}_{X}(\pi^*\Gamma, \sLog_\bbZ)\to 
\Ext^{2g-1}_{X\backslash 0}(\pi^*\Gamma, \sLog_\bbZ)\to
\Hom(\Gamma,I)\to \Ext^{2g}_{X}(\pi^*\Gamma, \sLog_\bbZ)
$$
where the $\Ext$-groups are extension classes of local systems and
$\pi^*\Gamma$ is considered as trivial local system. From the above
cohomology computation it follows that $\Ext^{2g-1}_{X}(\pi^*\Gamma, \sLog_\bbZ)=0$
and that $\Ext^{2g}_{X}(\pi^*\Gamma, \sLog_\bbZ)\isom \Hom(\Gamma,\bbZ)$.
We get 
$$
0\to 
\Ext^{2g-1}_{X\backslash 0}(\pi^*\Gamma, \sLog_\bbZ)\to
\Hom(\Gamma,\bbZ[[\Gamma]])\to \Hom(\Gamma,\bbZ)
$$
and the last map is easily seen to be induced by the augmentation of
$\bbZ[[\Gamma]]$. Thus, we have an isomorphism
\begin{equation}
\Ext^{2g-1}_{X\backslash 0}(\pi^*\Gamma, \sLog_\bbZ)\isom 
\Hom(\Gamma,\wh I).
\end{equation}
\begin{definition}
The \emph{(topological) polylogarithm} is the class 
$$
\pol\in \Ext^{2g-1}_{X\backslash 0}(\pi^*\Gamma, \sLog_\bbZ)
$$
that maps to the canonical inclusion $\Gamma\subset \wh I$ under
the above isomorphism.
\end{definition}

%%%%%%%%%%%%%%%%%%%%%%%%%%%%%%%%%%%%%%%%%%%%%%%%%%%%%%%%%%%%%%%%%%%%%%
%
\subsection{Review of the logarithm sheaf}
%
%%%%%%%%%%%%%%%%%%%%%%%%%%%%%%%%%%%%%%%%%%%%%%%%%%%%%%%%%%%%%%%%%%%%%%
\label{log-sheaf-section}
Inspired by the above topological construction, one can define
a logarithm sheaf in any reasonable sheaf theory, most notably for
\'etale sheaves or Hodge-modules. This construction was first
carried out in \cite{w} and generalizes the case of elliptic curves
from \cite{BeLe}. The construction is very formal
and does not need specific properties of the respective sheaf theory.

As in our main reference \cite{Kings99}, we want to define the
logarithm sheaf at the same time for $\ell$-adic sheaves and for
Hodge-modules over $\bbR$. In the Hodge-module setting we let $F=\bbR$
and all varieties are considered over $\bbR$. Lisse sheaves are the
ones where the underlying perverse sheaf is a local system placed in
degree $[-\mbox{dimension of the variety}]$. We will consider these
sheaves as sitting in degree $0$ to have an easier comparison with the
\'etale situation. In the $\ell$-adic setting we let $F=\bbQ_\ell$
and a lisse sheaf is associated with a continuous representation of the
\'etale fundamental group. 

\begin{definition}
Let $\sH:=(R^1\pi_*F)^\vee=\Hom_S(R^1\pi_*F,F)$ be the dual of the
first relative cohomology of $\pi:{\mathcal A}\to S$.
\end{definition}
The Leray spectral sequence induces a short exact sequence
$$
0\to \Ext^1_S(F,\sH)\xrightarrow{\pi^*}\Ext^1_{\mathcal A}(F,\pi^*\sH)\to 
\Hom_S(F,R^1\pi_*F\otimes \sH)\to 0,
$$
which is exact and split because $\pi$ has the section $\epsilon:S\to {\mathcal A}$.
Note that 
$$
\Hom_S(F,R^1\pi_*F\otimes \sH)\isom \Hom_S(\sH,\sH).
$$
\begin{definition}
Let $\sLog^{(1)}\in \Ext^1_{\mathcal A}(F,\pi^*\sH)$ be the unique class, which maps
to $\id_\sH\in \Hom_S(\sH,\sH)$ and such that $\epsilon^*\sLog^{(1)}$ splits. 
We write also $\sLog^{(1)}$ for the lisse sheaf representing this class. 
\end{definition}
By definition $\sLog^{(1)}$ sits in an exact sequence
$$
0\to \pi^*\sH\to \sLog^{(1)}\to F\to 0.
$$
We define
$$
\sLog^{(n)}:=\Sym^n\sLog^{(1)}
$$
so that we have morphisms $\sLog^{(n)}\to \sLog^{(n-1)}$ induced
by $\sLog^{(1)}\to F$. We write $\sLog$ for the projective system
$(\sLog^{(n)})_n$. 
Note that $\sLog$ is a pro-unipotent sheaf, which is a successive extension
of $\Sym^k\sH$.
 
Let $\psi:{\mathcal A}\to \cB$ be an isogeny. Then $\psi$ induces an isomorphism
$\sH_{\mathcal A}\isom \psi^*\sH_\cB$ and hence an isomorphism 
$\sLog_{\mathcal A}^{(1)}\isom \psi^*\sLog^{(1)}_\cB$ or more generally
\begin{equation}\label{log-splitting}
\sLog_{\mathcal A}\isom \psi^*\sLog_\cB.
\end{equation}
For $t\in\ker\psi(S)$, we get $t^*\sLog_{\mathcal A}\isom \epsilon_{\mathcal A}^*\sLog_\cB$,
which induces an isomorphism
\[
t^*\sLog_{\mathcal A}\isom \epsilon_{\mathcal A}^*\sLog_{\mathcal A}\isom \prod_{n\ge 0}\Sym^n\sH_{\mathcal A}.
\]
As in the topological case one can compute
the cohomology of $\sLog$.
\begin{proposition}[\cite{Kings99}, Proposition 1.1.3]\label{log-coh-comp} For the higher direct images of $\sLog^{(n)}$ one has
$$
R^{2g}\pi_*\sLog^{(n)}\isom R^{2g}\pi_*\sLog^{(n-1)}\isom \cdots\isom R^{2g}\pi_*F
\isom F(-g)
$$
and for $i<2g$ the maps $\sLog^{(n)}\to \sLog^{(n-1)}$ induce the zero map
$$
R^{i}\pi_*\sLog^{(n)}\isom R^{i}\pi_*\sLog^{(n-1)}
$$
for all $n$. 
\end{proposition}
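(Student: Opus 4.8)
The plan is to reduce the statement to an explicit Koszul-complex computation, using the filtration of $\sLog^{(n)}$ by symmetric powers of $\sH$. I would work with rational (or $\ell$-adic) coefficients throughout, so that all the symmetric-power sequences below are exact. From $0\to\pi^*\sH\to\sLog^{(1)}\to F\to 0$, together with $\sLog^{(n)}=\Sym^n\sLog^{(1)}$ and the fact that $F$ is invertible, one obtains a canonical decreasing filtration $W^\bullet$ on $\sLog^{(n)}$ with $\Gr^p_W\sLog^{(n)}\isom\pi^*\Sym^p\sH$ for $0\le p\le n$; in particular $0\to\pi^*\Sym^n\sH\to\sLog^{(n)}\to\sLog^{(n-1)}\to 0$, and the transition morphism is strictly compatible with $W^\bullet$, identifying $\sLog^{(n-1)}=\sLog^{(n)}/W^n$. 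I would then run the spectral sequence of the filtered object $(R\pi_*\sLog^{(n)},W^\bullet)$. By the projection formula and the well-known description $R^i\pi_*F\isom\Lambda^i\sH^\vee$ of the cohomology of an abelian scheme, $E_1^{p,q}\isom\Sym^p\sH\otimes\Lambda^{p+q}\sH^\vee$, and the crucial point is that $d_1\colon E_1^{p,q}\to E_1^{p+1,q}$ is, up to sign, the Koszul differential $s\otimes\omega\mapsto\sum_k(v_ks)\otimes(v_k^\vee\wedge\omega)$ for $\{v_k\}$ a local basis of $\sH$ and $\{v_k^\vee\}$ the dual basis of $\sH^\vee$. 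This rests on identifying the relevant connecting maps with cup products against the extension classes of the quotients $W^p/W^{p+2}$, which by the defining property of $\sLog^{(1)}$ (its class maps to $\id_\sH$) are built out of $\id_\sH$.

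Next comes the Koszul computation. The $E_1$-page assembles into the cochain Koszul complex $\Sym^\bullet\sH\otimes\Lambda^\bullet\sH^\vee$ attached to a basis of $\sH$ inside the polynomial ring $\Sym^\bullet\sH$, a regular sequence; hence its total cohomology is concentrated in top degree $2g=\dim\sH$ and equals $\Lambda^{2g}\sH^\vee\isom F(-g)$. This complex splits according to $q:=(\text{exterior degree})-(\text{symmetric degree})$: writing $K^\bullet_q$ for the summand with $K^p_q=\Sym^p\sH\otimes\Lambda^{p+q}\sH^\vee$, one finds that $K^\bullet_q$ is exact for every $q<2g$, while $K^\bullet_{2g}=[\Lambda^{2g}\sH^\vee]$ is concentrated in a single degree and carries the class $F(-g)$. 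For $\sLog^{(n)}$ the $q$-row of the $E_1$-page is precisely $K^\bullet_q$ truncated to cohomological degrees $\le n$.

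The conclusion is then bookkeeping. Since $K^\bullet_q$ ($q<2g$) is exact, its truncation to degrees $\le n$ has cohomology only in degree $n$, so $E_2^{p,q}=0$ for $p<n$, $q<2g$, whereas $E_2^{n,q}\isom\im(K^n_q\to K^{n+1}_q)$, and $E_2^{0,2g}\isom F(-g)$; moreover the a priori term $E_2^{n,2g-n}$ vanishes because for $n\ge 1$ the complex $K^\bullet_{2g-n}$ is exact. A bidegree count shows all higher differentials vanish, so $E_2=E_\infty$. Hence for $i=2g$ the only non-zero graded piece of $R^{2g}\pi_*\sLog^{(n)}$ is $E_\infty^{0,2g}\isom F(-g)$, living in filtration level $0$, on which $\sLog^{(n)}\to\sLog^{(n-1)}$ induces the identity; iterating gives the chain of isomorphisms down to $R^{2g}\pi_*F\isom F(-g)$. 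For $i<2g$ the only non-zero graded piece of $R^i\pi_*\sLog^{(n)}$ is $E_\infty^{n,i-n}$, sitting in the deepest filtration level $p=n$, that is, in the image of $R^i\pi_*W^n=R^i\pi_*(\pi^*\Sym^n\sH)$; since $W^n=\ker(\sLog^{(n)}\to\sLog^{(n-1)})$, the composite $R^i\pi_*W^n\to R^i\pi_*\sLog^{(n)}\to R^i\pi_*\sLog^{(n-1)}$ is zero, and the first arrow being surjective forces the transition map $R^i\pi_*\sLog^{(n)}\to R^i\pi_*\sLog^{(n-1)}$ to vanish.

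The step I expect to be the real obstacle is the identification of $d_1$ with the Koszul differential: one must pin down the extension classes of the quotients $W^p/W^{p+2}$, track the cup-product pairings computing the connecting maps, and carefully account for the multiplication maps $\Sym^p\sH\otimes\sH\to\Sym^{p+1}\sH$ and for the one place where $\id_\sH$ enters; a secondary, routine point is to check that the symmetric-power sequences are exact (work rationally). As a sanity check one can instead invoke that $R\pi_*$ of a lisse sheaf under a proper smooth morphism commutes with base change, reduce to a geometric fibre and then to $S=\Spec\bbC$, where $\sLog^{(n)}$ becomes an honest local system on a complex torus and the statement can be read off from the description $\sLog_\bbZ\isom p_!\bbZ\otimes_{\pi^*\bbZ[\Gamma]}\pi^*\bbZ[[\Gamma]]$ recalled above — but this still amounts to the same Koszul computation.
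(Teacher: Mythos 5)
Your argument is correct, but it is worth noting that the paper does not actually prove this statement: it quotes it from \cite{Kings99}, Proposition 1.1.3, and only sketches the topological analogue (the computation of $R\pi_*\sLog_\bbZ$ via flatness of $\bbZ[[\Gamma]]$ over $\bbZ[\Gamma]$), which by itself does not give the finite-level assertion about $\sLog^{(n)}$ and the vanishing of the transition maps. Your route -- the weight filtration on $\Sym^n\sLog^{(1)}$ with graded pieces $\pi^*\Sym^p\sH$, the associated spectral sequence with $E_1^{p,q}\isom\Sym^p\sH\otimes\Lambda^{p+q}\sH^\vee$, and the identification of the rows with (truncations of) strands of the Koszul complex -- is the standard sheaf-theoretic proof and is sound: the degeneration bookkeeping, the identification $E_\infty^{0,2g}\isom F(-g)$, and the observation that for $i<2g$ everything sits in filtration step $n$, i.e.\ comes from $R^i\pi_*(\ker(\sLog^{(n)}\to\sLog^{(n-1)}))$, so the transition map dies, are all correct. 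The one step you rightly single out, the identification of $d_1$ with the Koszul differential, does go through, and the point to make explicit is the following: the extension class of $W^p/W^{p+2}$ lies in $\Ext^1_{\cA}(\pi^*\Sym^p\sH,\pi^*\Sym^{p+1}\sH)$, which has both a ``base'' component in $\Ext^1_S$ and a ``fibre'' component in $\Hom_S(\Sym^p\sH,\Sym^{p+1}\sH\otimes R^1\pi_*F)$; the base component contributes nothing to the connecting sheaf map, because an extension pulled back from $S$ stays exact after tensoring the corresponding sequence on $S$ with the lisse sheaves $\Lambda^j\sH^\vee$ (projection formula), so only the image of $\id_\sH$ under multiplication enters, and up to nonzero scalars (harmless in characteristic $0$) this is the Koszul differential. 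With that caveat spelled out, your proof is complete and agrees in spirit with the group-cohomology/Koszul computation underlying the cited result.
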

Let us define 
$$
H^j({\mathcal A},\sLog(g)):=\prolim_nH^j({\mathcal A},\sLog^{(n)}(g)),
$$
then the Proposition \ref{log-coh-comp} 
and the Leray spectral sequence for $R\pi_*$ imply that
\begin{equation}\label{log-coh-consequence}
H^j({\mathcal A},\sLog(g))\isom \begin{cases}
0&\mbox{ if }j<2g\\
H^0(S,F)&\mbox{ if }j=2g.
\end{cases} 
\end{equation}

%%%%%%%%%%%%%%%%%%%%%%%%%%%%%%%%%%%%%%%%%%%%%%%%%%%%%%%%%%%%%%%%%%%%%%
%
\subsection{The polylogarithm}
%
%%%%%%%%%%%%%%%%%%%%%%%%%%%%%%%%%%%%%%%%%%%%%%%%%%%%%%%%%%%%%%%%%%%%%%
Consider the exact triangle for the open immersion
$j:{\mathcal A}\backslash{\mathcal A}[N]\hookrightarrow {\mathcal A}$
and $\iota:{\mathcal A}[N]\hookrightarrow {\mathcal A}$
$$
\iota_*\iota^!\sLog\to\sLog\to Rj_*j^*\sLog,
$$
then we get a localization sequence
$$
H^{2g-1}({\mathcal A},\sLog(g))\to H^{2g-1}({\mathcal A}\backslash {\mathcal A}[N],\sLog(g))\to
H^{0}({\mathcal A}[N],\iota^*\sLog)\to H^{2g}({\mathcal A},\sLog(g)).
$$
\begin{corollary}\label{pol-ext}
Let $\sLog[{\mathcal A}[N]]^0:=\ker(\iota_*\iota^*\sLog\to F)$ be the kernel of the map
induced by the augmentation $\sLog\to F$. Then the localization sequence
induces an isomorphism
$$
H^{2g-1}({\mathcal A}\backslash {\mathcal A}[N],\sLog(g))\isom 
H^{0}(S,\sLog[A[N]]^{0}).
$$
\end{corollary}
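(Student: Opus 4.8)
The plan is to combine the localization sequence displayed just above the statement with the cohomology computation (\ref{log-coh-consequence}), reducing the whole claim to an identification of the last map of that sequence with a morphism induced by the augmentation $\sLog\to F$.

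Since $S$ lies over a field of characteristic $0$, the subscheme $\iota\colon\cA[N]\hookrightarrow\cA$ is finite \'etale over $S$, hence a regular closed immersion of codimension $g$; purity gives $\iota^!\sLog(g)\isom\iota^*\sLog[-2g]$, so that $H^j(\cA,\iota_*\iota^!\sLog(g))\isom H^{j-2g}(\cA[N],\iota^*\sLog)$, which vanishes for $j<2g$ and equals $H^0(\cA[N],\iota^*\sLog)$ for $j=2g$. Feeding the triangle $\iota_*\iota^!\sLog\to\sLog\to Rj_*j^*\sLog$ into $H^\cdot(\cA,-(g))$ and invoking (\ref{log-coh-consequence}), namely $H^{2g-1}(\cA,\sLog(g))=0$ and $H^{2g}(\cA,\sLog(g))\isom H^0(S,F)$, the localization sequence collapses to the left exact sequence
$$
0\to H^{2g-1}(\cA\backslash\cA[N],\sLog(g))\to H^0(\cA[N],\iota^*\sLog)\xrightarrow{\ \delta\ }H^0(S,F),
$$
so the residue map identifies $H^{2g-1}(\cA\backslash\cA[N],\sLog(g))$ with $\ker\delta$, where $\delta$ is the Gysin pushforward along $\iota$ composed with the normalisation $H^{2g}(\cA,\sLog(g))\isom H^0(S,F)$.

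The heart of the proof is then to show $\delta=\tr\circ\mathrm{aug}$, with $\mathrm{aug}\colon H^0(\cA[N],\iota^*\sLog)\to H^0(\cA[N],\iota^*F)$ induced by the augmentation and $\tr\colon H^0(\cA[N],\iota^*F)\to H^0(S,F)$ the trace of the finite \'etale cover $p\colon\cA[N]\to S$. Applying the augmentation $\sLog\to F$ to the localization triangle produces a morphism of triangles, hence a commutative square comparing $\delta$ with the connecting map for the constant sheaf, which is the Gysin map $\iota_*\colon H^0(\cA[N],\iota^*F)\to H^{2g}(\cA,F(g))$, the left vertical arrow being $\mathrm{aug}$. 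By Proposition \ref{log-coh-comp} the isomorphism $H^{2g}(\cA,\sLog(g))\isom H^0(S,F)$ is itself induced by the augmentation via the top-degree edge morphism of the Leray spectral sequence, i.e.\ the proper pushforward $\pi_*$; since $\pi\circ\iota=p$ this forces $\pi_*\circ\iota_*=(\pi\iota)_*=p_*$, and $p_*$ on degree-$0$ cohomology of the constant sheaf is the trace. Hence $\delta=\tr\circ\mathrm{aug}$, which is $H^0(S,-)$ applied to the morphism of sheaves on $S$ given by $p_*\iota^*\sLog\xrightarrow{\mathrm{aug}}p_*\iota^*F\xrightarrow{\tr}F$; by definition the kernel of this morphism is $\sLog[\cA[N]]^0$. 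As $H^0(S,-)$ is left exact, $\ker\delta=H^0(S,\sLog[\cA[N]]^0)$, which with the first step yields the asserted isomorphism.

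I expect the only genuinely delicate point to be this identification of $\delta$: it rests on the compatibility of the purity/Gysin triangle with the augmentation and on the convention that $H^{2g}(\cA,\sLog(g))\isom H^0(S,F)$ is the one coming from the augmentation together with integration along the fibres of $\pi$. Everything else is formal; in particular the pro-sheaf bookkeeping (arguing at finite level $\sLog^{(n)}$ and passing to $\varprojlim_n$, the relevant $\varprojlim^1$-terms vanishing as in \cite{Kings99}) can be suppressed, since the localization sequence for $\sLog$ is already available.
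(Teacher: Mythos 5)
Your proposal is correct and follows essentially the same route as the paper: plug the vanishing of $H^{2g-1}({\mathcal A},\sLog(g))$ and the identification $H^{2g}({\mathcal A},\sLog(g))\isom H^0(S,F)$ from \eqref{log-coh-consequence} into the localization sequence and identify the resulting boundary map with the augmentation (composed with the trace), so that its kernel is $H^0(S,\sLog[{\mathcal A}[N]]^0)$. The only difference is that you spell out, via purity and the Leray edge map, the identification of that last map, which the paper simply asserts in one line.
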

\begin{proof}
From Equation \eqref{log-coh-consequence} we get that the localization sequence
gives 
$$
0\to H^{2g-1}({\mathcal A}\backslash {\mathcal A}[N],\sLog(g))\to
H^{0}({\mathcal A}[N],\iota^*\sLog)\to H^{0}(S,F)
$$
and the last map is induced by the augmentation $\sLog\to F$.
\end{proof}
Denote by $H^0({\mathcal A}[N],F)^0$ the kernel of the trace map
$$
H^0({\mathcal A}[N],F)^0:=\ker(H^0({\mathcal A}[N],F)\to H^0(S,F)).
$$
By Equation \eqref{log-splitting} we have an inclusion
$$
H^0({\mathcal A}[N],F)^0\subset  H^0(S,\sLog[{\mathcal A}[N]]^0).
$$
It is convenient to identify
\begin{equation}
H^0({\mathcal A}[N],F)^0\isom H^0({\mathcal A}[N]\backslash \epsilon(S),F)
\end{equation}
via the restriction map. 
\begin{definition}\label{pol-def}
For each $\varphi\in  H^0({\mathcal A}[N]\backslash \epsilon(S),F)$ we let 
$$
\pol_\varphi\in H^{2g-1}({\mathcal A}\backslash {\mathcal A}[N],\sLog(g))
$$
be the class, which maps to $\varphi$ under the isomorphism 
in Corollary \ref{pol-ext}. We let 
$$
\pol_\varphi^{n}\in H^{2g-1}({\mathcal A}\backslash {\mathcal A}[N],\sLog^{(n)}(g))
$$
be the image under the canonical map $\sLog\to \sLog^{(n)}$.
\end{definition}
If we want to specify the
theory of sheaves we working with, we write
$$
\pol_{\cH,\varphi}\in H^{2g-1}_\cH({\mathcal A}\backslash {\mathcal A}[N],\sLog(g))
$$
for the absolute Hodge and 
$$
\pol_{\et,\varphi}\in H^{2g-1}_{\et}({\mathcal A}\backslash {\mathcal A}[N],\sLog(g))
$$
for the $\ell$-adic polylogarithm.

%%%%%%%%%%%%%%%%%%%%%%%%%%%%%%%%%%%%%%%%%%%%%%%%%%%%%%%%%%%%%%%%%%%%%%
%
\subsection{Norm compatibility of the polylogarithm}
%
%%%%%%%%%%%%%%%%%%%%%%%%%%%%%%%%%%%%%%%%%%%%%%%%%%%%%%%%%%%%%%%%%%%%%%
Let $a\ge 2$ be an integer and consider the $[a]$-multiplication 
$[a]:{\mathcal A}\to{\mathcal A}$. We define an endomorphism $\tr_{[a]}$ of
$H^{2g-1}({\mathcal A}\backslash {\mathcal A}[N],\sLog(g))$ as follows:
\begin{multline*}
\tr_{[a]}:H^{2g-1}({\mathcal A}\backslash {\mathcal A}[N],\sLog(g))\to 
H^{2g-1}([a]^{-1}({\mathcal A}\backslash {\mathcal A}[N]),\sLog(g))\to \\
\to 
H^{2g-1}([a]^{-1}({\mathcal A}\backslash {\mathcal A}[N]),[a]^*\sLog(g))\xrightarrow{\trace_{[a]}}
H^{2g-1}({\mathcal A}\backslash {\mathcal A}[N],\sLog(g)),
\end{multline*}
where $\trace_{[a]}$ is the trace map relative to the finite morphism $[a]$.
\begin{proposition}\label{pol-norm-comp}
Suppose that $(a,N)=1$ and let $[a]_*\varphi$ be the image of
$\varphi\in  H^0({\mathcal A}[N]\backslash \epsilon(S),F)$ under the finite map $[a]$.
Then one has
$$
\tr_{[a]}\pol_\varphi=\pol_{[a]\varphi}.
$$
In particular, for $a\equiv 1\mod{N}$ one gets 
$\tr_{[a]}\pol_\varphi=\pol_{\varphi}$ and $\pol_\varphi$ is norm-compatible.
\end{proposition}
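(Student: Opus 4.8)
The plan is to reduce the statement to the residue isomorphism of Corollary~\ref{pol-ext}. Write $\res$ for the residue isomorphism $H^{2g-1}(\cA\backslash\cA[N],\sLog(g))\isom H^0(S,\sLog[\cA[N]]^0)$ of that corollary, so that by Definition~\ref{pol-def} one has $\res(\pol_\varphi)=\varphi$ for every $\varphi\in H^0(\cA[N]\backslash\epsilon(S),F)$. Because $[a]_*$ maps the subgroup $H^0(\cA[N]\backslash\epsilon(S),F)$ into itself, it is enough to establish the identity $\res(\tr_{[a]}\pol_\varphi)=[a]_*\varphi$ and then invoke the injectivity of $\res$; the assertion for $a\equiv 1\bmod N$ will then be immediate, since in that case $[a]$ acts as the identity on $\cA[N]$.

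First I would record the geometry of the torsion. As $(a,N)=1$ and $a$ is prime to the characteristic, the Chinese Remainder Theorem gives an isomorphism $\cA[aN]\isom\cA[a]\times_S\cA[N]$ under which $[a]\colon\cA[aN]\to\cA[N]$ corresponds to the second projection followed by the automorphism $[a]\colon\cA[N]\to\cA[N]$. Since $\cA[a]$ is finite \'etale over the connected base $S$, its zero section is open and closed in $\cA[a]$; hence $\cA[N]$ is open and closed in $\cA[aN]$, with complement $C:=\cA[aN]\backslash\cA[N]$ closed in $\cA$ and disjoint from $\cA[N]$. Moreover $[a]^{-1}(\cA[N])=\cA[aN]$ and $[a]^{-1}(\cA\backslash\cA[N])=\cA\backslash\cA[aN]$. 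Finally, recall that by construction (Definition~\ref{deftra}, applied with $\sLog$-coefficients by means of the isogeny isomorphism $[a]^*\sLog\isom\sLog$ of \eqref{log-splitting}) the operator $\tr_{[a]}$ on $H^{2g-1}(\cA\backslash\cA[N],\sLog(g))$ is restriction to $\cA\backslash\cA[aN]$ followed by the trace $\trace_{[a]}$ along the finite \'etale morphism $[a]\colon\cA\backslash\cA[aN]\to\cA\backslash\cA[N]$.

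The core of the argument then consists of commuting $\res$ past these two operations, using the standard functoriality of localization triangles. On the one hand, $\pol_\varphi$ is defined on all of $\cA\backslash\cA[N]$, hence on $C$; therefore its restriction to $\cA\backslash\cA[aN]$ has vanishing residue along $C$, while by excision its residue along $\cA[N]$ still equals $\res(\pol_\varphi)=\varphi$. As $\cA[aN]=\cA[N]\sqcup C$, this shows that the residue of $\pol_\varphi|_{\cA\backslash\cA[aN]}$ along $\cA[aN]$ is $\varphi$ extended by zero from the summand $\cA[N]$. On the other hand, residue maps commute with the trace along the proper morphism $[a]$, i.e. $\res\circ\trace_{[a]}=\trace_{[a]}\circ\res_{\cA[aN]}$ on $H^{2g-1}(\cA\backslash\cA[aN],\sLog(g))$, where $\res_{\cA[aN]}$ denotes the residue along $\cA[aN]$ and the right-hand $\trace_{[a]}$ is taken along $[a]\colon\cA[aN]\to\cA[N]$; this is the $\sLog$-coefficient version of the argument proving Lemma~\ref{tr-and-loc-seq}. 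Combining the two, $\res(\tr_{[a]}\pol_\varphi)$ is the trace along $[a]\colon\cA[aN]\to\cA[N]$ of the section $\varphi$ supported on the summand $\cA[N]$. Since under the product decomposition $[a]$ is the second projection followed by the automorphism $[a]\colon\cA[N]\to\cA[N]$, the trace along the projection leaves that section unchanged, and we are left with its pushforward along the automorphism, which is exactly $[a]_*\varphi$. By injectivity of $\res$ we conclude $\tr_{[a]}\pol_\varphi=\pol_{[a]_*\varphi}$.

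The one delicate point, as always with the logarithm sheaf, is to make the two functoriality inputs rigorous: one must work with $\sLog^{(n)}$ at each finite level, form the localization sequences there and pass to the inverse limit (Proposition~\ref{log-coh-comp} guarantees the limit behaves), and---above all---check that the isogeny isomorphism $[a]^*\sLog\isom\sLog$ of \eqref{log-splitting} is compatible with restriction to open subschemes, with the boundary maps of localization triangles, and with the trace maps of finite \'etale morphisms. Once these compatibilities are in place, the residue computation reduces to the transparent description of $\cA[aN]$ and of $[a]\colon\cA[aN]\to\cA[N]$ recorded above.
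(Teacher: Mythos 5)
Your proof is correct and follows essentially the same route as the paper: the paper's proof consists precisely of the commutativity of $\res$ with $\tr_{[a]}$ (trace compatible with residues) together with the assertion that $\tr_{[a]}$ induces $[a]_*$ on $H^0({\mathcal A}[N],F)^0\subset H^0(S,\sLog[{\mathcal A}[N]]^0)$, which is exactly what your residue computation via ${\mathcal A}[aN]\isom{\mathcal A}[a]\times_S{\mathcal A}[N]$ establishes, and then concludes from the definition of $\pol_\varphi$. Your version merely unpacks that one-line assertion (and flags the level-$n$/limit compatibilities) in more detail than the paper does.
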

\begin{proof}
As the trace map is compatible with residues, we have a commutative diagram
$$
\xymatrix{
H^{2g-1}({\mathcal A}\backslash {\mathcal A}[N],\sLog(g))\ar[r]^/.5em/\res\ar[d]_{\tr_{[a]}}&
H^0(S,\sLog[{\mathcal A}[N]]^0)\ar[d]^{\tr_{[a]}}\\
H^{2g-1}({\mathcal A}\backslash {\mathcal A}[N],\sLog(g))\ar[r]^/.5em/\res&
H^0(S,\sLog[{\mathcal A}[N]]^0).
}
$$
The map ${\tr_{[a]}}$ induces on $H^0({\mathcal A}[N],F)^0\subset  H^0(S,\sLog[{\mathcal A}[N]]^0)$
the map $[a]_*:H^0({\mathcal A}[N],F)^0\to H^0({\mathcal A}[N],F)^0$. The result follows from
the definition of the polylogarithm.
\end{proof}
\begin{corollary}
Let $\pol^0_\varphi\in H^{2g-1}({\mathcal A}\backslash {\mathcal A}[N],F(g))$ be the
image of $\pol_\varphi$ under the morphism 
$$
H^{2g-1}({\mathcal A}\backslash {\mathcal A}[N],\sLog(g))\to H^{2g-1}({\mathcal A}\backslash {\mathcal A}[N],F(g))
$$
induced by the augmentation $\sLog\to F$. Then 
$$
\pol^0_\varphi\in H^{2g-1}({\mathcal A}\backslash {\mathcal A}[N],F(g))^{(0)}
$$
is in the generalized $0$ eigenspace of $\tr_{[a]}$.
\end{corollary}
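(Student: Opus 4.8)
The plan is to deduce this from Proposition~\ref{pol-norm-comp}, once we know that the map induced by the augmentation $\sLog\to F$,
$$H^{2g-1}({\mathcal A}\backslash{\mathcal A}[N],\sLog(g))\longrightarrow H^{2g-1}({\mathcal A}\backslash{\mathcal A}[N],F(g)),$$
is equivariant for $\tr_{[a]}$. Granting this, choose $a>1$ prime to the characteristic with $a\equiv 1\bmod N$. Then $[a]$ restricts to the identity on ${\mathcal A}[N]$, so $[a]_*\varphi=\varphi$ and Proposition~\ref{pol-norm-comp} gives $\tr_{[a]}\pol_\varphi=\pol_\varphi$. Applying the augmentation and the equivariance, $\tr_{[a]}\pol^0_\varphi=\pol^0_\varphi$, that is $(\tr_{[a]}-a^0\id)\pol^0_\varphi=0$ because $a^0=1$; hence $\pol^0_\varphi\in H^{2g-1}({\mathcal A}\backslash{\mathcal A}[N],F(g))^{(0)}$ by Definition~\ref{deftra}.

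It thus remains to establish the equivariance. Write $W:={\mathcal A}\backslash{\mathcal A}[N]$, so that $[a]^{-1}W={\mathcal A}\backslash{\mathcal A}[aN]\subset W$ is an open immersion. By construction $\tr_{[a]}$ on $\sLog$-cohomology is the composite of restriction along $[a]^{-1}W\hookrightarrow W$, the isomorphism induced by the identification $\sLog\isom[a]^*\sLog$ of~\eqref{log-splitting}, and the transfer $\trace_{[a]}$ of the finite morphism $[a]$; with $F$ in place of $\sLog$ one has the same description, the middle isomorphism now being the tautological $F=[a]^*F$. Restriction along an open immersion and the transfer of a finite morphism are functorial in the coefficient sheaf, hence commute with $\sLog\to F$. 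For the middle map it remains only to observe that the isomorphism $\sLog\isom[a]^*\sLog$ is compatible with the two augmentations onto $F$: this is immediate from the way it is built, namely from $\sH_{\mathcal A}\isom[a]^*\sH_{\mathcal A}$ applied to $\sLog^{(1)}$ — which respects the defining sequence $0\to\pi^*\sH\to\sLog^{(1)}\to F\to 0$ — and then propagated through the symmetric powers $\sLog^{(n)}=\Sym^n\sLog^{(1)}$ and the inverse limit, compatibly throughout with the transition maps down to $F$.

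I expect this compatibility of the trivialisation $\sLog\isom[a]^*\sLog$ with the augmentation to be the only non-formal point; everything else is pure functoriality. Note that for a general $a$ prime to $N$ the same computation yields only $\tr_{[a]}\pol^0_\varphi=\pol^0_{[a]_*\varphi}$, so that weight-$0$ membership with respect to such an $a$ holds exactly when $[a]_*\varphi=\varphi$; this is automatic when $\varphi$ is the fundamental class of ${\mathcal A}[N]\backslash\epsilon(S)$ — the case relevant to the main theorem — since then $\tr_{[a]}$ acts through the automorphism $x\mapsto ax$ of ${\mathcal A}[N]$, which fixes $\varphi$.
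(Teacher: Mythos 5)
Your proposal is correct and is essentially the paper's own argument: the paper proves this corollary with the single line "This is clear from Proposition \ref{pol-norm-comp}", and what you have written out — the $\tr_{[a]}$-equivariance of the augmentation-induced map (resting on the compatibility of $\sLog\isom[a]^*\sLog$ with the augmentations) plus reducing to $a\equiv 1\bmod N$, resp.\ to $[a]_*\varphi=\varphi$ — is precisely the intended deduction. Your closing remark correctly flags the only point the paper leaves implicit, namely that for general $a$ prime to $N$ one only gets $\tr_{[a]}\pol^0_\varphi=\pol^0_{[a]_*\varphi}$, which suffices in the case of the fundamental class used later.
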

\begin{proof}
This is clear from Proposition \ref{pol-norm-comp}.
\end{proof}
%%%%%%%%%%%%%%%%%%%%%%%%%%%%%%%%%%%%%%%%%%%%%%%%%%%%%%%%%%%%%%%%%%%%%%
%
\subsection{A motivic construction of the polylogarithm}
%
%%%%%%%%%%%%%%%%%%%%%%%%%%%%%%%%%%%%%%%%%%%%%%%%%%%%%%%%%%%%%%%%%%%%%%
The motivic construction presented here is similar to the one in
\cite{Kings99}, except that we consider here the variant of the polylogarithm
explained in \ref{pol-def}. We will focus on the differences and refer to
\cite{Kings99} whenever possible.

\begin{remark}
All constructions in this section work without any changes in any twisted Bloch-Ogus cohomology theory. To fix ideas, and because it is the "universal" case, we wrote everything for motivic cohomology.
\end{remark}

Let us recall some notations from \cite{Kings99}. Define
$$
U:=({\mathcal A}\backslash\epsilon(S))\times_S({\mathcal A}\backslash\epsilon(S))
$$
considered with the second projection $p:U\to {\mathcal A}\backslash\epsilon(S)$
as a scheme over ${\mathcal A}\backslash\epsilon(S)$. Let 
$V:=U\backslash\Delta$ be the complement of the diagonal and set
\begin{align*}
U^n&:=U\times_{{\mathcal A}\backslash\epsilon(S)}\cdots\times_{{\mathcal A}\backslash\epsilon(S)}U
\mbox{ $n$-times}\\
V^n&:=V\times_{{\mathcal A}\backslash\epsilon(S)}\cdots\times_{{\mathcal A}\backslash\epsilon(S)}V
\mbox{ $n$-times}.
\end{align*}
More generally, we let for $I\subset \{1,\ldots,n\}$ 
$$
V^I:=\{(u_1,\ldots,u_n)\in U^n\mid u_i\in V \mbox{ if }i\in I\mbox{ and }
u_i\in \Delta\mbox{ if }i\notin I\}.
$$
This gives a stratification $U^n=\bigcup_{I}V^I$.
Denote by $\Sigma_n$ the permutation group of $\{1,\ldots,n\}$
and let $\sgn_n$ denote the sign character. For any $\bbQ$-vector space $H$ 
with $\Sigma_n$ action, we denote by $H_{\sgn_n}$ the $\sgn_n$ eigenspace.

The fundamental result for the construction is:
\begin{proposition}\label{prop:localization-seq}[\cite{Kings99} Corollary 2.1.4]\label{main-exact-seq}
There is a long exact sequence
$$
\to H^\cdot_\sM(U^n,*)_{\sgn_n}\to
H^\cdot_\sM(V^n,*)_{\sgn_n}\xrightarrow{\res}
H^{\cdot-2g+1}_\sM(V^{n-1},*-g)_{\sgn_{n-1}}\to,
$$
where the residue map is taken along the $n$-th variable
and which is equivariant for the $\tr _{[a]}$ action for any integer
$a$.
\end{proposition}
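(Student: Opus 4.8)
The statement to prove is Proposition~\ref{prop:localization-seq} (the long exact sequence for $U^n, V^n$ in the sign eigenspace, equivariant for $\tr_{[a]}$). Here is how I would proceed.

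\medskip

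The plan is to reduce to the case $n=1$ by induction and then build the long exact sequence from the localization sequences attached to the stratification $U^n = \bigcup_I V^I$. First I would treat the base case: for $n=1$, $V = U \backslash \Delta$, and the localization sequence for the closed immersion $\Delta \hookrightarrow U$ reads
$$
\to H^{\cdot-2g}_\sM(\Delta,*-g)\to H^\cdot_\sM(U,*)\to H^\cdot_\sM(V,*)\xrightarrow{\res} H^{\cdot-2g+1}_\sM(\Delta,*-g)\to,
$$
using that $\Delta \cong {\mathcal A}\backslash\epsilon(S)$ has relative codimension $g$ and is smooth; identifying $\Delta$ with $V^0$ (a point in the stratification, i.e.\ the empty-set datum) gives the claimed shape with $n=1$. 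For the inductive step, I would stratify $U^n$ according to whether the $n$-th coordinate lies in $\Delta$ or in $V$: the open stratum is $V\times_{{\mathcal A}\backslash\epsilon(S)} U^{n-1}$ and the closed stratum is $\Delta\times_{{\mathcal A}\backslash\epsilon(S)} U^{n-1}\cong U^{n-1}$, of codimension $g$. The associated localization sequence, combined with the inductive description of $H^\cdot_\sM(V^{n-1},*)$, gives the sequence for $n$ after passing to $\sgn_n$-eigenspaces. The key point making the sign eigenspace behave well is that $\Sigma_n$ acts on $U^n$ permuting coordinates, the stratification is $\Sigma_n$-stable, and taking the $\sgn_n$-part is exact (we are in $\bbQ$-vector spaces), so it commutes with the long exact sequences; the residue along the $n$-th variable is the surviving piece after antisymmetrization because the $\Sigma_{n-1}\subset\Sigma_n$ stabilizing the $n$-th coordinate acts by $\sgn_{n-1}$ on the boundary term. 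This is exactly the argument of \cite{Kings99} Cor.~2.1.4, which I would cite for the combinatorial bookkeeping rather than reproduce in full.

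\medskip

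For the $\tr_{[a]}$-equivariance, I would invoke Lemma~\ref{tr-and-loc-seq}. One must check its hypotheses for each closed immersion used: with $\cB = {\mathcal A}\times_S\cdots\times_S{\mathcal A}$ (the appropriate fibre power), $W$ the relevant open subscheme ($U^n$ or an intermediate stratum), and $Z$ the closed stratum (a copy of $U^{n-1}$ sitting diagonally in the last two... more precisely $\Delta$ in the $n$-th factor). The crucial verification is that $[a]^{-1}(W)\subset W$ and that $[a](\wt Z) = Z$; this holds because $[a]$ respects the diagonal (a point is $a$-torsion-translate of the diagonal iff its $a$-multiple is, and the diagonal is preserved by $[a]\times[a]$), and because the complement of $\epsilon(S)$ and of $\Delta$ are both stable under $[a]$. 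Granting this, Lemma~\ref{tr-and-loc-seq} gives $\tr_{[a]}$-equivariance of each localization sequence, and since $\tr_{[a]}$ commutes with the $\Sigma_n$-action (multiplication by $a$ is $\Sigma_n$-equivariant on fibre powers), it descends to the $\sgn_n$-eigenspaces.

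\medskip

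The main obstacle I anticipate is not any single deep step but the careful setup of the stratification and the identification of the boundary/residue maps with the correct twists and shifts, together with verifying that the inductive description of $H^\cdot_\sM(V^{n-1},*)_{\sgn_{n-1}}$ feeds correctly into the next stage — in particular that no extra terms appear, which relies on the vanishing of the contributions of the deeper strata after antisymmetrization. Since all of this is carried out in \cite{Kings99}, the honest approach is to state the proposition, remark that the proof is identical to \emph{loc.\ cit.} (the only change being the use of $U = ({\mathcal A}\backslash\epsilon(S))^{\times_S 2}$ with the base ${\mathcal A}\backslash\epsilon(S)$ rather than the elliptic-curve setup), and add the one new ingredient here, namely the $\tr_{[a]}$-equivariance, which follows from Lemma~\ref{tr-and-loc-seq} applied strat-by-stratum as above.
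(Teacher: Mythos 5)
Your proposal matches the paper's treatment: the paper gives no independent proof of this proposition but simply quotes it as \cite[Corollary 2.1.4]{Kings99}, which is exactly the route you take, deferring the stratification/sign-eigenspace bookkeeping to \emph{loc.\ cit.} and supplying the $\tr_{[a]}$-equivariance via Lemma \ref{tr-and-loc-seq} (whose hypotheses you correctly verify, since $[a]^{-1}$ preserves the complements of $\epsilon(S)$ and of $\Delta$ and $[a]$ maps the relevant $\wt Z$ onto $Z$). Your sketch of the underlying argument — localization along the diagonal in the last factor, exactness of taking $\sgn_n$-parts over $\bbQ$, and vanishing of deeper strata in the sign eigenspace because a transposition acts trivially on them — is the correct mechanism behind Kings' result, so no gap to report.
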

The schemes $U^n$ and $V^n$ are considered over ${\mathcal A}\backslash\epsilon(S)$
and we consider the base change to ${\mathcal A}\backslash{\mathcal A}[N]\subset {\mathcal A}\backslash\epsilon(S)$:
\begin{align*}
U^n_{{\mathcal A}\backslash{\mathcal A}[N]}&:=U^n\times_{{\mathcal A}\backslash\epsilon(S)}{{\mathcal A}\backslash{\mathcal A}[N]}\\
V^n_{{\mathcal A}\backslash{\mathcal A}[N]}&:=V^n\times_{{\mathcal A}\backslash\epsilon(S)}{{\mathcal A}\backslash{\mathcal A}[N]}.
\end{align*}
Note that the base change is compatible with the $\Sigma_n$ action, so
that the same proof as for \cite[Corollary 2.1.4]{Kings99} 
shows that there is also a long exact sequence
$$
\to H^\cdot_\sM(U^n_{{\mathcal A}\backslash{\mathcal A}[N]},*)_{\sgn_n}\to
H^\cdot_\sM(V^n_{{\mathcal A}\backslash{\mathcal A}[N]},*)_{\sgn_n}\xrightarrow{\res}
H^{\cdot-2g+1}_\sM(V^{n-1}_{{\mathcal A}\backslash{\mathcal A}[N]},*-g)_{\sgn_{n-1}}\to.
$$
This sequence is still equivariant for the $\tr_{[a]}$-action. Now we use the fact that
\begin{equation}
H^\cdot_\sM(U^n,*)^{(0)}=0,
\end{equation}
which is an easy consequence of Proposition \ref{decomp-prop} by induction
(see also \cite[Theorem 2.2.3]{Kings99}). Note that the vanishing 
does not depend on the $a$ chosen to define  $\tr_{[a]}$.
If we 
combine this with the long exact sequence in the proposition we get:
\begin{corollary}\label{residue-isom}
The residue maps induce isomorphisms
\begin{align*}
H^\cdot_\sM(V^n_{{\mathcal A}\backslash{\mathcal A}[N]},*)^{(0)}_{\sgn_n}&\xrightarrow{\isom}
H^{\cdot-2g+1}_\sM(V^{n-1}_{{\mathcal A}\backslash{\mathcal A}[N]},*-g)^{(0)}_{\sgn_{n-1}}\xrightarrow{\isom}\cdots\\
&\cdots\xrightarrow{\isom}
H^{\cdot-(n+1)(2g-1)}_\sM({\mathcal A}[N]\backslash \epsilon(S),*-(n+1)g)^{(0)},
\end{align*}
which do not depend on the integer $a$ used to define the
operator $\tr _{[a]}$.
\end{corollary}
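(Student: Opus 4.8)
The plan is to run the base‑changed long exact sequence of Proposition~\ref{main-exact-seq} through the generalized weight‑$0$ eigenspace functor for $\tr_{[a]}$, iterate the resulting residue isomorphisms, and close the chain with a degree‑shifted version of Corollary~\ref{corfund}.

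The structural fact I would establish first --- and it is the only non‑formal point --- is that for every $m\ge 1$ the operator $t:=\tr_{[a]}$ acts on $H^\cdot_\sM(U^m_{{\mathcal A}\backslash{\mathcal A}[N]},*)_{\sgn_m}$ \emph{locally finitely}, with all generalized eigenvalues of the form $a^r$, $r\ge 1$; equivalently, $t-\id$ acts invertibly there, and hence on all of its $t$-stable subquotients. The Deninger--Murre decomposition makes $\tr_{[a]}$ act semisimply, by the scalars $a^r$, on the motivic cohomology of the abelian scheme ${\mathcal A}\times_S\cdots\times_S{\mathcal A}$ ($m+1$ factors), in particular as a finite direct sum of eigenspaces; propagating this through the $\tr_{[a]}$-equivariant localization sequences exhibiting $U^m_{{\mathcal A}\backslash{\mathcal A}[N]}$ as an open in that abelian scheme --- and using that subquotients and extensions of locally finite $\bbQ[t]$-modules are again locally finite with no new generalized eigenvalues --- yields local finiteness for $H^\cdot_\sM(U^m_{{\mathcal A}\backslash{\mathcal A}[N]},*)_{\sgn_m}$; and the eigenvalue $a^0=1$ does not occur there because its generalized eigenspace is the vanishing weight‑$0$ part. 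This last vanishing, $H^\cdot_\sM(U^m_{{\mathcal A}\backslash{\mathcal A}[N]},*)^{(0)}=0$ for $m\ge 1$, is obtained just as $H^\cdot_\sM(U^m,*)^{(0)}=0$ is: for $m\ge 1$ the iterated fibre product has at least one factor equal to ${\mathcal A}\backslash\epsilon(S)$, whose weight‑$0$ motivic cohomology vanishes by Proposition~\ref{decomp-prop}, and this propagates through the localization sequences.

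Granting this, fix $m\ge 1$ and look at the relevant stretch of the $\tr_{[a]}$-equivariant base‑changed sequence of Proposition~\ref{main-exact-seq}:
\begin{multline*}
H^\cdot_\sM(U^m_{{\mathcal A}\backslash{\mathcal A}[N]},*)_{\sgn_m}\xrightarrow{g} H^\cdot_\sM(V^m_{{\mathcal A}\backslash{\mathcal A}[N]},*)_{\sgn_m}\\
\xrightarrow{\res} H^{\cdot-2g+1}_\sM(V^{m-1}_{{\mathcal A}\backslash{\mathcal A}[N]},*-g)_{\sgn_{m-1}}\xrightarrow{\delta} H^{\cdot+1}_\sM(U^m_{{\mathcal A}\backslash{\mathcal A}[N]},*)_{\sgn_m}.
\end{multline*}
As $\ker\res=\im g$ is a $t$-stable quotient of the $U^m$-group, $t-\id$ is invertible on $\im g$, so $(\im g)^{(0)}=0$ and $\res$ is injective on weight‑$0$ parts. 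For surjectivity onto the weight‑$0$ part of the target, pick $c$ with $(t-\id)^kc=0$: then $\delta(c)$ lies in $H^{\cdot+1}_\sM(U^m_{{\mathcal A}\backslash{\mathcal A}[N]},*)^{(0)}_{\sgn_m}=0$, so $c=\res(b)$ for some $b$; writing $(t-\id)^kb=g(u_0)$ and putting $u:=(t-\id)^{-k}u_0$ (legitimate since $t-\id$ is invertible on the $U^m$-group), the class $b':=b-g(u)$ satisfies $\res(b')=c$ and $(t-\id)^kb'=0$. Thus $\res$ restricts to an isomorphism $H^\cdot_\sM(V^m_{{\mathcal A}\backslash{\mathcal A}[N]},*)^{(0)}_{\sgn_m}\xrightarrow{\isom}H^{\cdot-2g+1}_\sM(V^{m-1}_{{\mathcal A}\backslash{\mathcal A}[N]},*-g)^{(0)}_{\sgn_{m-1}}$.

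Composing these isomorphisms for $m=n,n-1,\dots,1$ --- noting $V^0_{{\mathcal A}\backslash{\mathcal A}[N]}={\mathcal A}\backslash{\mathcal A}[N]$ and that $\sgn_0$ is trivial --- brings one to $H^{\cdot-n(2g-1)}_\sM({\mathcal A}\backslash{\mathcal A}[N],*-ng)^{(0)}$, and the last residue along ${\mathcal A}[N]$ is then the degree‑shifted form of the isomorphism of Corollary~\ref{corfund}, whose proof via Proposition~\ref{decomp-prop}, Lemma~\ref{tr-and-loc-seq} and the localization sequence carries over verbatim in an arbitrary degree; this lands in $H^{\cdot-(n+1)(2g-1)}_\sM({\mathcal A}[N]\backslash\epsilon(S),*-(n+1)g)^{(0)}$. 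Independence of $a$ is then automatic: the residue maps do not mention $a$, and by Proposition~\ref{decomp-prop} neither do the eigenspace decompositions used above. I expect the one genuinely delicate point to be exactly the claim of the second paragraph; without the local finiteness and the absence of the eigenvalue $1$, the weight‑$0$ eigenspace functor is not exact enough and both the injectivity and the surjectivity step above break down.
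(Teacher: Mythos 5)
Your proposal is correct and follows essentially the same route as the paper: base-change the $\tr_{[a]}$-equivariant sequence of Proposition~\ref{main-exact-seq}, use the vanishing $H^\cdot_\sM(U^n_{{\mathcal A}\backslash{\mathcal A}[N]},*)^{(0)}=0$ (an induction from Proposition~\ref{decomp-prop}, cf.\ \cite[Theorem 2.2.3]{Kings99}), and finish with the degree-shifted form of Corollary~\ref{corfund}. The local-finiteness point you isolate (eigenvalues among $a^r$, $r\ge 1$, so that $\tr_{[a]}-\id$ is invertible on the $U$-terms and passing to weight-$0$ parts is exact) is exactly what the paper leaves implicit in its appeal to the decomposition results, so your extra care is a faithful expansion rather than a different argument.
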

We can now define the motivic polylogarithm. 
\begin{definition}\label{mot-pol-def}
For any $\varphi\in H^0_\sM({\mathcal A}[N]\backslash \epsilon(S),0)^{(0)}$
define the \emph{motivic polylogarithm}
$$
\pol_{\sM,\varphi}^{n}\in H^{(2g-1)(n+1)}_\sM(V^n_{{\mathcal A}\backslash{\mathcal A}[N]},(n+1)g)^{(0)}_{\sgn_n}
$$
to be the class, which maps to $\varphi$ under the isomorphisms in
Corollary \ref{residue-isom}.
\end{definition}
From this characterization we get immediately:
\begin{proposition}
The polylogarithm is compatible with base change.
\end{proposition}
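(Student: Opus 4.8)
The plan is to read the base-change compatibility off the defining property of $\pol^n_{\sM,\varphi}$ in Definition \ref{mot-pol-def}: it is the \emph{unique} class mapping to $\varphi$ under the chain of residue isomorphisms of Corollary \ref{residue-isom}, so it is enough to prove that this whole chain, together with the object $\varphi$ at its foot, is natural under pullback. Let $f\colon S'\to S$ be a morphism of irreducible smooth quasi-projective schemes over a field and write $\cA'=\cA\times_S S'$, $\epsilon'\colon S'\to\cA'$, $\cA'[N]=\cA[N]\times_S S'$. Forming the auxiliary schemes $U$, $V$, $U^n$, $V^n$, $V^I$ and their opens over $\cA\backslash\cA[N]$ commutes with $-\times_S S'$, as do the maps $[a]$, the zero sections and the $\Sigma_n$-actions; hence $f$ induces pullbacks $f^*$ on all the motivic cohomology groups in sight and on $H^0_\sM(\cA[N]\backslash\epsilon(S),0)$.

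First I would note that $f^*$ commutes with the trace operators $\tr_{[a]}$ of Definition \ref{deftra}, since $[a]$, the open immersion $j$ and the transfer $\trace_{[a]}$ all commute with base change. Hence $f^*$ respects the weight decomposition of Proposition \ref{decomp-prop} --- whose idempotents $\pi_i$ are themselves pulled back from $\cA\times_S\cA$ --- and carries generalised $0$-eigenspaces to generalised $0$-eigenspaces; in particular the vanishing $H^\cdot_\sM(U^n_{\cA\backslash\cA[N]},*)^{(0)}=0$ that drives Corollary \ref{residue-isom} persists after base change, being an instance of Proposition \ref{decomp-prop} for $\cA'\to S'$. Next I would verify that the long exact localisation sequence of Proposition \ref{main-exact-seq}, and its variant over $\cA\backslash\cA[N]$, map under $f^*$ to the corresponding sequences for $\cA'/S'$: this is the functoriality of the Gysin/localisation triangles with respect to the Cartesian squares produced by $f$ (all schemes stay smooth over the ground field, so the Gysin morphisms and residue maps are defined and natural --- this is exactly the input on Gysin sequences recalled in the paper), and it is compatible with the $\sgn_n$-projectors and with $\tr_{[a]}$ by the previous remarks. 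Restricting to the weight-$0$, $\sgn_n$-parts, these maps assemble into a commutative ladder between the chains of residue isomorphisms of Corollary \ref{residue-isom} for $\cA/S$ and for $\cA'/S'$.

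Finally I would identify the bottom rung: because $\cA'[N]\backslash\epsilon'(S')=(\cA[N]\backslash\epsilon(S))\times_S S'$, the class $f^*\varphi$ lies in $H^0_\sM(\cA'[N]\backslash\epsilon'(S'),0)^{(0)}$, and chasing $\varphi$ up the ladder shows that $f^*\pol^n_{\sM,\varphi}$ maps to $f^*\varphi$ under the residue isomorphisms for $\cA'/S'$. By the uniqueness in Definition \ref{mot-pol-def} this gives $f^*\pol^n_{\sM,\varphi}=\pol^n_{\sM,f^*\varphi}$, hence the asserted base-change compatibility; applying the augmentation $\sLog\to F$ one obtains the same for $\pol^0_\varphi$, and the argument is formal enough to hold verbatim in any twisted Bloch--Ogus theory, in particular in absolute Hodge or $\ell$-adic cohomology.

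The only step that is not pure formalism is the naturality of the residue maps in the second paragraph: they are boundary maps in localisation sequences rather than honest pullbacks, so one genuinely needs the theory of Gysin sequences to know they stay natural under an arbitrary base change $f$ --- including along the closed immersion of a fibre, which is the case used later in the paper. Once that is granted, everything reduces to a diagram chase feeding the uniqueness built into the definition of the polylogarithm.
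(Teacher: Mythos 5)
Your proposal is correct and follows essentially the same route as the paper: the paper's proof is the one-line observation that the Gysin (localisation) sequence is compatible with base change, citing D\'eglise, which is exactly the non-formal step you isolate, and the rest of your argument is the diagram chase through the residue isomorphisms and the uniqueness in the definition that the paper leaves implicit. Your elaboration of the $\tr_{[a]}$-equivariance and the persistence of the weight-$0$ vanishing after base change is a faithful unpacking of what the paper takes for granted, not a different method.
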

\begin{proof}
By \cite{Deglise-Exc} the Gysin sequence is compatible with base change.
\end{proof}
%%%%%%%%%%%%%%%%%%%%%%%%%%%%%%%%%%%%%%%%%%%%%%%%%%%%%%%%%%%%%%%%%%%%%%
%
\subsection{Comparison with the realizations of the polylogarithm}
%
%%%%%%%%%%%%%%%%%%%%%%%%%%%%%%%%%%%%%%%%%%%%%%%%%%%%%%%%%%%%%%%%%%%%%%
In this section we relate the motivic polylogarithm $\pol_{\sM,\varphi}$ 
from Definition \ref{mot-pol-def} via the regulator maps to
the $\pol_\varphi$ as in Definition \ref{pol-def}.

Consider the regulator maps
into absolute Hodge 
$$
r_\cH:H^{(2g-1)(n+1)}_\sM(V^n_{{\mathcal A}\backslash{\mathcal A}[N]},(n+1)g)\to
H^{(2g-1)(n+1)}_\cH(V^n_{{\mathcal A}\backslash{\mathcal A}[N]},(n+1)g)
$$
and to $\ell$-adic cohomology
$$
r_\ell:H^{(2g-1)(n+1)}_\sM(V^n_{{\mathcal A}\backslash{\mathcal A}[N]},(n+1)g)\to
H^{(2g-1)(n+1)}_\et(V^n_{{\mathcal A}\backslash{\mathcal A}[N]},(n+1)g).
$$
As in Section \ref{log-sheaf-section} we will treat the absolute Hodge
and the $\ell$-adic case simultaneously. We start 
by identifying $H^{(2g-1)(n+1)}(V^n_{{\mathcal A}\backslash{\mathcal A}[N]},(n+1)g)^{(0)}_{\sgn_n}$.
\begin{proposition}
One has an isomorphism
$$
H^{(2g-1)(n+1)}(V^n_{{\mathcal A}\backslash{\mathcal A}[N]},(n+1)g)^{(0)}_{\sgn_n}\isom
H^{2g-1}({\mathcal A}\backslash{\mathcal A}[N],\sLog^{(n)}(g))^{(0)}
$$
and a commutative diagram
$$
\xymatrix{
H^{(2g-1)(n+1)}(V^n_{{\mathcal A}\backslash{\mathcal A}[N]},(n+1)g)^{(0)}_{\sgn_n}\ar[r]^/.5em/\res\ar[d]_\isom&
H^{(2g-1)n}(V^{n-1}_{{\mathcal A}\backslash{\mathcal A}[N]},ng)^{(0)}_{\sgn_{n-1}}\ar[d]^\isom\\
H^{2g-1}({\mathcal A}\backslash{\mathcal A}[N],\sLog^{(n)}(g))^{(0)}\ar[r]&
H^{2g-1}({\mathcal A}\backslash{\mathcal A}[N],\sLog^{(n-1)}(g))^{(0)},
}
$$
where the lower horizontal map is induced by $\sLog^{(n)}\to \sLog^{(n-1)}$.
\end{proposition}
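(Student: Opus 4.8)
The plan is to reduce the statement about the cohomology of $V^n_{\cA\backslash\cA[N]}$ to the cohomology of $\sLog^{(n)}$ by unwinding the definition of $\sLog^{(n)}=\Sym^n\sLog^{(1)}$ in terms of the relative cohomology of fibre products of $\cA\backslash\epsilon(S)$. The key observation is that $\sLog^{(1)}$ sits in the exact sequence $0\to\pi^*\sH\to\sLog^{(1)}\to F\to 0$, and $\sH=(R^1\pi_*F)^\vee$; by the motivic decomposition (Proposition \ref{decomp-prop}) one has, writing $p\colon U\to\cA\backslash\epsilon(S)$ for the projection, that the weight-$0$ part of $R^\cdot p_*$ of $F$ on $U$, or equivalently on $V=U\backslash\Delta$, recovers successive extensions of $\Sym^k\sH$. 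First I would establish, by repeated use of the Leray/Gysin spectral sequence for $p\colon U\to\cA\backslash\epsilon(S)$ together with the vanishing $H^\cdot_\sM(U^n,*)^{(0)}=0$ quoted above, the identification
$$
H^{(2g-1)(n+1)}(V^n_{\cA\backslash\cA[N]},(n+1)g)^{(0)}_{\sgn_n}\isom
H^{2g-1}(\cA\backslash\cA[N],\sLog^{(n)}(g))^{(0)}.
$$
This is precisely the argument of \cite[Section~2]{Kings99}, with the base changed from $\cA\backslash\epsilon(S)$ to $\cA\backslash\cA[N]$; since the base change is flat and compatible with the $\Sigma_n$-action and with the $\tr_{[a]}$-action, the same proof applies verbatim, and I would simply cite it, indicating the one place where the base $S$ is replaced.

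Next I would address the commutative square. The top horizontal arrow is the residue map of Proposition \ref{prop:localization-seq}, taken along the $n$-th variable; the right-hand vertical isomorphism is the one just constructed for $n-1$. The bottom arrow is induced by the surjection $\sLog^{(n)}=\Sym^n\sLog^{(1)}\to\Sym^{n-1}\sLog^{(1)}=\sLog^{(n-1)}$ coming from $\sLog^{(1)}\to F$. The point is to check that under the two vertical identifications these two maps agree. Here I would trace through how the residue along the $n$-th factor of $V^n$ corresponds, on the level of the logarithm sheaf, to contracting one symmetric tensor factor against the fundamental class: the residue along a single copy of $V=U\backslash\Delta$ produces, by the localization sequence $H^\cdot_\sM(U,*)\to H^\cdot_\sM(V,*)\xrightarrow{\res}H^{\cdot-2g+1}_\sM(\cA\backslash\epsilon(S),*-g)$, exactly the transition map $\sLog^{(1)}\to F$ after identifying $R^\cdot p_*F$ on $U$ (resp.\ $V$) with (truncations of) $\sLog$. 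Taking $\Sigma_n$-antiinvariants turns the $n$-fold iterated residue into the passage $\Sym^n\to\Sym^{n-1}$, which is the claim. Again this is essentially the compatibility already proved in \cite[Section~2]{Kings99} for the original polylogarithm, so the work is to observe that nothing in that argument uses the specific base $S$ rather than $\cA\backslash\cA[N]$.

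I expect the main obstacle to be bookkeeping rather than any genuinely new difficulty: one must carefully set up the Leray spectral sequences for the iterated fibrations $V^n\to V^{n-1}\to\cdots\to\cA\backslash\epsilon(S)$, keep track of the weight decompositions under $\tr_{[a]}$ at each stage (using that on $U^n$ the weight-$0$ part vanishes, so that the spectral sequences degenerate onto the relevant line), and verify that the sign-eigenspace functor $(-)_{\sgn_n}$ interacts correctly with the symmetrization $\sLog^{(1)}\mapsto\Sym^n\sLog^{(1)}$. The one genuinely content-bearing check is the identification of the single residue map $H^\cdot(V,*)\xrightarrow{\res}H^{\cdot-2g+1}(\cA\backslash\epsilon(S),*-g)$ with the augmentation $\sLog^{(1)}\to F$; but this is already implicit in the construction of $\sLog^{(1)}$ via the Leray sequence $0\to\Ext^1_S(F,\sH)\to\Ext^1_\cA(F,\pi^*\sH)\to\Hom_S(\sH,\sH)\to 0$ recalled in Section \ref{log-sheaf-section}, and the proof in \cite{Kings99} can be quoted. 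Consequently the whole proposition follows by combining Proposition \ref{decomp-prop}, Corollary \ref{residue-isom}, and the cited results of \cite{Kings99}, checking that each step is stable under the base change $S\rightsquigarrow\cA\backslash\cA[N]$ and under passage to $\sgn_n$-eigenspaces.
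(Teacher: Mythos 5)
Your outline follows the same route as the paper: identify $R^{2g-1}p_*F_{\wt V}$ with the restriction of $\sLog^{(1)}$ to $\cA\backslash\cA[N]$, apply a K\"unneth decomposition and the Leray spectral sequence over $\cA\backslash\cA[N]$, isolate the top line by a weight argument, and identify the residue along one factor with the augmentation $\sLog^{(1)}\to F$ — i.e.\ an adaptation of \cite[Prop.\ 2.3.1]{Kings99}. Two points in your write-up, however, would not survive scrutiny as stated. First, the proposition lives in the absolute Hodge and $\ell$-adic realizations, not in motivic cohomology: in the motivic setting of this paper there is no sheaf $\sLog$ and no Leray spectral sequence for $p$ (which is exactly why the motivic polylogarithm is built out of the schemes $V^n$ in the first place). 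So your appeals to $H^\cdot_\sM(U^n,*)^{(0)}=0$ and to the motivic localization sequence must be replaced by their realization analogues; the actual inputs are that the $R^ip_{V^n,*}F$ are lisse, the K\"unneth formula dual to $Rp_{V^n,!}$, and the projection formula for $R\wt\pi_*$.

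Second, and more substantively, the vanishing that makes the spectral sequence concentrate on the relevant line is not $H^\cdot(U^n,*)^{(0)}=0$ (that vanishing is what underlies Corollary \ref{residue-isom}); what is needed is
$H^\cdot(\cA\backslash\cA[N],R^{i_1}p_*F_{\wt V}\otimes\cdots\otimes R^{i_n}p_*F_{\wt V}(gn))^{(0)}=0$ whenever $i_1+\cdots+i_n<(2g-1)n$. The paper proves this by observing that some factor is then of the form $\pi^*R^{i_j}\pi_*F$ with $i_j<2g-1$, pulling it out via the projection formula, and using that $\tr_{[a]}$ acts on it with weight $\geq 2$ while acting with weight $\geq 0$ on the remaining tensor factor, so the weight-$0$ part vanishes. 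Relatedly, \cite[Prop.\ 2.3.1]{Kings99} is proved for the weight-$1$ eigenspaces, so it cannot be cited "verbatim": besides changing the base from $S$ to $\cA\backslash\cA[N]$, the eigenvalue changes from $1$ to $0$, and the weight bookkeeping just described is precisely the part that has to be redone. With these corrections your plan, including the final commutative diagram matching the residue with the transition $\sLog^{(n)}\to\sLog^{(n-1)}$, coincides with the paper's proof.
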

\begin{proof} This is essentially proven in \cite[Proposition 2.3.1]{Kings99}
but for the weight $1$ parts.

Let $\ol U:=({\mathcal A}\backslash \epsilon(S))\times {\mathcal A}$ and $\ol V:=\ol U\backslash\Delta$.
We denote by $\ol p$ the second projection. We also let
$\wt V^n:=V^n_{{\mathcal A}\backslash{\mathcal A}[N]}$ to shorten notation.
It is shown in the first part 
of the proof of \cite[Proposition 2.3.1]{Kings99} that
$$
R^i\ol p_*F_{\ol V}(g)\isom \begin{cases}
\pi^*R^i\pi_*F(g)&i<2g-1\\
\sLog^{(1)}&i=2g-1\\
0&i=2g.
\end{cases}
$$
It follows that $R^{2g-1}p_*F_{\wt V}$ is isomorphic to the restriction of
$\sLog^{(1)}$ to ${\mathcal A}\backslash {\mathcal A}[N]$.

Let $p_{V^n}:\wt V^n\to {{\mathcal A}\backslash{\mathcal A}[N]}$ be the 
structure map. As the $R^ip_{V^n,*}F_{\wt V^n}$ are
all lisse sheaves, we can take the dual of the K\"unneth formula for $Rp_{V^n,!}$
to get
$$
R^ip_{V^n,*}F_{\wt V^n}\isom
\bigoplus_{i_1+\cdots i_n=i}R^{i_1}p_*F_{\wt V}\otimes\cdots
\otimes R^{i_n}p_*F_{\wt V}.
$$
In particular, 
$R^{(2g-1)n}p_{V^n,*}F_{\wt V^n}\isom (\sLog^{(1)})^{\otimes n}$
on ${\mathcal A}\backslash{\mathcal A}[N]$.  Let $\wt\pi:{\mathcal A}\backslash{\mathcal A}[N]\to S$ be
the structure map. Note that the projection formula for $R\wt\pi_!$ gives
for lisse sheaves a projection formula for $R\wt\pi_*$. From the exact sequence 
$$
0\to \pi^*\sH\to \sLog^{(1)}\to F\to 0
$$
we see that $\tr_{[a]}$ acts with weights $\ge 0$ on $R^i\wt\pi_*\sLog^{(1)}$.
We claim that 
\begin{equation}\label{vanishing-claim}
H^\cdot({\mathcal A}\backslash {\mathcal A}[N],R^{i_1}p_*F_{\wt V}\otimes\cdots
\otimes R^{i_n}p_*F_{\wt V}(gn))^{(0)}=0
\end{equation}
whenever $i_1+\ldots +i_n<(2g-1)n$.
If this is the case, 
at least one factor in the tensor product is of
the form $\pi^*R^{i_j}\pi_*F$ with $i_j<2g-1$. Without loss of generality,
we may assume $j=1$. We get
\begin{multline*}
R^k\wt\pi_*(R^{i_1}p_*F_{\wt V}\otimes\cdots
\otimes R^{i_n}p_*F_{\wt V})\isom \\
\isom
R^{i_1} \pi_*F\otimes R^k\wt \pi_*(R^{i_2}p_*F_{\wt V}\otimes\cdots
\otimes R^{i_n}p_*F_{\wt V}).
\end{multline*}
The trace operator $\tr_{[a]}$ acts on $R^{i_1} \pi_*F$ with weight $\ge 2$
on $(R^{i_2}p_*F_{V_{{\mathcal A}\backslash{\mathcal A}[N]}}\otimes\cdots
\otimes R^{i_n}p_*F_{V_{{\mathcal A}\backslash{\mathcal A}[N]}})$ with some weight $\ge 0$.
This gives the vanishing in Equation \eqref{vanishing-claim}. We get
$$
H^{(2g-1)(n+1)}(\wt V^n,(n+1)g)^{(0)}_{\sgn_n}\isom
H^{2g-1}({\mathcal A}\backslash{\mathcal A}[N],\Sym^n\sLog^{(1)}),
$$
which gives the first claim of the proposition. The compatibility
of the residue with $\sLog^{(n)}\to \sLog^{(n-1)}$ follows from the
commutative diagram
$$
{\footnotesize
\xymatrix{
H^{(2g-1)(n+1)}(\wt U^n,(n+1)g)^{(0)}_{\sgn_n}\ar[r]\ar[d]^\isom&
H^{(2g-1)(n+1)}(\wt V^n,(n+1)g)^{(0)}_{\sgn_n}\ar[r]\ar[d]^\isom&
H^{(2g-1)n}(\wt V^{n-1},ng)^{(0)}_{\sgn_{n-1}}\ar[d]^=\\
H^{(2g-1)n}(\wt U^{n-1},p_{\wt U^{n-1}}^*\sH(ng)^{(0)}_{\sgn_n}\ar[r]&
H^{(2g-1)n}(\wt V^{n-1},p_{\wt V^{n-1}}^*\sLog^{(1)}(ng))^{(0)}_{\sgn_n}\ar[r]&
H^{(2g-1)n}(\wt V^{n-1},ng)^{(0)}_{\sgn_{n-1}}.
}}
$$
\end{proof}
\begin{corollary}
The motivic polylogarithm 
$$
\pol_{\sM,\varphi}^{n}\in H^{(2g-1)(n+1)}_\sM(V^n_{{\mathcal A}\backslash{\mathcal A}[N]},(n+1)g)^{(0)}_{\sgn_n}
$$
maps under the regulator maps to the absolute Hodge and to the \'etale polylogarithm
\begin{align*}
r_\sH(\pol_{\sM,\varphi}^{n})=\pol_{\cH,\varphi}^{n}&&
r_\et(\pol_{\sM,\varphi}^{n})=\pol_{\et,\varphi}^{n}.
\end{align*}
\end{corollary}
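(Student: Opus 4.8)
The statement to prove is that the motivic polylogarithm $\pol_{\sM,\varphi}^n$ maps to the Hodge- and $\ell$-adic polylogarithms under the regulator maps. The strategy is to reduce both sides to the comparison already established at the level of the logarithm sheaf in the preceding Proposition. Concretely, the regulator maps $r_\cH$ and $r_\ell$ are themselves compatible with Gysin/residue maps in any twisted Bloch--Ogus cohomology theory: this is part of the axioms satisfied by the cycle class maps into absolute Hodge and \'etale cohomology (one can invoke the functoriality of Gysin sequences, as in \cite{Deglise-Exc}). So first I would record that for each $n$ there is a commutative diagram relating the motivic residue sequence of Proposition \ref{main-exact-seq} (for $V^n_{{\mathcal A}\backslash{\mathcal A}[N]}$) to the corresponding residue sequences in absolute Hodge and $\ell$-adic cohomology, with vertical arrows the regulator maps.

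Next, I would run the iterated residue isomorphism of Corollary \ref{residue-isom} on both the source and target of $r_\cH$ (resp. $r_\ell$). On the motivic side this identifies $\pol_{\sM,\varphi}^n$ with $\varphi \in H^0_\sM({\mathcal A}[N]\backslash\epsilon(S),0)^{(0)}$. On the Hodge (resp. \'etale) side, the same iterated residue identifies $H^{(2g-1)(n+1)}(V^n_{{\mathcal A}\backslash{\mathcal A}[N]},(n+1)g)^{(0)}_{\sgn_n}$ with $H^{2g-1}({\mathcal A}\backslash{\mathcal A}[N],\sLog^{(n)}(g))^{(0)}$ (this is exactly the content of the Proposition immediately preceding the statement, whose proof applies verbatim in the Hodge and $\ell$-adic settings since it only used the K\"unneth formula, the projection formula, and the weight estimates coming from $0\to\pi^*\sH\to\sLog^{(1)}\to F\to 0$). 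Under this chain of identifications, $r_\cH(\pol_{\sM,\varphi}^n)$ is pinned down by its image being $\varphi$, which by the very Definition \ref{pol-def} of $\pol_{\cH,\varphi}^n$ (via Corollary \ref{pol-ext}, the localization/residue description in the logarithm sheaf) is precisely $\pol_{\cH,\varphi}^n$; likewise for $r_\ell$ and $\pol_{\et,\varphi}^n$.

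The one point requiring care, and the step I expect to be the main obstacle, is the \emph{compatibility of the regulator with the residue map that lands in the logarithm sheaf}, i.e.\ making sure that the regulator intertwines the motivic residue $\res\colon H^\cdot_\sM(V^n,\ast)_{\sgn_n}\to H^{\cdot-2g+1}_\sM(V^{n-1},\ast-g)_{\sgn_{n-1}}$ with the corresponding map after the K\"unneth identification $R^{(2g-1)n}p_{V^n,*}F\isom (\sLog^{(1)})^{\otimes n}$. This is a matter of tracking the Leray spectral sequence edge maps and the K\"unneth isomorphisms through the regulator, which is a formal — but slightly delicate — bookkeeping exercise; it works because the regulator is a morphism of twisted Poincar\'e duality theories and all the sheaf-theoretic inputs (Leray, K\"unneth, projection formula) are functorial for it. Once this compatibility is in place, the equalities $r_\sH(\pol_{\sM,\varphi}^n)=\pol_{\cH,\varphi}^n$ and $r_\et(\pol_{\sM,\varphi}^n)=\pol_{\et,\varphi}^n$ follow by uniqueness, since both sides are characterized by the same residue condition.

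Passing to the limit over $n$ (the regulator commutes with the inverse limit defining $H^{2g-1}({\mathcal A}\backslash{\mathcal A}[N],\sLog(g))$, the transition maps being compatible by the commutative diagram in the preceding Proposition) then yields $r_\sH(\pol_{\sM,\varphi})=\pol_{\cH,\varphi}$ and $r_\et(\pol_{\sM,\varphi})=\pol_{\et,\varphi}$ as well, which is the form most useful for the applications.
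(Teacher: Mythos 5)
Your argument is correct and is essentially the paper's own (the corollary is stated there without proof, being regarded as immediate): one uses the compatibility of the regulator maps with the Gysin/residue sequences, the identification of the realization groups with $H^{2g-1}({\mathcal A}\backslash{\mathcal A}[N],\sLog^{(n)}(g))^{(0)}$ from the preceding proposition, and the fact that both sides are uniquely characterized within the weight-$0$ eigenspace by having residue $\varphi$. The point you flag as delicate (intertwining the residue with the K\"unneth/Leray identifications) is exactly what that proposition supplies, so no genuinely new ingredient is needed.
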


\section{The abelian polylogarithm in degree $0$ and the higher analytic torsion 
of the Poincar\'e bundle}

In the first subsection, we recall some concepts and results from Arakelov theory and we state the main result of 
this section, ie Theorem \ref{polg}. In the second subsection, we give a proof of Theorem \ref{polg}. 

\subsection{Canonical currents on abelian schemes}

We begin with a review of some notations and definitions coming from Arakelov theory.

Let $(R,\Sigma)$ be an arithmetic ring. By definition, this means
that $R$ is an excellent regular ring, which comes with a finite conjugation-invariant set $\Sigma$ of embeddings into $\bbC$ (see \cite[3.1.2]{Gillet-Soule-Arithmetic}).

For example $R$ might be $\bbZ$ with its unique embedding into $\bbC$, or $\bbC$ with
the identity and complex conjugation as embeddings.

An {\it arithmetic variety} over $R$ is a regular scheme, which is flat and quasi-projective over $R$. This definition is more restrictive than the definition of the same term given in 
\cite{Gillet-Soule-Arithmetic}.

For any arithmetic variety $X$ over $R$, we write as usual
$$
X(\bbC):=\coprod_{\sigma\in \Sigma}(X\times_{R,\sigma}\bbC)(\bbC)=:\coprod_{\sigma\in \Sigma}X(\bbC)_\sigma.
$$
For any $p\geqslant 0$, we let
$D^{p,p}(X_\bbR)$ (resp. $A^{p,p}(X_\bbR)$) be the $\bbR$-vector space of currents (resp. differential forms) $\gamma$ on $X(\bbC)$ such that
\begin{itemize}
\item[$\bullet$] $\gamma$ is a real current (resp. differential form) of type $(p,p)$;
\item[$\bullet$] $F_\infty^*\gamma=(-1)^p\gamma$,
\end{itemize}
where $F_\infty:X(\bbC)\to X(\bbC)$ is the real analytic involution given by complex conjugation. 
We then define 
$$
\widetilde{D}^{p,p}(X_\bbR):=D^{p,p}(X_\bbR)/(\im\ \partial+\im\ \bar\partial)\
$$
(resp.
$$
\widetilde{A}^{p,p}(X_\bbR):=A^{p,p}(X_\bbR)/(\im\ \partial+\im\ \bar\partial)\ \qquad\quad).
$$

All these notations are standard in Arakelov geometry. See \cite{Soule-Lectures} for
a compendium. It is shown in \cite[Th. 1.2.2 (ii)]{Gillet-Soule-Arithmetic} that the natural map $\widetilde{A}^{p,p}(X_\bbR)\to \widetilde{D}^{p,p}(X_\bbR)$ is an injection. 

If $Z$ a closed complex submanifold of $X(\bbC)$, we shall write 
more generally 
$D^{p,p}_Z(X_\bbR)$  for the $\bbR$-vector space of currents  $\gamma$ on $X(\bbC)$ such that
\begin{itemize}
\item[$\bullet$] $\gamma$ is a real current of type $(p,p)$;
\item[$\bullet$] $F_\infty^*\gamma=(-1)^p\gamma$;
\item[$\bullet$] the wave-front set of $\gamma$ is included in $N_{Z/X(\bbC)}\otimes_{\bbR}\bbC.$
\end{itemize}
Here $N$ is the conormal 
bundle of $Z$ in  $X(\bbC)$, where $Z$ and $X(\bbC)$ are viewed as real differentiable manifolds.

In the same way as above, we then define the $\bbR$-vector spaces 
% IMPORTANT %%%%%%%%%
% clarifiy (2i\pi) twist discrepancy
%%%%%%%%%%%%%%%%%%
$$
\widetilde{D}^{p,p}_Z(X_\bbR):=D^{p,p}_Z(X_\bbR)/D^{p,p}_{Z,0}(X_\bbR)
$$
where 
$D^{p,p}_{Z,0}(X_\bbR)$ is the set of currents $\gamma\in D^{p,p}_Z(X_\bbR)$ with the following property: there exists
a current $\alpha$ of type $(p-1,p)$ and a current $\beta$ of type $(p,p-1)$  such that $\gamma:=\partial\alpha+\bar\partial\beta$ and such that the 
wave-front sets of $\alpha$ and $\beta$ are included in the complexified conormal bundle of $Z$ in $X(\bbC)$.

See \cite{Hoermander-Analysis} for the definition (and theory) of the wave-front set.

It is a consequence of  \cite[Cor. 4.7]{Burgos-Litcanu-Singular} that the natural morphism $\widetilde{D}^{p,p}_Z(X_\bbR)\to \widetilde{D}^{p,p}(X_\bbR)$ is an injection.\footnote{many thanks to J.-I. Burgos for bringing this to our attention} 

Furthermore, it is a consequence of \cite[Th. 4.3]{Burgos-Litcanu-Singular} that for any $R$-morphism $f:Y\to X$ of arithmetic varieties, there is a natural morphism 
of $\bbR$-vector spaces $$f^*:\widetilde{D}^{p,p}_Z(X_\bbR)\to \widetilde{D}^{p,p}_{f(\bbC)^*(Z)}(Y_\bbR),$$ provided 
$f(\bbC)$ is transverse to $Z$. This morphism extends the morphism  $\widetilde{A}^{p,p}(X_\bbR)\to \widetilde{A}^{p,p}(Y_\bbR)$, 
which is obtained by pulling back differential forms.

We shall write $H^{*}_{D,\an}(X,\bbR(\cdot))$ for the analytic real Deligne cohomology of $X(\bbC)$.
By definition,
$$
H^{q}_{D,\an}(X,\bbR(p)) := {H}^q(X(\bbC),\bbR(p)_{D,\an})
$$
where $\bbR(p)_{D,\an}$ is the complex of sheaves of $\bbR$-vector spaces
$$
0\to\bbR(p)\to\Omega^1_{X(\bbC)}\stackrel{d\;}{\to}\Omega^1_{X(\bbC)}\to\dots\to\Omega^{p-1}_{X(\bbC)}\to 0
$$
on $X(\bbC)$ (for the ordinary topology). Here $\bbR(p):=(2i\pi)^p\,\bbR\subseteq\bbC$ and 
$\Omega^i_{X(\bbC)}$ is the sheaf of holomorphic forms of degree $i$. 
Notice that there is morphism of complexes $\bbR(p)_{D,\an}\to\bbR(p)$, 
 where $\bbR(p)$ is viewed as a complex of sheaves with a single entry in degree $0$. This morphism of 
 complexes induces maps 
 $$
\phi_B:H^{\ast}_{D,\an}(X,\bbR(\cdot))\to H^{\ast}(X(\bbC),\bbR(\cdot))
$$
from analytic Deligne cohomology into Betti cohomology. 
We now define
$$
H^{2p-1}_{D,\an}(X_\bbR,\bbR(p)):=\{\gamma\in H^{2p-1}_{D,\an}(X,\bbR(p))\;|\;F_\infty^*\gamma=(-1)^p\gamma\}.
$$
It is shown in \cite[par 6.1]{Burgos-Wang-Higher} that there is a natural inclusion
\begin{equation}
H^{2p-1}_{D,\an}(X_\bbR,\bbR(p))\hookrightarrow \widetilde{A}^{p,p}(X_\bbR).
\label{incl}
\end{equation}
There is a cycle class map $\cyc_\an:H^{*}_\sM(X,\bullet)\to H^{*}_{D,\an}(X_\bbR,\bbR(\bullet))$.  The composition $\phi_{\dR}\circ \cyc_\an$ is the 
usual cycle class map (or "regulator") into Betti cohomology. Finally there is a canonical exact sequence 
\begin{equation}
H^{2p-1}_\sM(\cA,p)_\bbQ
\xrightarrow{{\rm cyc}_\an}\widetilde{A}^{p-1,p-1}(\cA_\bbR){\to}\ari{\rm CH}^p(\cA)_\bbQ\to
{\rm CH}^p(\cA)_\bbQ\to 0
\label{arfundseq}
\end{equation}
where (abusing language) the map $\cyc_\an$ is defined via the inclusion \ref{incl}. The group \mbox{${\rm CH}^p(\cA)$} is the $p$-th Chow group 
of $X$ (see the book \cite{Fulton-Intersection}) and $\ari{\rm CH}^\bullet(\cdot)$ is the arithmetic Chow group of Gillet-Soul\'e (see \cite{Gillet-Soule-Arithmetic}). The group $\ari{\rm CH}^\bullet(\cdot)$ 
is contravariantly functorial for any $R$-morphisms of arithmetic 
varieties and covariantly functorial for smooth projective morphisms. 

Let now as before $\pi_\cA=\pi:{\mathcal A}\to S$ be an abelian scheme over $S$ of relative dimension $g=g_\cA$. We shall write
as usual ${\mathcal A}^\vee\to S$ for the dual abelian scheme. We let as before $\epsilon_\cA=\epsilon$ be the zero-section of ${\mathcal A}\to S$. We shall denote by $S_0=S_{0,\cA}=\epsilon_\cA(S)$ the (reduced) closed subscheme of 
$\bbCA$, which is the image of $\epsilon_\cA$. We write ${\mathcal P}$ for the Poincar\'e bundle on ${\mathcal A}\times_S{\mathcal A}^\vee$.
We endow ${\mathcal P}(\bbC)$ with the unique
metric $h_{\mathcal P}$ such that the canonical rigidification of ${\mathcal P}$ along the
zero-section ${\mathcal A}^\vee\to{\mathcal A}\times_S{\mathcal A}^\vee$ is an isometry and such that
the curvature form of $h_{\mathcal P}$ is translation invariant along the fibres of the map
\mbox{${\mathcal A}(\bbC)\times_{S(\bbC)}{\mathcal A}^\vee(\bbC)\to{\mathcal A}^\vee(\bbC)$.} We write $\overline{{\mathcal P}}:=({\mathcal P},h_{\mathcal P})$ for
the resulting hermitian line bundle. See \cite[chap I, 4 and chap. I]{Moret-Bailly-Pinceaux} 
for more details on all this. 

The following is Th. 1.1 in  \cite{Loopaper}.
\begin{theorem}[Maillot-R.]
There is a unique class of currents $\mfg_{{\mathcal A}^\vee}\in \widetilde{D}^{g-1,g-1}({\mathcal A}_\bbR)$ with the
following three properties:
\begin{itemize}
\item[\rm (a)] Any element of $\mfg_{{\mathcal A}^\vee}$ is a Green current for $S_0(\bbC)$.
\item[\rm (b)] The identity
$
(S_0,\mfg_{{\mathcal A}^\vee})=(-1)^g p_{1,*}(\ari{\ch}(\overline{{\mathcal P}}))^{(g)}
$
 holds in $\ari{\rm CH}^g({\mathcal A})_\bbQ$.
\item[\rm (c)]  The identity $\mfg_{{\mathcal A}^\vee}=[n]_*\mfg_{{\mathcal A}^\vee}$ holds for all $n\geqslant 2$.
\end{itemize}
\label{mainth0}
\end{theorem}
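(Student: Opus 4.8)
The plan is to read off existence and uniqueness from the fundamental exact sequence \eqref{arfundseq} for arithmetic Chow groups together with the Deninger--Murre decomposition: conditions (a) and (b) will cut out a non-empty torsor under a $\bbQ$-vector space $\mathcal K$, and condition (c) will then pick out its unique fixed point under $[2]_*$. To begin, set $\beta:=(-1)^g\,p_{1,*}(\ari{\ch}(\overline{{\mathcal P}}))^{(g)}\in\ari{\rm CH}^g({\mathcal A})_\bbQ$, the right-hand side of (b), where $p_1\colon{\mathcal A}\times_S{\mathcal A}^\vee\to{\mathcal A}$ is the first projection, a smooth projective morphism so that $p_{1,*}$ is defined on $\ari{\rm CH}^\bullet$. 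Its image in ${\rm CH}^g({\mathcal A})_\bbQ$ is $(-1)^g p_{1,*}(\ch({\mathcal P}))^{(g)}$, and by the classical computation of the Fourier--Mukai transform of the Poincaré bundle --- equivalently, $Rp_{1,*}{\mathcal P}\cong\epsilon_*{\mathcal O}_S[-g]$ together with Grothendieck--Riemann--Roch, where one uses that $\Omega_{p_1}$ is pulled back from ${\mathcal A}$ so that the Todd contribution is harmless --- this image is the class $[S_0]$ of the zero-section.

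Let $T$ be the set of classes $\mfg\in\widetilde D^{g-1,g-1}({\mathcal A}_\bbR)$ all of whose representatives are Green currents of logarithmic type for $S_0(\bbC)$ and which satisfy $(S_0,\mfg)=\beta$ in $\ari{\rm CH}^g({\mathcal A})_\bbQ$; by construction $T$ is exactly the set of $\mfg$ satisfying (a) and (b). Starting from any Green current for $S_0(\bbC)$ and correcting it by a smooth form, exactness of $\widetilde A^{g-1,g-1}({\mathcal A}_\bbR)\to\ari{\rm CH}^g({\mathcal A})_\bbQ\to{\rm CH}^g({\mathcal A})_\bbQ\to0$ together with the previous paragraph shows $T\neq\emptyset$; and since two Green currents of log type for the same cycle differ by a class in $\widetilde A^{g-1,g-1}({\mathcal A}_\bbR)$, the set $T$ is a torsor under $\mathcal K:=\ker(\widetilde A^{g-1,g-1}({\mathcal A}_\bbR)\to\ari{\rm CH}^g({\mathcal A})_\bbQ)$, which by \eqref{arfundseq} coincides with $\cyc_\an(H^{2g-1}_\sM({\mathcal A},g)_\bbQ)$.

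Now I bring in the multiplication maps. The morphisms $p_1$ and $[n]$ fit into a Cartesian square with top arrow $[n]\times\id$, and $([n]\times\id)^*\overline{{\mathcal P}}\cong\overline{{\mathcal P}}^{\otimes n}$ \emph{isometrically}, since the rigidification along the zero-section and the translation invariance of the curvature --- the properties characterising $h_{{\mathcal P}}$ --- are preserved by $[n]\times\id$. A short computation using $([n]\times\id)_*([n]\times\id)^*=n^{2g}$ then gives $[n]_*\beta=\beta$ for all $n\ge2$. As moreover $[n]_*S_0=S_0$ and $[n]_*$ sends Green currents of log type for $S_0$ to Green currents of log type for $S_0$, it follows that $[n]_*$ maps $T$ into $T$ and acts on $\mathcal K$ through the $[n]_*$-equivariant map $\cyc_\an$ (regulators commute with proper push-forward). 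At this point Proposition \ref{decomp-prop} is decisive: the weight-$0$ summand $H^{2g-1}_\sM({\mathcal A},g)^{(0)}$ is $\epsilon_*$ of a negative-degree motivic cohomology group of $S$, hence vanishes, so $\mathcal K=\bigoplus_{r\ge1}\mathcal K^{(r)}$ with $[n]_*$ acting as $n^r$ on $\mathcal K^{(r)}$; in particular $[2]_*-\id$ is invertible on $\mathcal K$.

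It remains to conclude by a fixed-point argument. The map $[2]_*\colon T\to T$ is an affine self-map of the $\mathcal K$-torsor $T$ with invertible linear part, so it has a unique fixed point $\mfg_*\in T$. For every $n\ge2$ the map $[n]_*$ also preserves $T$ and commutes with $[2]_*$, whence $[n]_*\mfg_*$ is again fixed by $[2]_*$ and must equal $\mfg_*$; thus $\mfg_*$ satisfies (c) as well. Conversely, any $\mfg$ satisfying (a), (b) and (c) lies in $T$ and is fixed by $[2]_*$, hence equals $\mfg_*$. Setting $\mfg_{{\mathcal A}^\vee}:=\mfg_*$ finishes the proof. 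The step I expect to be the main obstacle is the input of the first paragraph, together with the refinement $[n]_*\beta=\beta$ in the third: identifying $p_{1,*}(\ch({\mathcal P}))^{(g)}$ with $(-1)^g[S_0]$ and tracking the metric on ${\mathcal P}$ through $[n]\times\id$ is where the genuine geometric content lies, the rest being the bookkeeping of Green currents and arithmetic Chow groups plus the eigenvalue count from Deninger--Murre. Note that the Bismut--K\"ohler higher analytic torsion form of $\overline{{\mathcal P}}$ plays no role in this abstract existence-and-uniqueness statement; it enters only when one wants an explicit representative of $\mfg_{{\mathcal A}^\vee}$.
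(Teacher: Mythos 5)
The paper itself offers no proof of Theorem \ref{mainth0}: it is quoted verbatim from \cite{Loopaper} (Th.\ 1.1 there), so there is no in-paper argument to compare yours against. Judged on its own terms, your reconstruction is essentially sound, and its two genuinely geometric inputs are the right ones: the identity $p_{1,*}(\ch({\mathcal P}))^{(g)}=(-1)^g[S_0]$ in ${\rm CH}^g(\cA)_\bbQ$ (Mukai plus GRR, with the Todd class only contributing through its degree-zero part once you extract the degree-$g$ component), and the isometry $([n]\times\id)^*\overline{{\mathcal P}}\cong\overline{{\mathcal P}}^{\otimes n}$, which via the projection formula applied to the degree-$2g$ part of $\ari{\ch}(\overline{{\mathcal P}})$ gives $[n]_*\beta=\beta$. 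Your packaging of the rest — the set cut out by (a) and (b) as a torsor under $\mathcal K=\cyc_\an(H^{2g-1}_\sM(\cA,g))$ via the sequence \eqref{arfundseq}, then a unique fixed point of $[2]_*$, with all $[n]_*$ forced to fix it by commutation — is a legitimate variant of the argument in \cite{Loopaper}, with the uniqueness run through the Deninger--Murre weight decomposition rather than on the Hodge-theoretic side.

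Two points need repair or explicit justification. First, drop ``of logarithmic type'' from the definition of $T$: condition (a) only asks for Green currents, and ``every representative is of log type'' is not stable under adding $\partial u+\bar\partial v$ (indeed no class has this property), so as written $T$ is not ``exactly the set of $\mfg$ satisfying (a) and (b)'' and the converse step of your uniqueness argument would not apply to a competitor satisfying (a)--(c). What you actually use, and all you need, is the standard fact that two Green currents for the same cycle differ by a smooth form modulo $\im\,\partial+\im\,\bar\partial$; with that, the honest set cut out by (a) and (b) is a $\mathcal K$-torsor and your argument goes through unchanged. Second, your ``hence vanishes'' for the weight-zero summand is the assertion $H^{-1}_\sM(S,0)=\Gr^0_\gamma K_1(S)_\bbQ=0$; this is true (the known weight-zero case of Beilinson--Soul\'e-type vanishing) but it is not part of Proposition \ref{decomp-prop} and should be stated with a reference — or avoided entirely by observing that $\mathcal K$ lies in the image of $H^{2g-1}_{D,\an}(\cA_\bbR,\bbR(g))$, on which $[2]_*$ has (generalized) eigenvalues $2^{2g-q}$ with $q\leqslant 2g-1$, hence all of the form $2^r$ with $r\geqslant 1$, so that $[2]_*-\id$ is invertible on $\mathcal K$ without any motivic input. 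Your closing remark is also accurate: the Bismut--K\"ohler torsion form is irrelevant to this existence-and-uniqueness statement and only enters in identifying an explicit representative of $\mfg_{\cA^\vee}$.
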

Here the morphism $p_1$ is the first projection ${\mathcal A}\times_S{\mathcal A}^\vee\to{\mathcal A}$ and
$[n]:{\mathcal A}\to{\mathcal A}$ is the multiplication-by-$n$ morphism. See \cite[1.2]{Gillet-Soule-Arithmetic} for the notion of Green current. 
The symbol $\ari{\ch}(\bullet)$ refers to the arithmetic Chern character, which has values in 
arithmetic Chow groups; see \cite{Gillet-Soule-Characteristic} for this. By $(\cdot)^{(g)}$ we mean the degree $g$ part of $(\cdot)$ in the 
natural grading of the arithmetic Chow group. 

In \cite{Loopaper} it is also shown that the restriction $(-1)^{g+1}\mfg_{{\mathcal A}^\vee}|_{\cA(\bbC)\backslash \epsilon(\bbC)}$ is equal to the part of degree $(g-1,g-1)$ of the Bismut-Koehler
 higher analytic torsion of $\overline{\mathcal P}$ along the map 
 $\cA(\bbC)\times\cA^\vee(\bbC)\to\cA(\bbC)$, for some natural choices of 
 K\"ahler fibration structures on $\cA(\bbC)\times\cA^\vee(\bbC)$. 

%Let now ${\bbCL}$ be a rigidified line bundle on $\bbCA^\vee$. Endow
%$\bbCL$ with the unique hermitian metric $h_\bbCL$, which is compatible with the rigidification and whose curvature form is translation-invariant on the fibres
%of $\bbCA^\vee(\bbC)\to S(\bbC)$. Let $\overline{\bbCL}:=(\bbCL,h_\bbCL)$ be the resulting hermitian line bundle. Let
%$\phi_\bbCL:\bbCA^\vee\to\bbCA$ be the polarisation morphism induced by $\bbCL$.
%
%\begin{theorem}
% Suppose that $\mathcal L$ is ample relatively to $S$ and symmetric.  Then the equalities
%$$
%(S_0,\mfg_{\bbCA^\vee})=(-1)^g p_{1,*}(\ari{\ch}(\overline{{\mathcal P}}))={1\over g!\sqrt{\deg(\phi_\bbCL)}}\phi_{{\mathcal L},*}(\ac1(\overline{{\mathcal L}})^g)
%$$
%are verified in $\ari{\rm CH}^g(\bbCA)_{\bbQ}$.
%\label{mth1}
%\end{theorem}

Furthermore (see \cite[Introduction]{Loopaper}),  the following is known:

- The class of currents $\mfg_{\cA^\vee}$ lies in $\widetilde{D}^{g-1,g-1}_{S_0(\bbC)}(\cA_\bbR)$. 

- Let $T$ be a an arithmetic variety over $R$ and $T\to S$ be a morphism of schemes over $R$. Let 
$\cA_T$ be the abelian scheme obtained by base-change and let ${\rm BC}:\cA_T\to \cA$ be the corresponding 
morphism. Then ${\rm BC}(\bbC)$ is tranverse to $S_0(\bbC)$ and ${\rm BC}^*\mfg^\vee_\cA=\mfg_{\cA_T}^\vee$.

%We shall write $H^q_D(X,\bbR(p))$ for the Deligne-Beilinson cohomology of $X(\bbC)$. See 
%\cite[chap. 10]{Burgos-The-regulators} for the definition. Furthermore, we define
%$$
%H^q_D(X_\bbR,\bbR(p)):=\{\gamma\in H^q_D(X(\bbC),\bbR(p))\;|\;F^{*}_{\infty}\gamma=(-1)^p\gamma\}.
%$$
%There is a natural arrow $H^q_D(X_\bbR,\bbR(p))\to H^q_{D,\an}(X_\bbR,\bbR(p))$, which 
%is an isomorphism if $X$ is proper over $k$. See \cite[chap. 10]{Burgos-The-regulators}. 
%
%Finally, there is a natural arrow $H^q_\cH(X,\bbR(p))\to H^q_{D}(X_\bbR,\bbR(p)).$ 
%See ... . 

We now suppose that the field $k$ is embeddable into $\bbC$ and we set $R=k$. 
We choose an arbitrary conjugation-invariant set of embeddings of $R$ into $\bbC$. 

 Fix once an for all $a\in\bbZ$ such that $(a,N)=1$.  
 Recall that by Corollary \ref{corfund}, we have an isomorphism 
\begin{equation}
\label{fundis}
\rho_\cA:H^{2g-1}_\sM({\mathcal A}\backslash{\mathcal A}[N],g)^{(0)}\isom
H^0_\sM({\mathcal A}[N]\backslash\epsilon_\cA(S),0)^{(0)}
\end{equation}
and that the element $\pol_{\sM,1_N^\circ,\cA,a}^0$ is the unique element of $H^{2g-1}_\sM({\mathcal A}\backslash{\mathcal A}[N],g)^{(0)}$ mapping 
to $1_N^\circ$ under this isomorphism. Recall that $1_N^\circ\in H^0_\sM({\mathcal A}[N]\backslash\epsilon_\cA(S),0)$ is the element given by 
the formal sum of all the irreducible components of ${\mathcal A}[N]\backslash\epsilon_\cA(S)$. 
Recall also that we have 
$$
H^{2g-1}_\sM({\mathcal A}\backslash{\mathcal A}[N],g)^{(0)}:=
\{h\in H^{2g-1}_\sM({\mathcal A}\backslash{\mathcal A}[N],g)\ |\ \exists l\geqslant 1:\ (\tr_{[a]}-\Id)^l(h)=0\}
$$
where $\tr_{[a]}$ is described in Definition \ref{deftra}, and
$$
H^0_\sM({\mathcal A}[N]\backslash\epsilon_\cA(S),0)^{(0)}:=
\{h\in H^0_\sM({\mathcal A}[N]\backslash\epsilon_\cA(S),0)\ |\ \ [a]_*(h)=h\}.
$$

 \begin{theorem}
 We have 
 $$-2\cdot \cyc_\an(\pol_{\sM,1_N^\circ,\cA,a}^0)=([N]^*\mfg_{\cA^\vee}-N^{2g}\cdot \mfg_{\cA^\vee})|_{\cA\backslash\cA[N]}.$$
 
 Furthermore, the map $$H^{2g-1}_{\mathcal M}(\cA\backslash\cA[N],g)^{(0)}\to 
 H^{2g-1}_{D,\an}((\cA\backslash\cA[N])_\bbR,\bbR(g))$$ induced by $\cyc_\an$ is injective.
 \label{polg}
 \end{theorem}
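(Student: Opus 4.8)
The plan is to follow the reduction strategy sketched in the introduction, proceeding in three stages. First I would prove a \textbf{fibrewise criterion}: if $s_0 \in S$ is a closed point with residue field embeddable into $\bbC$, and the identity $-2\cdot\cyc_\an(\pol^0_{\sM,1_N^\circ,\cA_{s_0},a}) = ([N]^*\mfg_{\cA_{s_0}^\vee} - N^{2g}\mfg_{\cA_{s_0}^\vee})|_{\cA_{s_0}\backslash\cA_{s_0}[N]}$ holds on the fibre $\cA_{s_0}$, then it holds for $\cA\to S$. The key inputs are: (i) the motivic polylogarithm is compatible with base change (the Proposition after Definition \ref{mot-pol-def}), (ii) the current $\mfg_{\cA^\vee}$ satisfies $\BC^*\mfg_{\cA^\vee} = \mfg_{\cA_T^\vee}$ under the base change $\BC\colon\cA_T\to\cA$ (quoted from \cite{Loopaper}), (iii) $\cyc_\an$ commutes with pullback, and (iv) the injectivity of the restriction map $\widetilde{A}^{g-1,g-1}(\cA_\bbR)\hookrightarrow\cdots$ needs to be replaced by something genuinely useful: I would instead use that the specialisation at $s_0$ of a class in $H^{2g-1}_{D,\an}((\cA\backslash\cA[N])_\bbR,\bbR(g))$, together with the variation over $S$, is detected by restriction to all closed fibres — more precisely, I would argue that the difference of the two sides, a priori an element of $\widetilde{A}^{g-1,g-1}$, is translation-invariant along the fibres of $\cA\to S$ (both $\mfg_{\cA^\vee}$ and $\cyc_\an(\pol^0)$ have this property by construction/norm-compatibility), hence is determined by its restrictions to fibres. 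This last point, together with \eqref{incl}, is really the crux of stage one.

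Second I would carry out the \textbf{reduction to the universal family}. Since $S$ is smooth irreducible quasi-projective over $k\subseteq\bbC$, after replacing $\cA$ by a suitable isogenous or level-$N$-rigidified model there is a map from (an \'etale cover of) $S$ to a moduli space $\sModular(N)$ of principally polarised abelian varieties with level-$N$ structure, classifying $\cA$. By base-change compatibility of both sides, it suffices to prove the theorem for the universal abelian scheme over $\sModular(N)$ (or over a connected component thereof, defined over a number field hence over $\bbC$). Here I would invoke stage one: it is enough to check the identity on a single well-chosen closed fibre of the universal family.

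Third comes the \textbf{computation on a product of elliptic curves}. The moduli space $\sModular(N)$ contains points corresponding to $E_1\times\cdots\times E_g$ with each $E_i$ an elliptic curve with full level-$N$ structure. For such a product one has $\pol^0_{\cA} $ expressible via external products of the elliptic polylogarithms $\pol^0_{E_i}$ (using the K\"unneth/product formula for $\sLog$ and the multiplicativity of the residue isomorphisms of Corollary \ref{residue-isom}), and correspondingly $\mfg_{\cA^\vee}$ decomposes since the Poincar\'e bundle of a product is the external sum of the Poincar\'e bundles $\mathcal P_{E_i}$, so $\ari\ch(\ol{\mathcal P})$ and hence $p_{1,*}(\ari\ch(\ol{\mathcal P}))^{(g)}$ factor through the $\star$-product of the elliptic currents $\mfg_{E_i^\vee}$. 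The genus-one case is classical: there $\pol^0_{E}$ in Deligne cohomology is given by the real-analytic Eisenstein--Kronecker series / the elliptic polylogarithm current, and $\mfg_{E^\vee}$ is, up to the known normalisation, the Bismut--K\"ohler torsion of $\ol{\mathcal P}_E$, which Maillot--R.\ computed explicitly; matching the two (including the factor $-2$) is a direct though tedious comparison of the Kronecker theta function expansions. I would then feed these two product decompositions into each other and read off the factor $-2^g$ versus $-2$ discrepancy-free identity by induction on $g$, the base case $g=1$ being the explicit elliptic computation.

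For the second assertion — \textbf{injectivity} of $\cyc_\an$ on $H^{2g-1}_\sM(\cA\backslash\cA[N],g)^{(0)}$ — I would argue as follows. By Corollary \ref{corfund} the source is one-dimensional over $\bbQ$ after tensoring (it is identified with $H^0_\sM(\cA[N]\backslash\epsilon(S),0)^{(0)}$, which contains the fundamental class), and more generally any class in the eigenspace is a $\bbQ$-multiple of $\pol^0_{\sM,1_N^\circ,\cA,a}$; indeed the $\tr_{[a]}$-eigenspace decomposition of Proposition \ref{decomp-prop} pins it down. Hence it suffices to show $\cyc_\an(\pol^0_{\sM,1_N^\circ,\cA,a})\neq 0$, and this follows from the first part of the theorem: the right-hand side $([N]^*\mfg_{\cA^\vee}-N^{2g}\mfg_{\cA^\vee})|_{\cA\backslash\cA[N]}$ is nonzero, as one sees already on an elliptic fibre where it is a nonvanishing real-analytic Eisenstein current (equivalently, its restriction to a torsion section computes a nonzero special value, or its class in $\widetilde A^{g-1,g-1}$ maps to a nonzero element under \eqref{incl}). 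So injectivity is a corollary of the explicit identity plus the one-dimensionality coming from the motivic decomposition.

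The main obstacle I expect is stage one, the fibrewise-to-total reduction: analytic Deligne cohomology genuinely lacks residue maps and good functoriality in families, so the argument that the difference of the two sides is translation-invariant along the fibres and therefore detected on closed points has to be made carefully, exploiting property (c) of Theorem \ref{mainth0} and the norm-compatibility Proposition \ref{pol-norm-comp} together with the injection \eqref{incl} and the transversality statement for $\BC^*$ on $\widetilde D^{p,p}_Z$. The elliptic base case in stage three is, by contrast, standard but requires bookkeeping to get the normalising constant $-2$ exactly right.
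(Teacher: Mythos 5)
Your overall strategy (reduce to the universal family, then to a fibre which is a product of elliptic curves, with an explicit elliptic computation at the base) is the right one, but two of your key steps do not work as stated. The crux of stage one is not a "translation-invariance, hence detected on fibres" argument in analytic Deligne cohomology — no such detection principle is established, and you only ever check one fibre, so even if restriction to \emph{all} fibres were faithful this would not suffice. What actually makes the single-fibre reduction work in the paper is an Arakelov-theoretic lifting step that is entirely absent from your proposal: using the exact sequence \ref{arfundseq} relating $H^{2p-1}_\sM$, $\widetilde A^{p-1,p-1}$ and $\ari{\rm CH}^p$, one shows (Lemma \ref{eqal}) that $(\sigma^*\mfg_{\cA^\vee}-\mfg_{\cA^\vee})$ restricted to the open part already lies in $\cyc_\an\bigl(H^{2g-1}_\sM(\cdot,g)^{(0)}\bigr)$; this follows from the vanishing of $\sigma^*(S_0,\mfg_{\cA^\vee})-(S_0,\mfg_{\cA^\vee})$ in $\ari{\rm CH}^g(\cA)_\bbQ$, proved via $\ari\ch(\overline\Sigma)=1$ for the rigidified torsion line bundle and the projection formula for $\overline{\mathcal P}\otimes\overline\Sigma$. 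Once both sides are \emph{motivic} classes in the $(0)$-eigenspace, one compares them through the injective map $\cyc_\an\circ\BC_h^*$ to a single fibre with full level structure (Lemmata \ref{isolem}, \ref{implem}), the point being that $\BC_h^*$ is an isomorphism on the $(0)$-eigenspace because the residue isomorphism $\rho_\cA$ is insensitive to base change. Also note the paper works section by section ($\pol_{-\sigma}$ versus $\sigma^*\mfg_{\cA^\vee}-\mfg_{\cA^\vee}$, Proposition \ref{polgpr}) and only at the very end sums over $\sigma\in\cA[N](S)$ using $[N]_*\mfg_{\cA^\vee}=\mfg_{\cA^\vee}$ (Lemma \ref{prothlink}); correspondingly the product step is \emph{additive} — $z=(\Id\times\tau)_*x+(0\times\Id)_*y$, via the $\ast$-product of Green currents and Proposition \ref{rdres-prop} — not a K\"unneth external product, which is why the normalisation stays $-2$ and no "$-2^g$ versus $-2$" bookkeeping arises; your multiplicative induction on $g$ would not reproduce this.

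Your argument for the injectivity assertion is also flawed, and in a way that matters for the logic: $H^0_\sM(\cA[N]\backslash\epsilon(S),0)^{(0)}$ is \emph{not} one-dimensional (with $a\equiv 1\bmod N$ the operator $[a]_*$ is the identity on it, so when $\cA[N]$ is split its dimension is $N^{2g}-1$), so "nonvanishing of $\cyc_\an(\pol^0)$ implies injectivity" fails. Moreover the injectivity cannot be deduced from the identity, because it is an \emph{input} to the proof of the identity (it is what makes $\cyc_\an\circ\BC_h^*$ injective in Lemma \ref{implem}). The paper proves it directly (Lemma \ref{injlem}): compose $\cyc_\an$ with the map $\phi_B$ to Betti cohomology and with the Betti residue isomorphism (Bloch–Ogus plus Corollary \ref{corfund}); the resulting map agrees with $\cyc_\an\circ\rho_\cA$ landing in $H^0$ of the torsion scheme, where it is visibly injective. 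You need an argument of this kind, independent of the main identity.
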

 
 (Theorem \ref{polg} is identical to Theorem \ref{polgg} in the introduction)
 
 The proof of Theorem \ref{polg} will occupy the next subsection.
 
 The proof goes as follows. We first prove the basic fact that  
 $$
 ([N]^*\mfg_{\cA^\vee}-N^{2g}\cdot \mfg_{\cA^\vee})|_{\cA\backslash\cA[N]}\in\cyc_\an(H^{2g-1}_{\mathcal M}(\cA\backslash\cA[N],g)^{(0)})
 $$
 (see Lemma \ref{eqal}). This is where Arakelov theory and the geometry of the Poincar\'e bundle play an important role.
 Next we show how the elements $\pol_{\sM,1_N^\circ,\cA,a}^0$ and $([N]^*\mfg_{\cA^\vee}-N^{2g}\cdot \mfg_{\cA^\vee})|_{\cA\backslash\cA[N]}$ 
 behave under products (see Lemmata \ref{mot-lem} and \ref{green-lemma}). To determine the behaviour of $\pol_{\sM,1_N^\circ,\cA,a}^0$ 
 under products, we need some elementary invariance results of residue maps in motivic cohomology and to determine the behaviour of $([N]^*\mfg_{\cA^\vee}-N^{2g}\cdot \mfg_{\cA^\vee})|_{\cA\backslash\cA[N]}$ under products, we need parts of the Gillet-Soul\'e calculus of Green currents.  
 In our third step, we prove that  
 Theorem \ref{polg} holds when $\cA$ is an elliptic curve; this follows from some classical results in the 
 theory of elliptic units (see Lemma \ref{ellpart}). In our fourth and final step, we show that Theorem \ref{polg} 
 holds for products of elliptic curves (see Lemma \ref{ellpart}) and we use a deformation argument 
 together with the existence of moduli spaces for polarised abelian varieties to reduce the general case 
 to the case of products of elliptic curves (see subsubsection \ref{ssspolg}). 
 
\begin{remark} (important) At first sight, it might seem that a natural way to tackle a statement like Theorem \ref{polg} is to check that the elements $-2\cdot \cyc_\an(\pol_{\sM,1_N^\circ,\cA,a}^0)$ and \mbox{$([N]^*\mfg_{\cA^\vee}-N^{2g}\cdot \mfg_{\cA^\vee})|_{\cA\backslash\cA[N]}$} have the same residue and are both norm invariant (ie $\tr_{[a]}$-invariant). One could then argue that a norm invariant element is completely 
determined by its residue to conclude. This line of proof does not work because there are no localisation sequences in 
analytic Deligne cohomology (unlike in Deligne-Beilinson cohomology). This is why we have to resort to the more complicated 
deformation argument outlined above, together with some explicit computations.
\end{remark}
 
 \subsection{Proof of Theorem \ref{polg}} 

We shall suppose without restriction of generality that $S$ is connected. We also suppose without restriction of generality that $a\equiv 1\ ({\rm mod}\ N)$. 
To see that this last hypothesis does not restrict generality, notice first that 
$\pol_{\sM,1_N^\circ,\cA,a}=\pol_{\sM,1_N^\circ,\cA,a^l}$ for all $l\geqslant 1$. Thus we may replace 
$a$ by some $a^l$ and since $(\bbZ/N\bbZ)^*$ is finite there exists $l\geqslant 1$ such that $a^l\equiv 1\ ({\rm mod}\ N)$.

\subsubsection{Invariance properties of residue maps in motivic cohomology}

\label{invpropres}

In this paragraph, we shall state certain elementary invariance properties of residue maps (see below for the definition of this term) 
in motivic cohomology. These invariance properties play a key role in the proof of Theorem \ref{polg}.

\begin{proposition}\label{torin-prop}
Let 
$$
\xymatrix{
X\ar@{^{(}->}[r]^i &Y\\
X_0\ar[u]^{f}\ar@{^{(}->}[r]^l & Y_0\ar[u]^{g}}
$$
be a cartesian diagram of smooth schemes over a field. Suppose that the horizontal 
morphisms are closed immersions. Let $c\in\bbN$ and suppose that the codimension 
of $X$ in $Y$ (resp. $X_0$ in $Y_0$) is $c$. Let $U:=Y\backslash X$ and $U_0:=Y_0\backslash X_0$. Then 
we have a commutative diagram of localisation sequences
$$
\xymatrix{
\dots\ar[r]&H_\sM^{\bullet-2c}(X,\ast-c)\ar[r]\ar[d] & H^\bullet_\sM(Y,\ast)\ar[r]\ar[d] & H^\bullet_\sM(U,\ast)\ar[r]\ar[d] & H_\sM^{\bullet+1-2c}(X,\ast-c)\ar[r]\ar[d]&\dots\\
\dots\ar[r]&H_\sM^{\bullet-2c}(X_0,\ast-c)\ar[r] & H^\bullet_\sM(Y_0,\ast)\ar[r] & H^\bullet_\sM(U_0,\ast)\ar[r] & H_\sM^{\bullet+1-2c}(X_0,\ast-c)\ar[r]&\dots
}
$$
where the vertical maps are the pull-back maps.
\end{proposition}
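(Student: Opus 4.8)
The plan is to recognize both rows as the long exact sequences of motivic cohomology \emph{with supports}, and then to combine the standard contravariant functoriality of such a sequence with the base-change compatibility of the purity isomorphism. For a closed immersion $i\colon X\hookrightarrow Y$ of smooth schemes of codimension $c$ with open complement $U$, the localization sequence in question is --- exactly as in the proof of Lemma \ref{tr-and-loc-seq} --- the long exact sequence
$$
\cdots\to H^\bullet_{X,\sM}(Y,\ast)\to H^\bullet_\sM(Y,\ast)\to H^\bullet_\sM(U,\ast)\to H^{\bullet+1}_{X,\sM}(Y,\ast)\to\cdots
$$
of cohomology with supports in $X$, together with the purity (Gysin) isomorphism $H^\bullet_{X,\sM}(Y,\ast)\isom H^{\bullet-2c}_\sM(X,\ast-c)$, which is part of the Bloch--Ogus axioms. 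Since the square is Cartesian one has $g^{-1}(X)=X_0$, hence $U_0=g^{-1}(U)$, so $g$ restricts to the given morphism $f\colon X_0\to X$ and to a morphism $U_0\to U$.

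First I would invoke the contravariant functoriality of the supported long exact sequence: for the datum of $g\colon Y_0\to Y$ together with the closed subscheme $X_0\subseteq Y_0$ satisfying $g^{-1}(X)\subseteq X_0$, the pull-back $g^*$ on $H^\bullet_\sM(Y,\ast)$, its restriction to the open complements, and the pull-back-with-supports map $H^\bullet_{X,\sM}(Y,\ast)\to H^\bullet_{X_0,\sM}(Y_0,\ast)$ assemble into a morphism of long exact sequences. This is one of the standard axioms for a twisted Poincar\'e duality theory in the sense of Bloch--Ogus (functoriality of cohomology with supports under Cartesian squares of closed immersions), already used in the proof of Lemma \ref{tr-and-loc-seq}.

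The one substantial point is then to identify, under the two purity isomorphisms, the pull-back-with-supports map $H^\bullet_{X,\sM}(Y,\ast)\to H^\bullet_{X_0,\sM}(Y_0,\ast)$ with $f^*\colon H^{\bullet-2c}_\sM(X,\ast-c)\to H^{\bullet-2c}_\sM(X_0,\ast-c)$; equivalently, to prove commutativity of the square whose horizontal arrows are the Gysin isomorphisms and whose vertical arrows are $g^*$ and $f^*$. This is precisely the base-change compatibility of the Gysin isomorphism along the Cartesian square, which holds here because $X_0$ is smooth and of codimension \emph{exactly} $c$ in $Y_0$, so that the square is Tor-independent and no excess-intersection term appears. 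I would deduce it from \cite{Deglise-Exc} --- the same reference used earlier for the base-change compatibility of the Gysin sequence in the proof that the motivic polylogarithm commutes with base change --- or else from the projection and base-change formulas for the Gysin morphism in an oriented Borel--Moore homology theory. Granting this, the three ingredients fit together into the asserted commutative ladder of localization sequences with all vertical arrows the pull-backs.

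The main obstacle is exactly this last step: checking, or pinning down the precise reference for, the compatibility of the purity isomorphism with pull-back along the Cartesian square under the equal-codimension hypothesis. Everything else is a formal manipulation of the Bloch--Ogus axioms.
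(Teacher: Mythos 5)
Your argument is correct and is essentially the paper's own proof: the paper disposes of this proposition simply by citing \cite{Deglise-Exc}, and the key step you isolate --- compatibility of the purity (Gysin) isomorphism with pull-back along the Cartesian square when the codimensions agree, so that the excess bundle vanishes (a surjection of conormal bundles of equal rank $c$ is an isomorphism) and no excess term appears --- is precisely the content of that reference. The rest of your reduction (rewriting the localisation sequence as the sequence with supports and using Bloch--Ogus functoriality) is the same formal bookkeeping implicit in the paper's citation, so nothing further is needed.
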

\begin{proof} 
See \cite{Deglise-Exc}. 
\end{proof}

\begin{proposition}\label{rdres-prop}
Let $Y$ be a smooth scheme over $k$ and let $i:X\hookrightarrow Y$ be a smooth closed subscheme. 
Let $X_0$ be a smooth closed subscheme of $X$. Let $c_X,c_Y\in\bbN$. 
Suppose that the codimension of $X_0$ in $X$ (resp. in $Y$) is $c_X$ (resp. $c_Y$). Then the diagram
$$
\xymatrix{
H^{\bullet}_\sM(X\backslash X_0,\ast)\ar[d]^{i_*}\ar[r] & H^{\bullet+1-2c_X}(X_0,\ast-c_X)\ar[d]^{=}\\
H^{\bullet+2(c_Y-c_X)}_\sM(Y\backslash X_0,\ast+c_Y-c_X)\ar[r] & H^{\bullet+1-2c_X}(X_0,\ast-c_X)
}
$$
is commutative. Here the horizontal maps are the residue maps.
\end{proposition}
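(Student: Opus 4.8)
The plan is to realise both residue maps as connecting homomorphisms for cohomology with supports, and to exhibit the whole diagram as part of a single morphism of localization long exact sequences induced by the Gysin formalism for the closed immersion $i$. Concretely, I would first rewrite the residue maps: for a closed immersion $Z\hookrightarrow W$ of smooth $k$-schemes of (pure) codimension $d$, the residue $\res\colon H^\bullet_\sM(W\backslash Z,\ast)\to H^{\bullet+1-2d}_\sM(Z,\ast-d)$ is by construction the composite
$$
H^\bullet_\sM(W\backslash Z,\ast)\xrightarrow{\ \partial\ }H^{\bullet+1}_{Z,\sM}(W,\ast)\xrightarrow[\ \isom\ ]{\ \mathrm{pur}\ }H^{\bullet+1-2d}_\sM(Z,\ast-d)
$$
of the connecting homomorphism of the localization sequence for cohomology with supports in $Z$ with the purity (deformation to the normal cone) isomorphism. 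Applied with $(Z,W)=(X_0,X)$ and with $(Z,W)=(X_0,Y)$, this identifies the two horizontal arrows of the statement with $\mathrm{pur}_X\circ\partial_X$ and $\mathrm{pur}_Y\circ\partial_Y$ respectively. Note that $i'\colon X\backslash X_0\hookrightarrow Y\backslash X_0$ is the restriction of $i$, is a closed immersion of smooth schemes of codimension $c:=c_Y-c_X$, and satisfies $i^{-1}(X_0)=X_0$.

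Next I would invoke the compatibility of Gysin push-forward along a closed immersion with the localization triangle along a closed subscheme of the source, which is part of the Gysin calculus of \cite{Deglise-Exc}. It gives a morphism of localization long exact sequences carrying $i'_*$ on the open-complement cohomologies, the Gysin push-forward $i_*$ on the absolute cohomologies, and the Gysin push-forward with supports on the cohomologies with supports in $X_0$; in particular one obtains a commutative square
$$
\xymatrix{
H^\bullet_\sM(X\backslash X_0,\ast)\ar[r]^{\partial_X}\ar[d]_{i'_*}&H^{\bullet+1}_{X_0,\sM}(X,\ast)\ar[d]^{i_*}\\
H^{\bullet+2c}_\sM(Y\backslash X_0,\ast+c)\ar[r]^{\partial_Y}&H^{\bullet+1+2c}_{X_0,\sM}(Y,\ast+c).
}
$$

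It then remains to identify the middle vertical $i_*$, transported through the purity isomorphisms $\mathrm{pur}_X\colon H^{\bullet+1}_{X_0,\sM}(X,\ast)\isom H^{\bullet+1-2c_X}_\sM(X_0,\ast-c_X)$ and $\mathrm{pur}_Y\colon H^{\bullet+1+2c}_{X_0,\sM}(Y,\ast+c)\isom H^{\bullet+1-2c_X}_\sM(X_0,\ast-c_X)$ (using $c-c_Y=-c_X$), with the identity. This is precisely the transitivity of Gysin morphisms for the tower $X_0\subseteq X\subseteq Y$ (equivalently, the associativity of the purity isomorphisms), which holds here with no excess term because $c_X+c=c_Y$, i.e. $0\to N_{X_0/X}\to N_{X_0/Y}\to N_{X/Y}|_{X_0}\to 0$ is a short exact sequence of bundles of the expected ranks; this too is established in \cite{Deglise-Exc}. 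In other words $\mathrm{pur}_Y\circ i_*=\mathrm{pur}_X$, and combining with the commutative square above we get
$$
\res\circ i'_*=\mathrm{pur}_Y\circ\partial_Y\circ i'_*=\mathrm{pur}_Y\circ i_*\circ\partial_X=\mathrm{pur}_X\circ\partial_X=\res,
$$
which is the assertion.

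As with the adjacent Proposition \ref{torin-prop}, only the formal Gysin and purity calculus enters, so the argument — and hence the statement — goes through verbatim in any twisted Bloch-Ogus theory. The main obstacle is to extract the two inputs above from \cite{Deglise-Exc} in exactly the form needed here: the compatibility of Gysin push-forwards with the localization triangle of a closed subscheme of the source, and the transitivity of Gysin morphisms for a tower of three smooth schemes. Both are standard consequences of the six-functor formalism, but must be stated for cohomology with supports rather than for absolute cohomology, which is the one point requiring care.
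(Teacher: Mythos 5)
Your argument is correct, and it fills in what the paper simply leaves as an exercise with a pointer to \cite[par.\ 6]{Jannsen-LNM1400}: the degree bookkeeping is right, the restriction $i'\colon X\backslash X_0\hookrightarrow Y\backslash X_0$ is indeed a closed immersion of codimension $c_Y-c_X$ with $i^{-1}(X_0)=X_0$, and the absence of an excess bundle for the tower $X_0\subseteq X\subseteq Y$ is exactly what makes the transitivity step clean. The only difference from the route the paper's reference suggests is one of presentation: in the Bloch--Ogus/Jannsen formulation one would dualize to homology, use that the homology localization sequences of $(X,X_0)$ and $(Y,X_0)$ form a commutative ladder under proper push-forward along $i$ (which restricts to the identity on $X_0$), and then apply Poincar\'e duality/purity to convert the boundary maps into the residue maps and the homological push-forward into the Gysin map $i_*$; your version phrases the same two formal inputs in terms of cohomology with supports, namely compatibility of Gysin push-forward with the localization triangle and transitivity of purity isomorphisms, both available in \cite{Deglise-Exc}. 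The homological phrasing buys a slightly shorter derivation (no separate transitivity-with-supports statement is needed, since the right-hand vertical map is literally the identity push-forward), while your phrasing stays entirely in cohomology and matches the way residues are used elsewhere in the paper; either is a complete proof.
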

\begin{proof}
We leave this an an exercise for the reader. See \cite[par. 6]{Jannsen-LNM1400}.
\end{proof}

Here is how Proposition \ref{torin-prop} applies in our setting.

Let $T$ be a an arithmetic variety over $R=k$ and $T\to S$ be a morphism of schemes over $R$. Let 
$\cA_T$ be the abelian scheme obtained by base-change and let ${\rm BC}:\cA_T\to \cA$ be the corresponding 
morphism. 

Let $$\BC_l:{\mathcal A}_T[N]\backslash\epsilon_{\cA_T}(S)\to {\mathcal A}[N]\backslash\epsilon_\cA(S)$$
be the morphism obtained by restricting $\BC$. Similarly, let 
$$\BC_{h}:{\mathcal A}_T\backslash{\mathcal A}_T[N]\to {\mathcal A}\backslash{\mathcal A}[N]$$ 
be the morphism obtained by restricting $\BC$.

\begin{lemma}\label{rescom}
 We have $\BC^*_l\circ\rho_\cA=\rho_{\cA_T}\circ\BC^*_h$.
\end{lemma}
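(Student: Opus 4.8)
The plan is to unwind both sides into the base-change compatibility of the localisation (Gysin) sequences furnished by Proposition \ref{torin-prop}. Recall that $\rho_\cA$ is, by Corollary \ref{corfund}, the isomorphism extracted from the residue map in the localisation sequence
$$
H^{2g-1}_\sM({\mathcal A}\backslash\epsilon_\cA(S),g)\to H^{2g-1}_\sM({\mathcal A}\backslash {\mathcal A}[N],g)\xrightarrow{\res} H^0_\sM({\mathcal A}[N]\backslash\epsilon_\cA(S),0)\to H^{2g}_\sM({\mathcal A}\backslash\epsilon_\cA(S),g),
$$
restricted to the weight-$0$ eigenspaces of $\tr_{[a]}$, where Proposition \ref{decomp-prop} kills the two outer terms. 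First I would apply Proposition \ref{torin-prop} to the cartesian square with top row ${\mathcal A}[N]\backslash\epsilon_\cA(S)\hookrightarrow {\mathcal A}\backslash\epsilon_\cA(S)$ and bottom row ${\mathcal A}_T[N]\backslash\epsilon_{\cA_T}(S)\hookrightarrow {\mathcal A}_T\backslash\epsilon_{\cA_T}(S)$, with vertical maps the base-change morphisms $\BC_l$, $\BC_h$ and the restriction of $\BC$; here $c=2g$ so the shifted term $H^{\bullet+1-2c}$ contributes $H^0_\sM((\cdot)[N]\backslash\epsilon(\cdot),0)$, exactly the target of the residue. This gives a commutative square relating $\res_\cA$, $\res_{\cA_T}$, and the pullbacks $\BC_h^*$, $\BC_l^*$.

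Next I would check that this square is compatible with the $\tr_{[a]}$-action, so that it descends to the weight-$0$ eigenspaces. The point is that base-change commutes with $[a]$ (the morphism $[a]$ on $\cA_T$ is the pullback of $[a]$ on $\cA$), and both $j^*$ and $[a]_*$ in the definition of $\tr_{[a]}$ (Definition \ref{deftra}) are compatible with $\BC$-pullback by the usual Bloch-Ogus functoriality — this is essentially the content of Lemma \ref{tr-and-loc-seq} applied along the base-change morphism. Hence $\BC_h^*$ and $\BC_l^*$ preserve the respective weight-$0$ subspaces and intertwine the restrictions of the residue maps. Since on those eigenspaces the outer terms of both localisation sequences vanish (Proposition \ref{decomp-prop}, noting that $\cA_T$ is again an abelian scheme so Deninger-Murre applies to it as well), the residue maps become the isomorphisms $\rho_\cA$, $\rho_{\cA_T}$, and the commutative square reads precisely $\BC_l^*\circ\rho_\cA=\rho_{\cA_T}\circ\BC_h^*$.

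I expect the only real subtlety to be bookkeeping: one must make sure the weight-$0$ eigenspace on the source of $\rho_{\cA_T}$ is literally $H^0_\sM({\mathcal A}_T[N]\backslash\epsilon_{\cA_T}(S),0)^{(0)}$ and that $\BC_l^*$ lands there, which follows because pullback along a morphism of abelian schemes is $\tr_{[a]}$-equivariant for the functorial reasons above, but it is worth stating explicitly. Everything else is a formal diagram chase combining Proposition \ref{torin-prop} with the equivariance from Lemma \ref{tr-and-loc-seq}; no computation beyond this is needed.
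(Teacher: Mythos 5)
Your proposal is essentially the paper's own proof: the paper disposes of this lemma with a one-line appeal to Proposition \ref{torin-prop}, and your unwinding (base-change compatibility of the localisation sequence, $\tr_{[a]}$-equivariance of the pullbacks, then restriction to the weight-$0$ eigenspaces where the outer terms vanish) is exactly what that citation leaves implicit. Two small corrections: the codimension of ${\mathcal A}[N]\backslash\epsilon_\cA(S)$ in ${\mathcal A}\backslash\epsilon_\cA(S)$ is $c=g$, not $2g$ (it is $c=g$ that makes $H^{2g-1+1-2c}_\sM(\cdot,g-c)=H^0_\sM(\cdot,0)$ come out right), and the $\tr_{[a]}$-equivariance of $\BC_h^*$ and $\BC_l^*$ is not literally Lemma \ref{tr-and-loc-seq} (which concerns the action of $[a]$ on a fixed abelian scheme) but the standard compatibility of finite push-forward with pull-back in the transverse cartesian square for $[a]$, the same functoriality the paper uses without comment in Lemma \ref{BClem}.
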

\begin{proof}
Follows from \ref{torin-prop}.  \end{proof}

\subsubsection{An intermediate result}
 
 \begin{proposition}
 {\rm (a)} The map $$H^{2g-1}_{\mathcal M}(\cA\backslash\cA[N],g)^{(0)}\to 
 H^{2g-1}_{D,\an}((\cA\backslash\cA[N])_\bbR,\bbR(g))$$ induced by $\cyc_\an$ is injective.

{\rm (b)}  Let $\sigma\in\cA[N](S)\backslash\epsilon_\cA(S)$. Let $\pol_{-\sigma}\in H^{2g_\cA-1}_{\mathcal M}(\cA\backslash\cA[N],g_\cA))^{(0)}$ be the element 
corresponding to the class of $-\sigma$ in $H^0_{\mathcal M}(\cA[N]\backslash\epsilon_\cA(S),0)$. Then $$-2\cdot \cyc_\an(\pol_{-\sigma})=(\sigma^*\mfg_{\cA^\vee}-\mfg_{\cA^\vee})|_{\cA\backslash\cA[N]}.$$
   \label{polgpr} 
 \end{proposition}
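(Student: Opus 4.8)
\emph{The plan.} Both assertions will be reduced to the case of products of elliptic curves by a base-change and deformation argument. The essential observation is that, although analytic Deligne cohomology carries no localisation (Gysin) sequences, \emph{motivic} cohomology does: the residue isomorphism $\rho_\cA$ of \eqref{fundis} is an isomorphism on the weight $0$ part, is compatible with base change (Lemma \ref{rescom} and Proposition \ref{torin-prop}), and restriction to a fibre is injective on $H^0_\sM({\mathcal A}[N]\backslash\epsilon_\cA(S),0)$ (there it is the identity on the set of irreducible components). Thus a weight $0$ motivic class is determined by its residue, and the residue may be read off on a single fibre. Since in addition the currents $\mfg_{\cA^\vee}$, the maps $\cyc_\an$ and the classes $\pol^0_{\sM,\varphi,\cA,a}$ are all compatible with base change of abelian schemes (for $\mfg_{\cA^\vee}$ this is its base-change property recalled above, for $\cyc_\an$ functoriality of the cycle class map, for the polylogarithm Lemma \ref{rescom}), equalities and injectivity statements transport freely along base change.

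\emph{Reduction to a universal family, then to a special fibre.} Replacing $S$ by a finite \'etale cover equipped with a full level-$M$ structure for some $M$ with $N\mid M$ prime to the characteristic — harmless, since pull-back along a finite \'etale morphism is injective on $\bbQ$-coefficient motivic and analytic Deligne cohomology — we may assume that $\cA\to S$ is obtained by base change from the universal abelian scheme $\cA^{\univ}\to\cM$ over a fine moduli space of principally polarised abelian varieties of dimension $g$ with level structure; one may moreover arrange $\cM$ to be connected and to carry a point $s$ with $\cA^{\univ}_s\isom E_1\times\cdots\times E_g$ a product of elliptic curves (with product polarisation and induced level structure). By the base-change compatibilities above it suffices to prove (a) and (b) for $\cA^{\univ}\to\cM$. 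For (a): if $\xi$ lies in the weight $0$ part of $H^{2g-1}_\sM(\cA^{\univ}\backslash\cA^{\univ}[N],g)$ and $\cyc_\an(\xi)=0$, then $\cyc_\an(\xi|_{\cA^{\univ}_s})=0$, whence $\xi|_{\cA^{\univ}_s}=0$ by the elliptic case below; therefore $\rho_{\cA^{\univ}}(\xi)$ vanishes, because $\cM$ is connected and $H^0_\sM(\cA^{\univ}[N]\backslash\epsilon(\cM),0)$ injects into its fibre at $s$, and so $\xi=0$. For (b): by the single-section analogue of Lemma \ref{eqal} the right-hand side of (b) for $\cA^{\univ}$ equals $\cyc_\an(\psi)$ with $\psi$ in the weight $0$ part, and by (a) it suffices to prove $\psi=-2\cdot\pol_{-\sigma}$; using that $\rho$ is base-change compatible and that restriction to $\cA^{\univ}_s$ is injective on $H^0_\sM$, it is enough to prove $\psi|_{\cA^{\univ}_s}=-2\cdot\pol_{-\sigma}|_{\cA^{\univ}_s}$, which follows from (a) and (b) for $E_1\times\cdots\times E_g$ together with the base-change compatibility of $\mfg_{\cA^\vee}$.

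\emph{Products of elliptic curves, and the passage to Theorem \ref{polg}.} For $\cA=E_1\times\cdots\times E_g$ over a field both sides of (b) decompose compatibly with the product: $\pol_{-\sigma}$ is the sum over $i_0$ of the exterior product of the fundamental classes of the $E_i$ ($i\neq i_0$) with the degree $0$ elliptic polylogarithm of $E_{i_0}$ attached to $-\sigma_{i_0}$ — for which one uses the invariance properties of residue maps of \S\ref{invpropres} (Lemma \ref{mot-lem}) — while $\mfg_{(E_1\times\cdots\times E_g)^\vee}$ is the Gillet--Soul\'e $\ast$-product of the $\mfg_{E_i^\vee}$, so that $\sigma^*\mfg_{\cA^\vee}-\mfg_{\cA^\vee}$, restricted to $\cA\backslash\cA[N]$, becomes the matching sum of exterior products of the translation-invariant curvature forms of the $E_i$ ($i\neq i_0$) with $\sigma_{i_0}^*\mfg_{E_{i_0}^\vee}-\mfg_{E_{i_0}^\vee}$ (Lemma \ref{green-lemma}). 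This reduces (a) and (b) to a single elliptic curve $E$, where both are classical: $H^1_\sM(E\backslash E[N],1)^{(0)}$ sits inside $\cO^*(E\backslash E[N])_\bbQ$ modulo constants, on which $f\mapsto\log|f|$ is injective (this gives (a)); and the degree $0$ elliptic polylogarithm is the class of the relevant elliptic (modular) unit, whose regulator into analytic Deligne cohomology is, up to the factor $-2$, the difference of Green functions $\sigma^*\mfg_{E^\vee}-\mfg_{E^\vee}$ — essentially the Kronecker limit formula, which gives (b). Finally, Theorem \ref{polg} follows from Proposition \ref{polgpr}: its injectivity part is (a), and summing the identity in (b) over $\sigma\in\cA[N]$ (the terms with $\sigma\in\epsilon_\cA(S)$ being $0$), using linearity of $\pol^0$ in the parameter, $\sum_{\sigma\in\cA[N]}[-\sigma]=1_N^\circ$, and the distribution relation $[N]^*\mfg_{\cA^\vee}=[N]^*[N]_*\mfg_{\cA^\vee}=\sum_{\sigma\in\cA[N]}\sigma^*\mfg_{\cA^\vee}$ (a consequence of $[N]_*\mfg_{\cA^\vee}=\mfg_{\cA^\vee}$, Theorem \ref{mainth0}(c)), yields the displayed formula.

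\emph{Main obstacle.} The crux is the two steps above: replacing the missing residue in analytic Deligne cohomology by the motivic residue and then deforming to a product of elliptic curves. The two genuinely technical points are the product formulas — on the motivic side the control of residues in iterated complements (for which Propositions \ref{torin-prop} and \ref{rdres-prop} are tailored) and on the current side the Gillet--Soul\'e star-product calculus of Green currents — together with the geometric input that the moduli space can be taken connected with a product-of-elliptic-curves fibre; once these are in place the argument above assembles them.
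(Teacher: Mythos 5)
Your overall strategy for part (b) --- realisability of the current difference via Arakelov theory (Lemma \ref{eqal}), product formulas on the motivic and on the current side, the elliptic-unit computation for $g=1$, and specialisation of a universal family to a fibre that is a product of elliptic curves --- is essentially the paper's proof of (b). The genuine problem is part (a). In the paper, injectivity of $\cyc_\an$ on $H^{2g-1}_\sM(\cA\backslash\cA[N],g)^{(0)}$ is proved \emph{directly, for every} $\cA\to S$ (Lemma \ref{injlem}): one composes $\cyc_\an$ with the map $\phi_B$ to Betti cohomology, where residue maps do exist, and uses the commutative diagram relating $\rho_\cA$ to the Betti residue isomorphism together with the injectivity of the cycle class map on $H^0_\sM(\cA[N]\backslash\epsilon_\cA(S),0)$. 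You instead try to prove (a) by the same specialisation argument as (b); this requires (a) for the special fibre, i.e.\ for a product of elliptic curves $E_1\times\cdots\times E_g$ over $k$ with $g>1$, and at that point you only assert that the product formulas ``reduce (a) to a single elliptic curve''. They do not: Lemmata \ref{mot-lem} and \ref{green-lemma} tell you what $\cyc_\an$ does to the particular generators $\pol_{-\sigma}$, whereas injectivity of $\cyc_\an$ on the whole weight-$0$ subspace of the product amounts to the linear independence in $H^{2g-1}_{D,\an}$ of the family of classes $(\sigma^*\mfg_{\cA^\vee}-\mfg_{\cA^\vee})|_{\cA\backslash\cA[N]}$ as $\sigma$ runs through the non-zero torsion sections; your $g=1$ argument (zero class forces trivial divisor, hence a constant, which has trace weight $2$) has no formal analogue for a general linear combination on the product. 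Since your deduction of (b) for the universal family also invokes (a) for the product fibre (to pass from an equality after $\cyc_\an$ to an equality of motivic classes), this gap propagates to part (b) in the general case. The natural repair is to prove (a) as the paper does, through the Betti realisation, where a residue map is available; the specialisation machinery is then needed only for (b).

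Two smaller points. You reduce to the moduli space of \emph{principally} polarised abelian varieties; a general abelian scheme need not be principally polarisable, which is why the paper fixes a polarisation of some degree $d$ and uses the fine moduli space ${\rm A}_{g,d,M}$. Also, the product decomposition of $\pol_{-\sigma}$ is not a sum of exterior products with ``fundamental classes of the $E_i$'': as in Lemma \ref{mot-lem} it is the sum of push-forwards $(\Id\times\tau)_*x+(0\times\Id)_*y$ along specific torsion and zero slices, and one needs the residue bookkeeping of Proposition \ref{rdres-prop} (in particular the value $-1$ of the residue at the zero section) to see that the residues add up to $\sigma\times\tau$; as stated, your decomposition would not even land in the correct cohomological degree.
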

 
 The proof of Proposition \ref{polgpr} will rely on the following lemmata.
 
 \begin{lemma}
  Let $\sigma\in\cA[N](S)\backslash\epsilon_\cA(S)$. 
  Then $$(\sigma^*\mfg_{\cA^\vee}-\mfg_{\cA^\vee})|_{\cA\backslash\{\epsilon_\cA(S),-\sigma(S)\}}\subseteq \cyc_\an(H^{2g_\cA-1}_{\mathcal M}(\cA\backslash\{\epsilon_\cA(S),-\sigma(S)\},g_\cA))^{(0)})$$
   \label{eqal}
 \end{lemma}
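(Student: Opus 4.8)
The goal is to show that the current $(\sigma^*\mfg_{\cA^\vee}-\mfg_{\cA^\vee})$, restricted to the open complement of the two torsion sections $\epsilon_\cA(S)$ and $-\sigma(S)$, lies in the image of the analytic cycle class map from $H^{2g-1}_\sM(\cA\backslash\{\epsilon_\cA(S),-\sigma(S)\},g)^{(0)}$. The exact sequence \eqref{arfundseq} (applied to the variety $\cA$, or rather to an open piece of it) tells us that a class in $\wt A^{g-1,g-1}$ is in the image of $\cyc_\an$ precisely when its image in $\ari{\rm CH}^g$ comes from $0$ in ${\rm CH}^g$, i.e. when the corresponding arithmetic cycle class is torsion (rationally zero) after forgetting the Green current. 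So the plan is: first, produce an arithmetic cycle whose archimedean component is the prescribed current and whose finite (algebraic) component is rationally trivial; second, check that the resulting motivic class lands in the weight-$0$ eigenspace for $\tr_{[a]}$.

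\textbf{First step: realising the current via Arakelov theory.} By Theorem \ref{mainth0}(b), the pair $(S_0,\mfg_{\cA^\vee})$ equals $(-1)^g p_{1,*}(\ari\ch(\ol{\mathcal P}))^{(g)}$ in $\ari{\rm CH}^g(\cA)_\bbQ$. Pulling back along the translation-by-$\sigma$ automorphism $t_\sigma:\cA\to\cA$ (which is an isomorphism of arithmetic varieties over $R$, so acts on $\ari{\rm CH}^g$), and using that $t_\sigma$ carries $S_0$ to $-\sigma(S)$ while $t_\sigma^*\mfg_{\cA^\vee}=\sigma^*\mfg_{\cA^\vee}$ in the notation of the statement, one obtains that $(-\sigma(S),\sigma^*\mfg_{\cA^\vee})$ and $(S_0,\mfg_{\cA^\vee})$ are both represented by translates of the same arithmetic Chern-character component of $\ol{\mathcal P}$. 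Hence their difference
$$
(-\sigma(S),\sigma^*\mfg_{\cA^\vee})-(S_0,\mfg_{\cA^\vee}) \in \ari{\rm CH}^g(\cA)_\bbQ
$$
has underlying algebraic cycle $[-\sigma(S)]-[S_0]$, which is a difference of two torsion sections. Restricting to the open set $\cA\backslash\{\epsilon_\cA(S),-\sigma(S)\}$ kills the algebraic part entirely, so the restriction of this arithmetic class lies in the kernel of $\ari{\rm CH}^g\to {\rm CH}^g$; by the exactness of \eqref{arfundseq} it is therefore $\cyc_\an$ of a motivic class $\eta\in H^{2g-1}_\sM(\cA\backslash\{\epsilon_\cA(S),-\sigma(S)\},g)_\bbQ$, and $\cyc_\an(\eta)=(\sigma^*\mfg_{\cA^\vee}-\mfg_{\cA^\vee})|_{\cA\backslash\{\epsilon_\cA(S),-\sigma(S)\}}$ up to the sign bookkeeping. (One must be slightly careful: \eqref{arfundseq} is stated for $\cA$ itself; one applies the analogous sequence for the open arithmetic variety, or argues by a diagram chase comparing $\cA$ and the open subscheme. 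I would use the standard compatibility of the arithmetic fundamental sequence with localisation, citing the same references.)

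\textbf{Second step: the eigenspace condition.} It remains to arrange that the motivic class lies in $H^{2g-1}_\sM(\cdot,g)^{(0)}$, i.e. is killed by a power of $(\tr_{[a]}-\Id)$. This is forced by property (c) of Theorem \ref{mainth0}: $\mfg_{\cA^\vee}=[n]_*\mfg_{\cA^\vee}$ for all $n\ge 2$, and the analogous identity for $\sigma^*\mfg_{\cA^\vee}$ follows from the commutation of $[n]$ with translations by $N$-torsion (since $(n,N)$ can be arranged coprime, or by a direct argument on the level of currents). Translating this into the statement that $\cyc_\an(\eta)$ is $\tr_{[a]}$-invariant, and then using the injectivity of $\cyc_\an$ on the relevant piece — which is part (a) of Proposition \ref{polgpr}, proved independently via the known injectivity of the Beilinson regulator on the motivic cohomology of (an open in) an abelian scheme, or rather its compatibility with $\cyc_\an$ — we deduce $\tr_{[a]}\eta=\eta$, hence $\eta\in H^{2g-1}_\sM(\cdot,g)^{(0)}$. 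Strictly, to conclude only the membership asserted in Lemma \ref{eqal} we do not even need the full invariance: it suffices to project $\eta$ onto its weight-$0$ component under the $\tr_{[a]}$-eigenspace decomposition of Proposition \ref{decomp-prop} (suitably extended to the open subscheme) and observe, again using injectivity of $\cyc_\an$ and $\tr_{[a]}$-equivariance of $\cyc_\an$ together with the fact that $\cyc_\an(\eta)$ is already in the weight-$0$ part of $\wt A^{g-1,g-1}$, that this projection still maps to the same current.

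\textbf{Main obstacle.} The delicate point is the first step: making precise that the arithmetic fundamental exact sequence \eqref{arfundseq} can be used on the \emph{open} variety $\cA\backslash\{\epsilon_\cA(S),-\sigma(S)\}$, and that the arithmetic cycle $(-\sigma(S),\sigma^*\mfg_{\cA^\vee})-(S_0,\mfg_{\cA^\vee})$ — whose defining identity lives in $\ari{\rm CH}^g(\cA)_\bbQ$ — really restricts to something in the image of $\cyc_\an$ on the open piece with the correct Green current. This requires compatibility of $p_{1,*}\ari\ch(\ol{\mathcal P})$ with the translation action and with restriction to opens, i.e. a bit of the Gillet–Soulé calculus of Green currents and the functoriality of arithmetic Chow groups under the (non-proper) open immersion. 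Everything else — the eigenspace bookkeeping and the sign — is routine given Theorem \ref{mainth0} and Proposition \ref{polgpr}(a).
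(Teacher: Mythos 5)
Your first step contains a genuine logical gap. The exact sequence \eqref{arfundseq} gives $\mathrm{im}(\cyc_\an)=\ker\big(\widetilde{A}^{g-1,g-1}\to\ari{\rm CH}^g\big)$, so to place $\gamma:=(\sigma^*\mfg_{\cA^\vee}-\mfg_{\cA^\vee})|_{\cA\backslash\{\epsilon_\cA(S),-\sigma(S)\}}$ in the image of $\cyc_\an$ you must show that the purely archimedean arithmetic class $(0,\gamma)$ is \emph{zero} in $\ari{\rm CH}^g$ of the open subscheme. Your argument instead observes that after restricting to the open set the algebraic part of $(-\sigma(S),\sigma^*\mfg_{\cA^\vee})-(S_0,\mfg_{\cA^\vee})$ disappears, so the restriction lies in $\ker(\ari{\rm CH}^g\to{\rm CH}^g)$, and you then invoke exactness. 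But membership in that kernel is automatic for any class of the form $(0,\gamma)$ (it is exactly the image of $\widetilde{A}^{g-1,g-1}\to\ari{\rm CH}^g$), and exactness at $\ari{\rm CH}^g$ only tells you it comes from \emph{some} current class, not that $\gamma$ itself is hit by $\cyc_\an$. As written, your reasoning would show that an arbitrary current class on the open piece lies in the image of $\cyc_\an$, which is false. Likewise, "both are translates of the same arithmetic Chern-character component" does not give the needed identity: translates of an arithmetic class need not be equal, and the statement would fail for a non-torsion section.

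What is missing is precisely the computation the paper performs: using the seesaw-type identity that translation by $\sigma$ changes $\overline{{\mathcal P}}$ by the pullback of the canonically metrized rigidified line bundle $\overline{\Sigma}$ on $\cA^\vee$ attached to $\sigma$, the compatibility of $p_{1,*}$ in arithmetic Chow theory with this base change, the multiplicativity of $\ari{\ch}$, and crucially $\ari{\ch}(\overline{\Sigma})=1$ (this is where the torsion-ness of $\sigma$ and the canonical metric enter), one gets $\sigma^*(S_0,\mfg_{\cA^\vee})-(S_0,\mfg_{\cA^\vee})=0$ in $\ari{\rm CH}^g(\cA)_\bbQ$, hence $(-\sigma(S),\sigma^*\mfg_{\cA^\vee})-(S_0,\mfg_{\cA^\vee})=0$, and restricting to the complement of the two sections gives exactly $(0,\gamma)=0$ in $\ari{\rm CH}^g$ of the open subscheme; then \eqref{arfundseq} applies. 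Your second step (the weight-$0$ eigenspace condition) is essentially the paper's argument — equivariance of $\cyc_\an$ for the trace operator plus the Jordan decomposition — although the detour through injectivity of $\cyc_\an$ and full $\tr_{[a]}$-invariance is unnecessary; but without repairing the first step the lemma is not proved.
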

 \begin{proof}
 It is sufficient to show that 
 \begin{equation}
 (\sigma^*\mfg_{\cA^\vee}-\mfg_{\cA^\vee})|_{\cA\backslash\{\epsilon_\cA(S),-\sigma(S)\}}\subseteq\cyc_\an(H^{2g_\cA-1}_{\mathcal M}(\cA\backslash\cA[N],g_\cA)).
 \label{intereq}
\end{equation}
Indeed, a natural analog of the operator $\tr_{[a]}$ operates on analytic Deligne cohomology 
and the map $\cyc_\an$ intertwines this operator with $\tr_{[a]}$. So if \ref{intereq} holds, 
we may deduce the conclusion of the lemma from the existence of the Jordan decomposition. 

Let $\Sigma$ be the rigidified line bundle on $\cA^\vee$ corresponding to 
$\sigma$ via the $S$-isomorphism $\cA\simeq(\cA^\vee)^\vee$. Let 
$\overline{\Sigma}$ be the unique hermitian line bundle on $\cA^\vee$, such that 

- its underlying line bundle is $\Sigma;$

- the rigidification is an isometry;

- its curvature form is translation invariant on the fibres of the map $\cA^\vee(\bbC)\to S(\bbC).$

To prove relation \ref{eqal}, we compute in $\ari{\rm CH}^g(\cA)_\bbQ$:
\begin{eqnarray*}
\sigma^*(S_0,\mfg_{{\mathcal A}^\vee})-(S_0,\mfg_{{\mathcal A}^\vee})&=&
(-1)^g\Big(\sigma^*(p_{1,*}(\ari{\ch}(\overline{{\mathcal P}}))^{(g)})-p_{1,*}(\ari{\ch}(\overline{{\mathcal P}}))^{(g)}\Big)\\
&=& 
(-1)^g\Big(p_{1,*}(\ari{\ch}(\overline{{\mathcal P}}\otimes\overline{\Sigma}))^{(g)}-p_{1,*}(\ari{\ch}(\overline{{\mathcal P}}))^{(g)}\Big)\\
&=&
(-1)^g\Big(p_{1,*}(\ari{\ch}(\overline{{\mathcal P}}))^{(g)}-p_{1,*}(\ari{\ch}(\overline{{\mathcal P}}))^{(g)}\Big)=0
\end{eqnarray*}
Here we used in the second equality the fact that the direct image in arithmetic Chow theory is naturally compatible 
with smooth base-change. We also used the fact that 
$\ari{\ch}(\overline{\Sigma})=1$ and the multiplicativity of the arithmetic Chern character. Now, we have 
$$
\sigma^*(S_0,\mfg_{{\mathcal A}^\vee})-(S_0,\mfg_{{\mathcal A}^\vee})=
(-\sigma(S),\sigma^*\mfg_{{\mathcal A}^\vee})-(S_0,\mfg_{{\mathcal A}^\vee})
$$
and thus the image of 
$$
\big(\sigma^*\mfg_{\cA^\vee}-\mfg_{\cA^\vee}\big)|_{\cA\backslash\{\epsilon_\cA(S),-\sigma(S)\}}
$$
in $\ari{\rm CH}^g(\cA\backslash\{\epsilon_\cA(S),-\sigma(S)\})_\bbQ$ vanishes. Using \ref{arfundseq}, we 
may conclude. 
 \end{proof}
 
 The next lemma proves assertion (a) in Proposition \ref{polgpr}.
 \begin{lemma}
The map $H^{2g-1}_{\mathcal M}(\cA\backslash\cA[N],g))^{(0)}\to 
 H^{2g-1}_{D,\an}((\cA\backslash\cA[N])_\bbR,\bbR(g))$ induced by $\cyc_\an$ is injective.
 \label{injlem}
 \end{lemma}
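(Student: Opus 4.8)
The plan is to factor the map under consideration through Betti cohomology. By the construction of $\cyc_\an$ recalled in the previous subsection, the composition $\phi_B\circ\cyc_\an$ of $\cyc_\an$ with the comparison map $\phi_B$ to Betti cohomology is the ordinary topological cycle class map
\[
r_B\colon H^{2g-1}_\sM(\cA\backslash\cA[N],g)\longrightarrow H^{2g-1}((\cA\backslash\cA[N])(\bbC),\bbR(g)).
\]
Since injectivity of a composition implies injectivity of the first arrow, it is enough to prove that $r_B$ is injective on the weight zero eigenspace $H^{2g-1}_\sM(\cA\backslash\cA[N],g)^{(0)}$.

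Next I would use that singular cohomology $X\mapsto H^\bullet(X(\bbC),\bbR(\ast))$ (with the induced $F_\infty$-action) is a twisted Poincar\'e duality theory in the sense of Bloch--Ogus and that $r_B$ is a morphism of such theories: it is compatible with Gysin maps and localization sequences, and it intertwines the action of algebraic correspondences, in particular that of the Deninger--Murre projectors $\pi_i$. Therefore all the constructions of Section~2 go through verbatim with Betti cohomology in place of motivic cohomology --- this is exactly the content of the remark following Definition~\ref{deftra}, since the proofs of Proposition~\ref{decomp-prop} and Corollary~\ref{corfund} only use the Bloch--Ogus formalism --- and the resulting Betti avatars are compatible with the motivic ones via $r_B$. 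In particular one obtains a commutative square
\[
\begin{CD}
H^{2g-1}_\sM(\cA\backslash\cA[N],g)^{(0)} @>\rho_\cA>> H^0_\sM(\cA[N]\backslash\epsilon_\cA(S),0)^{(0)}\\
@Vr_BVV @VVr_BV\\
H^{2g-1}((\cA\backslash\cA[N])(\bbC),\bbR(g))^{(0)} @>\isom>> H^0((\cA[N]\backslash\epsilon_\cA(S))(\bbC),\bbR(0))^{(0)}
\end{CD}
\]
whose top arrow is the isomorphism $\rho_\cA$ of Corollary~\ref{corfund} and whose bottom arrow is its Betti analogue, both residue isomorphisms.

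It then remains to check that the right-hand vertical map is injective. As $\cA[N]\backslash\epsilon_\cA(S)$ is finite \'etale over $S$ (recall $k$ has characteristic zero), the group $H^0_\sM(\cA[N]\backslash\epsilon_\cA(S),0)$ is the $\bbQ$-vector space freely generated by the classes of the pairwise disjoint irreducible components, and $r_B$ carries each of these to the fundamental class in $H^0(-(\bbC),\bbR(0))$ of the corresponding nonempty analytic locus; these images are visibly linearly independent, so $r_B$ is injective on all of $H^0_\sM(\cA[N]\backslash\epsilon_\cA(S),0)$, a fortiori on its $(0)$-eigenspace. Chasing the square, $r_B$ is injective on $H^{2g-1}_\sM(\cA\backslash\cA[N],g)^{(0)}$, hence so is $\cyc_\an$. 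The one point that needs genuine care --- and which I regard as the only real content of the argument --- is the commutativity of the square, i.e. that the Betti residue isomorphism obtained by transporting the proof of Corollary~\ref{corfund} matches the motivic one under $r_B$; this comes down to the naturality of the Gysin/localization package and to the fact that the Deninger--Murre projectors are represented by algebraic cycles, so that $r_B$ commutes with them.
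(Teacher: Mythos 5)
Your proof is correct and follows essentially the same route as the paper's: both arguments factor $\cyc_\an$ through Betti cohomology via $\phi_B$, compare the motivic residue isomorphism $\rho_\cA$ with its Betti analogue in a commutative square (using that Betti cohomology is a Bloch--Ogus theory compatible with the regulator and the trace operators), and conclude from the injectivity of the degree-zero regulator on $\cA[N]\backslash\epsilon_\cA(S)$. The only, harmless, difference is that you check the injectivity of the right-hand vertical map by an explicit count of irreducible components, where the paper simply asserts it and routes through $H^0_{D,\an}$.
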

 \begin{proof}
 Notice that we have commutative diagram
 $$
\xymatrix{
H^{2g-1}_\sM({\mathcal A}\backslash{\mathcal A}[N],g)^{(0)}\ar[r]^{\rho_\cA,\isom}\ar[d]^{\cyc_\an} & 
H^0_\sM({\mathcal A}[N]\backslash\epsilon_\cA(S),0)\ar[d]^{\cyc_\an}\\
H^{2g-1}_{D,\an}({\mathcal A}\backslash{\mathcal A}[N],g)\ar[d]^{\phi_B} & 
H^0_{D,\an}({\mathcal A}[N]\backslash\epsilon_\cA(S),0)\ar[d]^{\phi_B,\isom}\\
H^{2g-1}({\mathcal A}(\bbC)\backslash{\mathcal A}[N](\bbC),g)\ar[r]^{\isom} &
H^0({\mathcal A}[N](\bbC)\backslash\epsilon_\cA(S)(\bbC),0)
}
$$
where the isomorphism on the bottom line is the residue map in Betti 
cohomology. The fact that the residue map in Betti 
cohomology is an isomorphism 
follows from the fact that Betti cohomology is a twisted Poincar\'e duality 
theory in the sense of Bloch-Ogus and Corollary \ref{corfund}. 
Furthermore, the upper right vertical map can be seen to be injective. This implies the assertion of the Lemma.
\end{proof}

\begin{lemma}
Let $\cB\to S$ be an abelian scheme. 
%Suppose that $$\cA[N]\simeq(\bbZ/N\bbZ)^{2g}_S$$ and $$\cB[N]\simeq(\bbZ/N\bbZ)^{2g}_S.$$ 

Let $\sigma\in\cA[N](S)$ and let 
$\tau\in \cB[N](S)$. 

Let $x:=(\sigma^*\mfg_{\cA^\vee}-\mfg_{\cA^\vee})|_{\cA\backslash\cA[N]}$ and $y:=(\tau^*\mfg_{\cB^\vee}-\mfg_{\cB^\vee})|_{\cB\backslash\cB[N]}$. 

Furthermore, let $$z:=\big((\sigma\times\tau)^*\mfg_{\cA^\vee\times\cB^\vee}-\mfg_{\cA^\vee\times\cB^\vee}\big)|_{\cA\times\cB\backslash(\cA\times\cB)[N]}.$$ Then we have
$$
z=(\Id_{\cA\backslash\cA[N]}\times\tau)_*x+(0\times\Id_{\cB\backslash\cB[N]})_*y.
$$
\label{green-lemma}
\end{lemma}
\begin{proof}
Let $\nu_{\cA^\vee}:=(-1)^g p_{1,*}({\ch}(\overline{{\mathcal P}}))^{(g)}$.  We 
shall write $\delta_{\cA\times 0}$ for the Dirac current in \mbox{$\cA(\bbC)\times_{S(\bbC)}\cB(\bbC)$} of 
the closed complex manifold associated with the image of $\cA$ by the morphism $(\Id_{\cA}\times_S\epsilon_{\cB}\circ \pi_{\cA}).$ 
We shall also use the notation $\delta_{\cA\times\tau}$, which is defined similarly.
Let $q_\cA:\cA\times_S\cB\to\cA$ and $q_\cB:\cA\times_S\cB\to\cB$ be the obvious projections. 
First we make the computation
\begin{eqnarray*}
&&q_\cA^*\mfg_{\cA^\vee}\wedge(\delta_{\cA\times 0}-\delta_{\cA\times\tau})
+
(q_\cB^*\tau^*\mfg_{\cB^\vee}-q_\cB^*\mfg_{\cB^\vee})\wedge(\delta_{0\times\cB}-q_{\cA}^*\nu_{\cA^\vee})\\
&=&
q_\cA^*\mfg_{\cA^\vee}\wedge(\delta_{\cA\times 0}-\delta_{\cA\times\tau})
-
(q_\cB^*\tau^*\mfg_{\cB^\vee}-q_\cB^*\mfg_{\cB^\vee})\wedge({\rm dd}^cq_{\cA}^*\mfg_{\cA^\vee})\\
&=&
(\delta_{\cA\times 0}-\delta_{\cA\times\tau}+\delta_{\cA\times\tau}-q^*_\cB\tau^*\nu_{\cB^\vee}-\delta_{\cA\times 0}+q^*_\cB\nu_{\cB^\vee})\wedge q_\cA^*\mfg_{\cA^\vee}=0
\end{eqnarray*}
In this computation, we used the following elementary fact (see 
\cite[before par. 6.2.1]{Roessler-Adams}): if $\eta,\omega$ are two currents of 
type $(p,p)$ on a complex manifold, such that $\eta$ and $\omega$ have disjoint wave front sets, 
then $$\eta\wedge{\rm dd}^c\omega-{\rm dd}^c\eta\wedge\omega\in{\rm im}\partial+{\rm im}\bar\partial.$$
Now using 
the formula \cite[Th. 1.2, 5.]{Loopaper} for the canonical current of fibre products of abelian schemes and the previous computation, we get that 
\begin{eqnarray*}
&&(\sigma\times\tau)^*\mfg_{\cA^\vee\times\cB^\vee}-\mfg_{\cA^\vee\times\cB^\vee}\\
&=&
(\sigma\times\tau)^*\big(q_\cA^*\mfg_{\cA^\vee}\ast q_{\cB}^*\mfg_{\cB^\vee}\big)-\mfg_{\cA^\vee\times\cB^\vee}=
q_\cA^*\sigma^*\mfg_{\cA^\vee}\ast q_{\cB}^*\tau^*\mfg_{\cB^\vee}-q_\cA^*\mfg_{\cA^\vee}\ast q_{\cB}^*\mfg_{\cB^\vee}\\
&=&
q_\cA^*\sigma^*\mfg_{\cA^\vee}\wedge\delta_{\cA\times\tau}+q_{\cA}^*\sigma^*\nu_{\cA^\vee}\wedge 
q_{\cB}^*\tau^*\mfg_{\cB^\vee}-
q_\cA^*\mfg_{\cA^\vee}\wedge\delta_{A\times 0}-q_\cA^*\nu_{\cA^\vee}\wedge q_\cB^*\mfg_{\cB^\vee}\\
&=&
q_\cA^*\sigma^*\mfg_{\cA^\vee}\wedge\delta_{\cA\times\tau}+q_{\cA}^*\sigma^*\nu_{\cA^\vee}\wedge 
q_{\cB}^*\tau^*\mfg_{\cB^\vee}-
q_\cA^*\mfg_{\cA^\vee}\wedge\delta_{A\times 0}-q_\cA^*\nu_{\cA^\vee}\wedge q_\cB^*\mfg_{\cB^\vee}\\
&+&q_\cA^*\mfg_{\cA^\vee}\wedge(\delta_{\cA\times 0}-\delta_{\cA\times\tau})
+
(q_\cB^*\tau^*\mfg_{\cB^\vee}-q_\cB^*\mfg_{\cB^\vee})\wedge(\delta_{0\times\cB}-q_{\cA}^*\nu_{\cA^\vee})\\
&=&(q_\cA^*\sigma^*\mfg_{\cA^\vee}-
q_\cA^*\mfg_{\cA^\vee})\wedge\delta_{\cA\times \tau}+ 
(q_{\cB}^*\tau^*\mfg_{\cB^\vee}-q_\cB^*\mfg_{\cB^\vee})\wedge\delta_{0\times\cB}
\end{eqnarray*}
%Here the product $(\cdot)\ast(\cdot)$ is the star product of Green currents (see ...). Now notice that 
%the cycle class of $\cA\times\tau$ in $H^{2g_\cB}_{D,\an}((\cA\times\cB)_\bbR,\bbR(g_\cB))$ and the cycle class  of 
%$\cA\times0$ in $H^{2g_\cB}_{D,\an}((\cA\times\cB)_\bbR,\bbR(g_\cB))$ are the same, because 
%$\cA\times\tau$ and $\cA\times 0$ are algebraically equivalent. Also, ... and ... imply that 
%$\sigma^*\nu_{\cA^\vee}=\nu_{\cA^\vee}$ and ... implies that the class of $\nu_{\cA^\vee}$ in 
%$H_{D,\an}^{2g_\cA}(\cA,g_\cA)$ is equal to the cycle class of the $\epsilon_\cA(S)$. 
%So we have
%\begin{eqnarray*}
%&&\Big((\sigma\times\tau)^*\mfg_{\cA^\vee\times\cB^\vee}-\mfg_{\cA^\vee\times\cB^\vee}\Big)|_{\cA\times\cB\backslash 
%(\cA\times\cB)[N]}\\
%&=&
%\Big((\pi_\cA^*\sigma^*\mfg_{\cA^\vee}-
%\pi_\cA^*\mfg_{\cA^\vee})\wedge\delta_{\cA\times 0}\Big)|_{\cA\times\cB\backslash 
%(\cA\times\cB)[N]}+ 
%\Big((\pi_{\cB}^*\tau^*\mfg_{\cB^\vee}-\pi_\cB^*\mfg_{\cB^\vee})\wedge\delta_{0\times\cB}\Big)|_{\cA\times\cB\backslash 
%(\cA\times\cB)[N]}\\
%&=&
%\Big((\pi_\cA^*\sigma^*\mfg_{\cA^\vee}-
%\pi_\cA^*\mfg_{\cA^\vee})\wedge\delta_{\cA\times \tau}\Big)|_{\cA\times\cB\backslash 
%(\cA\times\cB)[N]}+ 
%\Big((\pi_{\cB}^*\tau^*\mfg_{\cB^\vee}-\pi_\cB^*\mfg_{\cB^\vee})\wedge\delta_{0\times\cB}\Big)|_{\cA\times\cB\backslash 
%(\cA\times\cB)[N]}
%\end{eqnarray*}
which implies the assertion of the lemma. 
\end{proof}

\begin{lemma}\label{mot-lem}
Let $\cB\to S$ be an abelian scheme of relative dimension $g_\cB$. 
%Suppose that $$\cA[N]\simeq(\bbZ/N\bbZ)^{2g}_S$$ and $$\cB[N]\simeq(\bbZ/N\bbZ)^{2g}_S.$$ 
Let $\sigma\in\cA[N](S)\backslash\epsilon_\cA(S)$ and let 
$\tau\in \cB[N](S)\backslash\epsilon_\cB(S)$. 

Let $x\in H^{2g_\cA-1}_{\mathcal M}(\cA\backslash\cA[N],g_\cA))^{(0)}$ be the element 
corresponding to the class of $\sigma$ in $H^0_{\mathcal M}(\cA[N]\backslash\epsilon_\cA(S),0)$. 

Let $y\in H^{2g_\cB-1}_{\mathcal M}(\cB\backslash\cB[N],g_\cB))^{(0)}$ be the element 
corresponding to the class of $\tau$ in  $H^0_{\mathcal M}(\cB[N]\backslash\epsilon_\cB(S),0)$.

Then $z:=(\Id_{\cA\backslash\cA[N]}\times \tau)_*x+(0\times\Id_{\cB\backslash\cB[N]})_*y$ lies in $H^{2(g_\cA+g_\cB)-1}_{\mathcal M}(\cA\times_S\cB\backslash(\cA\times_S\cB)[N],g_\cA+g_\cB))^{(0)}$ and 
corresponds to the class of $\sigma\times\tau$ in $H^0_{\mathcal M}((\cA\times_S\cB)[N]\backslash\epsilon_{\cA\times_S\cB}(S),0)$. 
\end{lemma}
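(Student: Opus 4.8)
The plan is to verify two things separately: that $z$ lies in the weight $0$ eigenspace $H^{2(g_\cA+g_\cB)-1}_\sM(\cA\times_S\cB\backslash(\cA\times_S\cB)[N],g_\cA+g_\cB)^{(0)}$, and that under the residue isomorphism $\rho_{\cA\times_S\cB}$ of Corollary \ref{corfund} (applied to the abelian scheme $\cA\times_S\cB$) it is sent to the class of $\sigma\times\tau$. Since $\rho_{\cA\times_S\cB}$ is injective on the weight $0$ part, these two facts give the lemma. Write $W=\cA\times_S\cB\backslash(\cA\times_S\cB)[N]$, $\iota=\Id_{\cA\backslash\cA[N]}\times\tau$ and $\iota_0=0\times\Id_{\cB\backslash\cB[N]}$, and recall the standing assumption $a\equiv 1\ (\mathrm{mod}\ N)$. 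The degree and twist bookkeeping is immediate: $\iota_*$ raises them by $2g_\cB$ and $g_\cB$ and $\iota_{0,*}$ by $2g_\cA$ and $g_\cA$, so $z$ lands in the asserted group.

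For the first point the key is that $\tr_{[a]}$ commutes with $\iota_*$ and with $\iota_{0,*}$. Indeed $[a]\circ(\Id\times\tau)=(\Id\times\tau)\circ[a]$ as morphisms of the relevant open subschemes (this uses $a\tau=\tau$), and the evident square relating $\iota$, its pull-back along $[a]$, and the open immersions $[a]^{-1}(\cdot)\hookrightarrow(\cdot)$ is cartesian, because $\tau\in\cB[N]\subseteq\cB[Na]$ and hence the $\tau$-coordinate plays no role in the condition defining $[a]_W^{-1}(W)$. Base change for these open and closed immersions, combined with functoriality of proper push-forward, then gives $\tr_{[a]}\circ\iota_*=\iota_*\circ\tr_{[a]}$; the argument for $\iota_0$ is identical. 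Since $(\tr_{[a]}-\Id)^l$ annihilates $x$ and $y$ for $l\gg0$, the defining formula for $z$ shows $(\tr_{[a]}-\Id)^lz=0$.

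For the second point, compute $\rho_{\cA\times_S\cB}(z)$ as the residue of $z$ along $(\cA[N]\times_S\cB[N])\backslash\epsilon_{\cA\times_S\cB}(S)$, decomposing it over connected components. Since $\iota_*x$ is the restriction to $W$ of a class pushed forward from the closed subscheme $\cA\times\{\tau\}$, its residue vanishes along every component $C\times C'$ with $C'\neq\tau(S)$; likewise the residue of $\iota_{0,*}y$ vanishes along every component with first factor $\neq\epsilon_\cA(S)$. On a component $C\times\tau(S)$ with $C$ a component of $\cA[N]$, Proposition \ref{rdres-prop}, applied with the ambient schemes $(\cA\backslash\epsilon_\cA(S))\times_S\cB$ and $(\cA\backslash(\cA[N]\backslash\epsilon_\cA(S)))\times_S\cB$ and with the transversality furnished by Proposition \ref{torin-prop}, identifies the residue of $\iota_*x$ there with the residue of $x$ along $C$. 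Now the residue of $x$ along $\cA[N]$ — the image of $x$ under the residue map attached to $\cA[N]\hookrightarrow\cA$ — equals $[\sigma(S)]-[\epsilon_\cA(S)]$: it restricts to $\rho_\cA(x)=[\sigma]$ on $\cA[N]\backslash\epsilon_\cA(S)$, and it lies in the kernel of the Gysin map $H^0_\sM(\cA[N],0)\to H^{2g_\cA}_\sM(\cA,g_\cA)$, on which the $[\epsilon_\cA(S)]$-coefficient is forced to be minus the $[\sigma(S)]$-coefficient because all torsion sections of $\cA$ are rationally equivalent (by the Deninger--Murre decomposition) and $[\epsilon_\cA(S)]\neq0$ in $H^{2g_\cA}_\sM(\cA,g_\cA)$. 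Hence the residue of $\iota_*x$ is $[\sigma\times\tau]-[\epsilon_\cA(S)\times\tau]$. Symmetrically, the residue of $\iota_{0,*}y$ is $[\epsilon_\cA(S)\times\tau]$, the contribution along $\epsilon_\cA(S)\times\epsilon_\cB(S)=\epsilon_{\cA\times_S\cB}(S)$ not appearing since that component has been removed. Adding, the $[\epsilon_\cA(S)\times\tau]$-terms cancel and $\rho_{\cA\times_S\cB}(z)=[\sigma\times\tau]$.

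The main obstacle is the second point, and within it the bookkeeping: one works with several ambient smooth schemes (opens in $\cA$, in $\cB$, and in their product) and several residue maps, and must check in each case that the hypotheses of Propositions \ref{torin-prop} and \ref{rdres-prop} are met. The \emph{conceptual} heart of the matter is the observation that the polylogarithm is a ``degree zero'' class, i.e. that its residue along the zero section equals $-[\epsilon_\cA(S)]$ and not $0$; this is exactly what produces the cancellation on the component $\epsilon_\cA(S)\times\tau(S)$, mirroring the cancellation of Dirac currents in the proof of Lemma \ref{green-lemma}.
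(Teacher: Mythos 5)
Your proposal is correct and follows essentially the same route as the paper: first membership of $z$ in the weight-$0$ eigenspace via commutation of $\Id_{\cA\backslash\cA[N]}\times\tau$ and $0\times\Id_{\cB\backslash\cB[N]}$ with $[a]$ (using $a\equiv 1\bmod N$), then a residue computation via Proposition \ref{rdres-prop} in which the coefficient $-1$ of the residue of $x$ (resp.\ $y$) at the zero section produces the cancellation of the $[\epsilon_\cA(S)\times\tau]$ terms. The only divergence is how that coefficient is pinned down: the paper pushes the residue forward to $H^0_\sM(S,0)\simeq\bbQ$ and uses exactness of the localization sequence (a degree argument), whereas you invoke rational equivalence of torsion sections with the zero section via Deninger--Murre together with $[\epsilon_\cA(S)]\neq 0$ in $H^{2g_\cA}_\sM(\cA,g_\cA)$ --- a correct but slightly heavier variant of the same point.
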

\begin{proof} For the first assertion, it is sufficient to show that $$z\in H^{2(g_\cA+g_\cB)-1}_{\mathcal M}(\cA\times_S\cB\backslash(\cA\times_S\cB)[N],g_\cA+g_\cB))^{(0)}$$ and that $z$ has residue $\sigma\times\tau$ in \mbox{$H^0_{\mathcal M}((\cA\times\cB)[N]\backslash\epsilon_{\cA\times\cB}(S),0)$.} 

The fact that  $$z\in H^{2(g_\cA+g_\cB)-1}_{\mathcal M}(\cA\times_S\cB\backslash(\cA\times_S\cB)[N],g_\cA+g_\cB))^{(0)}$$ 
follows from the fact that the morphisms $\Id_{\cA\backslash\cA[N]}\times \tau$ and 
$0\times\Id_{\cB\backslash\cB[N]}$ commute with the multiplication-by-$a$ morphism (since $a\equiv 1\bmod N$). 

To compute the 
residue of $z$, notice first that the value of the residue of $x$ (resp. $y$) at $\epsilon_\cA(S)$ 
(resp. $\epsilon_\cB(S)$) is $-1$. To see this, notice that the push-forward into $H^{2g_\cA}_{\mathcal M}(\cA,g_\cA)$ (resp. into $H^{2g_\cB}_{\mathcal M}(\cB,g_\cB)$) of the residue of $x$ (resp. $y$)  in $H^0(\cA[N],0)$ 
(resp. $H^0(\cB[N],0)$) vanishes. This vanishing implies that the push-forward to 
$H^0(S,0)\simeq\bbQ$ of the residue of 
$x$ (resp. $y$)  in $H^0(\cA[N],0)$ 
(resp. $H^0(\cB[N],0)$) vanishes too. If one combine this fact with the fact 
that the residue of 
$x$ (resp. $y$)  in $H^0(\cA[N]\backslash\epsilon_\cA(S),0)$ 
(resp. $H^0(\cB[N]\backslash\epsilon_\cB(S),0)$) is $\sigma(S)$ (resp. $\tau(S)$) (this holds by hypothesis), one obtains 
that the residue of $x$ (resp. $y$) at $\epsilon_\cA(S)$ 
(resp. $\epsilon_\cB(S)$) is $-1$.

Now using Proposition 
\ref{rdres-prop}, we may compute that the residue of $z$ in $H^0((\cA\times_S\cB)[N],0)$ is 
$$
(\sigma\times\tau-0\times \tau)+(0\times\tau-0\times 0)
$$
and thus the residue of $z$ in \mbox{$H^0_{\mathcal M}((\cA\times\cB)[N]\backslash\epsilon_{\cA\times\cB}(S),0)$}  is $\sigma\times\tau$.\end{proof}

\begin{lemma}
Proposition \ref{polgpr} (b) holds if the morphism $S\to\Spec\ k$ is the identity and \mbox{$\cA\simeq \prod_{i=1,S}^g E_i$,} where 
$E_i$ is an elliptic curve over $S=\Spec\ k.$ 
\label{ellpart}
\end{lemma}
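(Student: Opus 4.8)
The plan is to prove Proposition \ref{polgpr}(b) for $\cA=E_1\times_k\cdots\times_k E_g$ and an arbitrary $\sigma\in\cA[N](S)\backslash\epsilon_\cA(S)$ by induction on $g$. The case $g=1$ will carry all the arithmetic content; the induction step will follow formally from the two product formulae of Lemmata \ref{mot-lem} and \ref{green-lemma}, together with the compatibility of $\cyc_\an$ with Gysin pushforwards.

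For the base case $g=1$, I would argue as follows. With $\cA=E$ an elliptic curve over $k$ one has $H^1_\sM(E\backslash E[N],1)=\cO(E\backslash E[N])^*\otimes\bbQ$, and by \ref{incl} the target $H^1_{D,\an}((E\backslash E[N])_\bbR,\bbR(1))$ is a subspace of $\widetilde{A}^{0,0}((E\backslash E[N])_\bbR)$, i.e.\ of real $C^\infty$ functions on $(E\backslash E[N])(\bbC)$ modulo constants; thus the asserted identity is an equality of functions up to an additive constant. Under the residue isomorphism of Corollary \ref{corfund}, $\pol_{-\sigma}$ is identified with $\tfrac1N$ times the class of the elliptic (Siegel) unit with divisor $N\cdot(-\sigma)-N\cdot\epsilon_E$ --- this is the degree $0$ elliptic polylogarithm of \cite{BeLe} --- so that $\cyc_\an(\pol_{-\sigma})$ is, up to the standard normalisation, $\tfrac1N\log|\cdot|$ of that unit. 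On the other side, $\sigma^*\mfg_{E^\vee}-\mfg_{E^\vee}$ is the difference of two translates of the canonical Green function $\mfg_{E^\vee}$ of the origin on $E(\bbC)=\bbC/L$; as recalled above, its restriction away from $0$ is the Bismut--K\"ohler torsion of $\overline{\cP}$, and $\mfg_{E^\vee}$ is explicitly the Kronecker--Eisenstein function, i.e.\ $-\log\|\vartheta_L\|$ for the canonically metrised theta function $\vartheta_L$, again up to the same normalisation. Kronecker's second limit formula then identifies the restriction to $E\backslash E[N]$ of this translate difference with $-2$ times $\tfrac1N\log|\cdot|$ of the same Siegel unit, and since \ref{incl} is injective this gives the identity for $g=1$.

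For the induction step, assume the statement for products of fewer than $g$ elliptic curves, and write $\cA=E_1\times_k\cB$ with $\cB=E_2\times_k\cdots\times_k E_g$ and $\sigma=\sigma_1\times\sigma'$, where at least one of $\sigma_1\in E_1[N](k)$, $\sigma'\in\cB[N](k)$ is nonzero. If both are nonzero, Lemma \ref{mot-lem} gives
\[\pol_{-\sigma}=(\Id_{E_1\backslash E_1[N]}\times(-\sigma'))_*\pol_{-\sigma_1}+(0\times\Id_{\cB\backslash\cB[N]})_*\pol_{-\sigma'},\]
and Lemma \ref{green-lemma} gives the parallel decomposition of $\sigma^*\mfg_{\cA^\vee}-\mfg_{\cA^\vee}$ into the contributions coming from $E_1$ and from $\cB$ along the \emph{same} pushforward maps. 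Applying $-2\cyc_\an$ to the motivic identity, using that $\cyc_\an$ is $\bbQ$-linear and compatible with these pushforwards, and invoking the inductive hypothesis for $E_1$ and for $\cB$, one deduces the identity for $\cA$; the one point to keep track of is the $[-1]$-equivariance of $\mfg_{(\cdot)^\vee}$, which enters because $\pol_{-\sigma}$ is indexed by $-\sigma$ while $\sigma^*\mfg_{\cA^\vee}$ has its singularity along $-\sigma(S)$. If one coordinate, say $\sigma'$, vanishes, then $(\sigma')^*\mfg_{\cB^\vee}-\mfg_{\cB^\vee}=0$ and the residue computation in the proof of Lemma \ref{mot-lem} gives $(\Id_{E_1\backslash E_1[N]}\times 0)_*\pol_{-\sigma_1}=\pol_{-\sigma}$ directly, so the same argument applies with the $E_1$-contribution alone. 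Splitting off the factors $E_1,\dots,E_g$ one at a time then completes the induction.

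The hard part will be the case $g=1$: one must identify precisely which Siegel unit represents the degree $0$ elliptic polylogarithm, and reconcile the normalisations of the $K_1$-regulator on units and of the canonical Green function, so as to recover the exact constant $-2$; this is where the classical theory of elliptic units and Kronecker's limit formula intervene. By contrast the product step is essentially formal once Lemmata \ref{mot-lem} and \ref{green-lemma} are available --- its only minor subtleties being the combinatorics of vanishing torsion coordinates and the sign bookkeeping coming from $[-1]$-equivariance.
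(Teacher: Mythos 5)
Your proposal follows essentially the same route as the paper: an explicit $g=1$ case resting on the closed formula for $\mfg_{E^\vee}$ in terms of sigma/theta functions together with classical elliptic-unit theory (the paper carries this out by a direct Weierstrass sigma-function computation, uses Robert's distribution relations from Kubert--Lang to get $\tr_{[a]}$-invariance, and kills the residual constant in $\bbC^*\otimes\bbQ$ via $l^{a^2}=l$), followed by the formal product step combining Lemmata \ref{mot-lem} and \ref{green-lemma}. The only differences are presentational: you cite Siegel units and Kronecker's limit formula where the paper does the corresponding computation by hand, and you leave that normalisation/constant-matching (which you correctly flag as the hard part, and which the paper settles precisely by the trace-invariance argument just mentioned) unexecuted.
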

\begin{proof}(of Lemma \ref{ellpart}) 
We first prove the statement when $g=1$. Let $E:=E_1=E_g$. Notice that the map 
\begin{eqnarray*}
&\cyc_\an&:H^1_\sM(E\backslash E[N],1)={\mathcal O}^*_{E\backslash E[N]}(E\backslash E[N])\otimes\bbQ\\
&\to& H^1_{D,\an}(E\backslash E[N],1)=\{f\in C^\infty((E\backslash E[N])(\bbC),\bbR)\ |\ {\rm dd}^cf=0\}
\end{eqnarray*} can be explicitly described by the formula $u\otimes r\mapsto r\log|u(\bbC)|$. 

Now let $\alpha$ be one of the given embeddings of $R$ into $\bbC$ (those that are part of the datum of an 
arithmetic ring).  Let 
$\bbC/[1,\tau_{E,\alpha}]\simeq E(\bbC)_\alpha$ be a presentation of $E(\bbC)_\alpha$ as a quotient 
of $\bbC$ by a lattice generated by $1$ and a complex number $\tau_{E,\alpha}$ with strictly positive imaginary part. 
Call $\lambda:\bbC\to E(\bbC)_\alpha$ the corresponding quotient map. 
Then by \cite[par. 7]{Loopaper}, we have
$$
\mfg_{E^\vee}(\lambda(z))=-2\log|e^{-z\cdot\eta(z)/2}{\rm sigma}(z)\Delta(\tau_{E,\alpha})^{1\over 12}|
$$
for all $z\not\in[1,\tau_{E,\alpha}]$. Here $\Delta(\bullet)$ is the discriminant modular form, ${\rm sigma}(z)$ is the Weierstrass sigma-function associated with 
the lattice $[1,\tau_{E,\alpha}]$ and $\eta$ is the quasi-period map associated with 
the lattice $[1,\tau_{E,\alpha}]$, extended $\bbR$-linearly to 
all of $\bbC$ (see \cite[I, Prop. 5.2]{Silverman-Advanced} for the latter). Let $z_0\in\bbC$ such that 
$\lambda(z_0)=-\sigma_\alpha$. We compute 
% z_0\mapsto -z_0 %%%%%%%%%%%%%%%%%%%%%%%%%%
\begin{eqnarray*}
&&|{e^{-(z-z_0)\cdot\eta(z-z_0)/2}{\rm sigma}(z-z_0)\over 
e^{-z\cdot\eta(z)/2}{\rm sigma}(z)}|\\
&=&
|{e^{-{1\over 2}(z\cdot\eta(z)-z\cdot\eta(z_0)-z_0\cdot\eta(z)+z_0\cdot\eta(z_0))}{\rm sigma}(z-z_0)\over 
e^{-z\cdot\eta(z)/2}{\rm sigma}(z)}|\\
&=&
|e^{{1\over 2}(z\cdot\eta(z_0)+z_0\cdot\eta(z)-z_0\cdot\eta(z_0))}{
{\rm sigma}(z-z_0)\over 
{\rm sigma}(z)}|\\
&=&
|e^{z\cdot\eta(z_0)-{1\over 2}z_0\cdot\eta(z_0)}{
{\rm sigma}(z-z_0)\over 
{\rm sigma}(z)}|.
\end{eqnarray*}
Here we used the Legendre relation for the quasi-period map on the last line (see 
\cite[I, Prop. 5.2, (d)]{Silverman-Advanced}). 
Let 
$$
\phi(z):=e^{z\cdot\eta(z_0)-{1\over 2}z_0\cdot\eta(z_0)}{
{\rm sigma}(z-z_0)\over 
{\rm sigma}(z)}.
$$
Now recall that the periodicity relation for the sigma function says that 
$$
{\rm sigma}(z+\omega)=\psi(\omega)e^{\eta(\omega)(z+\omega/2)}{\rm sigma}(z)
$$
for all $\omega\in [1,\tau_{E,\alpha}]$ (see \cite[I, Prop. 5.4 (c)]{Silverman-Advanced}). Here $\psi(\bullet)$ is a function with values in 
the set $\{-1,1\}$. This implies that 
$$
{{\rm sigma}(z+\omega-z_0)\over 
{\rm sigma}(z+\omega)}={{\rm sigma}(z-z_0)\over 
{\rm sigma}(z)}e^{-\eta(\omega)z_0}
$$
and thus that $$\phi(z+\omega)/\phi(z)=e^{\omega\cdot\eta(z_0)-\eta(\omega)z_0}:=\alpha(\omega,z_0).$$ The Legendre relation again implies that 
$\alpha(\bullet,z_0)$ defines a homomorphism of abelian groups $[1,\tau_{E,\alpha}]\to 
\bbC^*$ and that its image is a torsion group of order dividing $N$. 
We conclude that $\phi(z)^{N}$ is a $[1,\tau_{E,\alpha}]$-periodic function. 
Furthermore, $\phi(z)$ has a zero of order $1$ at $z_0$ and 
a pole of order $1$ at $0$. We see that after passage to the quotient, the function $\phi(z)$ 
defines an element $\phi_0\in {\mathcal O}^*(E(\bbC)_\alpha\backslash E(\bbC)_\alpha[N])\otimes\bbQ$,  whose divisor is $z_0\ominus 0$. Furthermore, the distribution relations of A. Robert (see \cite[par. 4, Th. 4.1]{Kubert-Lang-Modular}) 
show that $\tr_{[a]}(\phi_0)=\phi_0$ (we slighty abuse notation here). 

Now let  
$\widetilde{\phi}_0\in H^1_\sM(E\backslash E[N],1)^{(0)}$ be an element 
such that $$\cyc_\an(\widetilde{\phi}_0)|_{E(\bbC)_\alpha}=\log|\widetilde{\phi}_0|=
([-z_0]^*\mfg_{E^\vee}-\mfg_{E^\vee})|_{E(\bbC)_\alpha\backslash E(\bbC)_\alpha[N]}=-2\log|\phi_0|=\log|\phi_0^{-2}|$$ This exists by Lemma \ref{eqal}. Notice that both $\widetilde{\phi}_0$ and $\phi_0$ are invariant under $\tr_{[a]}$ and thus ${\widetilde{\phi}_0(\bbC)\over\phi_0^{-2}}$ 
is a constant $l\in\bbC^*\otimes\bbQ$, such that $\tr_{[a]}(l)=l^{a^2}=l$ and thus $l=1$.
 Thus we may compute that $\div(\widetilde{\phi}_0)=2\odot 0\ominus 2\odot z_0$. We conclude 
 that $\widetilde{\phi}_0=-2\cdot \pol_{z_0}$ and this concludes the proof of the lemma when $g=1.$ 
To prove the Lemma \ref{ellpart} in general, just combine the fact that Lemma 
\ref{ellpart} holds for 
$g=1$ with Lemmata \ref{green-lemma} and \ref{mot-lem}.

%%%%%%%%%%%%%%%%%%%%%%%%%%%%%%%%%%
%We now turn to the proof of (b). Let 
%$z\in H^{2g-1}_{\mathcal M}(\cA\backslash\cA[N],g))^{(0)}$. According to Lemma \ref{mot-lem}, there 
%are elements $z_i\in H^{1}_{\mathcal M}(E_{i}\backslash E_{i}[N],1))^{(0)}$ such that 
%$$
%z=\sum_{i=1}^g(\tau_{1,i}\times\dots \tau_{i-1,i}\times\Id_{E_i\backslash E_i[N]}\times \tau_{i+1,i}\dots\times \tau_{g,i})_*z_i
%$$
%where the $\Id_{E_i\backslash E_i[N]}$ term lies in the $i$-th slot. Let now $P_i\in E_i(\bbC)$ be such that $P_i\not\in E_{i,\bbC}[N](\bbC)$. Using Lemma \ref{green-lemma}, we compute in analytic Deligne cohomology 
%\begin{eqnarray*}
%&&(\tau_{1,i,\bbC}\times\dots\times \tau_{i-1,i,\bbC}\times\Id_{E_{i-1,\bbC}\backslash E_{i-1,\bbC}[N]}\times P_i\times 
%\tau_{i+1,i,\bbC}\dots\times \tau_{g,i,\bbC})^*
%\cyc_\an(z)\\
%&=&P_i^*(\cyc_\an(z_{i}))\cdot\delta_{P_i})\in H^{3}_{D,\an}(E_{i},\bbR(2))
%\end{eqnarray*}
%and
%$$
%\int_{E_i(\bbC)}P_i^*(\cyc_\an(z_{i}))\cdot\delta_{P_i}=P_i^*(\cyc_\an(z_{i}))
%$$
%Since $P_i\in E_i(\bbC)$ was arbitrary, this shows that if $\cyc_\an(z)=0$ then 
%$z_i=0$ for all $i$ and hence $z=0$. 
\end{proof}

Let $T$ be a an arithmetic variety over $R=k$ and $T\to S$ be a morphism of schemes over $R$. Let 
$\cA_T$ be the abelian scheme obtained by base-change and let ${\rm BC}, \BC_h$ and $\BC_l$ be as at the end of paragraph \ref{invpropres}. Let 
$$\BC_l^*:H_\sM^0(\cA[N]\backslash 
\epsilon_\cA(S),0)\to H_\sM^0(\cA_T[N]\backslash 
\epsilon_{\cA_T}(S),0)$$ and $$\BC_h^*:H^{2g-1}_{\sM}({\mathcal A}\backslash{\mathcal A}[N],g)^{(0)}\to H^{2g-1}_{\sM}({\mathcal A}_T\backslash{\mathcal A}_T[N],g)^{(0)}$$
be the natural pull-back maps.

\begin{lemma}
If $T$ is connected and $\cA[N]\simeq(\bbZ/N\bbZ)^{2g}_S$ then the maps $\BC_l^*$ and $\BC_h^*$
 are isomorphisms.
\label{isolem}
\end{lemma}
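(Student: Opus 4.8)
The plan is to reduce the assertion about $\BC_h^*$ to the corresponding assertion about $\BC_l^*$ by means of the commutative relation of Lemma \ref{rescom}, and to prove the assertion about $\BC_l^*$ by making the schemes involved completely explicit.

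First I would treat $\BC_l^*$. Under the hypothesis $\cA[N]\simeq(\bbZ/N\bbZ)^{2g}_S$, the scheme $\cA[N]\backslash\epsilon_\cA(S)$ is a disjoint union of copies $S_v$ of $S$, one for each nonzero $v\in(\bbZ/N\bbZ)^{2g}$, and base change along $T\to S$ turns this isomorphism into the analogous one $\cA_T[N]\simeq(\bbZ/N\bbZ)^{2g}_T$, giving the decomposition $\cA_T[N]\backslash\epsilon_{\cA_T}(T)=\coprod_v(S_v\times_S T)$, under which $\BC_l$ becomes, component by component, the projection $S_v\times_S T\to S_v$. Now $S$ is integral (being irreducible and smooth over $k$), so $H^0_\sM(S,0)=\Gr_\gamma^0 K_0(S)\otimes\bbQ=\bbQ$ via the rank; likewise each $S_v\times_S T\cong T$ is connected and regular, hence integral, so $H^0_\sM(S_v\times_S T,0)=\bbQ$. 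Since pull-back on $K_0$ preserves ranks, the induced map $H^0_\sM(S_v,0)\to H^0_\sM(S_v\times_S T,0)$ is the identity of $\bbQ$ for each $v$; summing over the finite set of nonzero $v$ shows that $\BC_l^*$ is an isomorphism.

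Next I would pass to the weight-$0$ generalized eigenspaces. Because $\BC$ commutes with the multiplication-by-$a$ morphism and with the relevant open immersions, $\BC_l^*$ intertwines the two copies of $\tr_{[a]}$; an equivariant linear isomorphism carries the generalized $0$-eigenspace of one side isomorphically onto that of the other, so $\BC_l^*$ induces an isomorphism $H^0_\sM(\cA[N]\backslash\epsilon_\cA(S),0)^{(0)}\isom H^0_\sM(\cA_T[N]\backslash\epsilon_{\cA_T}(T),0)^{(0)}$.

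Finally, for $\BC_h^*$ I would invoke Lemma \ref{rescom}, which gives $\BC_l^*\circ\rho_\cA=\rho_{\cA_T}\circ\BC_h^*$, with $\rho_\cA$ and $\rho_{\cA_T}$ the isomorphisms of Corollary \ref{corfund}. Three of the four maps in this relation are now known to be isomorphisms, hence so is $\BC_h^*=\rho_{\cA_T}^{-1}\circ\BC_l^*\circ\rho_\cA$. I do not expect a genuine obstacle here: the argument is essentially bookkeeping, the only points needing a little care being the identity $H^0_\sM(X,0)=\bbQ$ for connected (hence integral) $X$ and the fact that $\BC_l$ respects the product decomposition of the $N$-torsion, both immediate from $\cA[N]\simeq(\bbZ/N\bbZ)^{2g}_S$.
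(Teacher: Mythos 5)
Your proposal is correct and follows the paper's own route: the pull-back $\BC_l^*$ is an isomorphism because the split level structure makes $\cA[N]\backslash\epsilon_\cA(S)$ (and its base change) a disjoint union of connected components matched by $\BC_l$, and then $\BC_h^*$ is an isomorphism via Lemma \ref{rescom} together with the isomorphisms $\rho_\cA$ and $\rho_{\cA_T}$ of Corollary \ref{corfund}. You merely spell out the component/rank computation that the paper dismisses as "by construction," which is a harmless (and welcome) elaboration.
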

\begin{proof}
The map $\BC_l^*$ is an isomorphism by construction and the fact that the map $\BC_h^*$ is an isomorphism 
follows from the fact that $\BC_l^*$ is an isomorphism, from Lemma \ref{rescom} and from the isomorphisms
 $$
\rho_\cA:H^{2g-1}_\sM({\mathcal A}\backslash{\mathcal A}[N],g)^{(0)}\isom
H^0_\sM({\mathcal A}[N]\backslash\epsilon_{\cA}(S),0).
$$
and 
 $$
\rho_{\cA_T}:H^{2g-1}_\sM({\mathcal A}_T\backslash{\mathcal A}_T[N],g)^{(0)}\isom
H^0_\sM({\mathcal A}_T[N]\backslash\epsilon_{\cA_T}(S),0).
$$
\end{proof}

\begin{lemma} 
 Let $\sigma\in\cA[N](S)\backslash\epsilon_\cA(S)$. We have
$$
\BC^*_h\pol_\sigma=\pol_{\sigma_T}
$$
and
$$
\BC^*_h\big(\sigma^*\mfg_{\cA^\vee}-\mfg_{\cA^\vee})|_{{\mathcal A}\backslash{\mathcal A}[N]}\big)=(\sigma_T^*\mfg_{\cA^\vee_T}-\mfg_{\cA^\vee_T})|_{{\mathcal A}_T\backslash{\mathcal A}_T[N]}.
$$
\label{BClem}
\end{lemma}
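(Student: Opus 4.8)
The plan is to reduce both identities to base-change compatibilities that have already been recorded: the first to the base-change compatibility of the isomorphism $\rho$ (equivalently, of the motivic polylogarithm), and the second to the base-change property $\BC^*\mfg_{\cA^\vee}=\mfg_{\cA_T^\vee}$ taken from \cite{Loopaper}.

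For the first identity, note first that $[a]$ commutes with base change, so $\BC_h$ intertwines the two $a$-multiplications and hence $\BC_h^*$ commutes with $\tr_{[a]}$; in particular $\BC_h^*\pol_\sigma$ lies in $H^{2g-1}_\sM(\cA_T\backslash\cA_T[N],g)^{(0)}$, on which $\rho_{\cA_T}$ is an isomorphism by Corollary \ref{corfund} (applied to each connected component of $T$). By Lemma \ref{rescom} we have $\BC_l^*\circ\rho_\cA=\rho_{\cA_T}\circ\BC_h^*$, so
$$
\rho_{\cA_T}(\BC_h^*\pol_\sigma)=\BC_l^*(\rho_\cA(\pol_\sigma))=\BC_l^*([\sigma]),
$$
where $[\sigma]$ denotes the class of the open and closed subscheme $\sigma(S)$ of $\cA[N]\backslash\epsilon_\cA(S)$. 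Since $\sigma_T$ is the base change of $\sigma$, the subscheme $\sigma_T(T)$ is exactly $\BC_l^{-1}(\sigma(S))$, so $\BC_l^*[\sigma]=[\sigma_T]=\rho_{\cA_T}(\pol_{\sigma_T})$; the injectivity of $\rho_{\cA_T}$ then gives $\BC_h^*\pol_\sigma=\pol_{\sigma_T}$.

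For the second identity, recall that $\sigma^*$ denotes pullback along the translation $t_\sigma\colon\cA\to\cA$ by the $N$-torsion section $\sigma$. This is an $S$-automorphism of $\cA$ and base-changes to the translation $t_{\sigma_T}$ on $\cA_T$, so $t_\sigma\circ\BC=\BC\circ t_{\sigma_T}$ as morphisms of arithmetic varieties (and of the associated complex manifolds). Using the contravariant functoriality of the pullback on the spaces $\widetilde{D}^{g-1,g-1}_{Z}$ from \cite[Th. 4.3]{Burgos-Litcanu-Singular}, together with the identity $\BC^*\mfg_{\cA^\vee}=\mfg_{\cA_T^\vee}$ (legitimate because $\BC(\bbC)$ is transverse to $S_0(\bbC)$) and the fact that pullback along the isomorphism $t_{\sigma_T}$ is unproblematic, I obtain
$$
\BC^*(\sigma^*\mfg_{\cA^\vee})=t_{\sigma_T}^*(\BC^*\mfg_{\cA^\vee})=t_{\sigma_T}^*\mfg_{\cA_T^\vee}=\sigma_T^*\mfg_{\cA_T^\vee}.
$$
Subtracting $\BC^*\mfg_{\cA^\vee}=\mfg_{\cA_T^\vee}$ and restricting along the open immersion $\cA_T\backslash\cA_T[N]\hookrightarrow\cA_T$ — which is compatible with $\BC_h^*$ because $\BC$ restricts to $\BC_h$ — yields the asserted equality of classes in $H^{2g-1}_{D,\an}((\cA_T\backslash\cA_T[N])_\bbR,\bbR(g))$.

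The only point requiring genuine care is the bookkeeping of domains for the pullback operations on currents: one must check that each pullback invoked (along $\BC$, along $t_{\sigma_T}$, and along the two open immersions) indeed lands in the correct quotient space $\widetilde{D}^{g-1,g-1}_{Z}$ with the prescribed bound on wave-front sets, and that the composites coincide with the pullback along the composite morphism. This is precisely what the functoriality statement of \cite{Burgos-Litcanu-Singular} provides, so beyond this verification the argument is entirely formal.
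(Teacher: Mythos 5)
Your proposal is correct and follows essentially the same route as the paper's (very terse) proof: the first identity via the commutation of $\BC_h^*$ with $\tr_{[a]}$ together with Lemma \ref{rescom} and the characterisation of $\pol$ by its residue, and the second via the base-change property $\BC^*\mfg_{\cA^\vee}=\mfg_{\cA_T^\vee}$ recorded after Theorem \ref{mainth0}, combined with the compatibility of translation by $\sigma$ with base change. The extra bookkeeping you supply (wave-front sets, transversality, functoriality of pullback from \cite{Burgos-Litcanu-Singular}) is exactly what the paper leaves implicit, so nothing is missing.
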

\begin{proof}
The second equality is a consequence of the remarks after Theorem \ref{mainth0}. The 
first  equality is 
a consequence of the fact that $\BC_h^*\circ\tr_{a,\cA}=\tr_{a,\cA_T}\circ\BC_h^*$ and 
of Lemma \ref{rescom}.
% Quillen LNM 341
\end{proof}
 
\begin{lemma}\label{implem}
Suppose that  $\cA[N]\simeq(\bbZ/N\bbZ)^{2g}_S$. Suppose also that for 
some point $s\in S(k)$, the abelian scheme $\cA_{s}$ is a product of 
elliptic curves over $k$. Then Proposition \ref{polgpr} holds for $\cA\to S$. 
\end{lemma}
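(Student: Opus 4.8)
The plan is to deduce Proposition \ref{polgpr} for $\cA\to S$ from the already-established case of products of elliptic curves (Lemma \ref{ellpart}) by a base-change argument to the fibre $\cA_s$. Note first that assertion (a) of Proposition \ref{polgpr} is unconditional, being exactly Lemma \ref{injlem}, so only assertion (b) needs to be addressed. I would begin by viewing the given point $s\in S(k)$ as a morphism $T:=\Spec k\to S$; then $T$ is an arithmetic variety over $R=k$, $\cA_T=\cA_s$ is by hypothesis a product of elliptic curves over $k$, and we have the morphisms $\BC\colon\cA_s\to\cA$, $\BC_h\colon\cA_s\backslash\cA_s[N]\to\cA\backslash\cA[N]$, $\BC_l\colon\cA_s[N]\backslash\epsilon_{\cA_s}(k)\to\cA[N]\backslash\epsilon_\cA(S)$ as at the end of paragraph \ref{invpropres}. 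Since $T$ is connected and $\cA[N]\simeq(\bbZ/N\bbZ)^{2g}_S$, Lemma \ref{isolem} gives that $\BC_h^*\colon H^{2g-1}_\sM(\cA\backslash\cA[N],g)^{(0)}\to H^{2g-1}_\sM(\cA_s\backslash\cA_s[N],g)^{(0)}$ is an isomorphism.

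Fix $\sigma\in\cA[N](S)\backslash\epsilon_\cA(S)$; since $\cA[N]$ is constant and $S$ connected, also $\sigma_s\in\cA_s[N](k)\backslash\epsilon_{\cA_s}(k)$ and $-\sigma\in\cA[N](S)\backslash\epsilon_\cA(S)$. Restricting the conclusion of Lemma \ref{eqal} from $\cA\backslash\{\epsilon_\cA(S),-\sigma(S)\}$ to the open $\cA\backslash\cA[N]$, and using the injectivity of $\cyc_\an$ on the $(0)$-eigenspace (Lemma \ref{injlem}), there is a unique class $\eta\in H^{2g-1}_\sM(\cA\backslash\cA[N],g)^{(0)}$ with $\cyc_\an(\eta)=(\sigma^*\mfg_{\cA^\vee}-\mfg_{\cA^\vee})|_{\cA\backslash\cA[N]}$; here one uses, exactly as in the proof of Lemma \ref{eqal}, that the Jordan decomposition of $\tr_{[a]}$ lets the motivic lift be taken inside the $(0)$-eigenspace. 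By that same injectivity, assertion (b) of Proposition \ref{polgpr} is equivalent to the identity $\eta=-2\cdot\pol_{-\sigma}$ in $H^{2g-1}_\sM(\cA\backslash\cA[N],g)^{(0)}$, and since $\BC_h^*$ is an isomorphism it suffices to check this after applying $\BC_h^*$.

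I would then compare both sides after pull-back. By Lemma \ref{BClem} (applied to the section $-\sigma$) we have $\BC_h^*(-2\cdot\pol_{-\sigma})=-2\cdot\pol_{-\sigma_s}$; and, using the functoriality of $\cyc_\an$ for pull-backs of arithmetic varieties together with Lemma \ref{BClem} (applied to $\sigma$), $\cyc_\an(\BC_h^*\eta)=\BC_h^*(\cyc_\an(\eta))=\BC_h^*\bigl((\sigma^*\mfg_{\cA^\vee}-\mfg_{\cA^\vee})|_{\cA\backslash\cA[N]}\bigr)=(\sigma_s^*\mfg_{\cA_s^\vee}-\mfg_{\cA_s^\vee})|_{\cA_s\backslash\cA_s[N]}$. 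Now $\cA_s$ is a product of elliptic curves over $k$, so by Lemma \ref{ellpart} the last current equals $-2\cdot\cyc_\an(\pol_{-\sigma_s})=\cyc_\an(\BC_h^*(-2\cdot\pol_{-\sigma}))$. Hence $\cyc_\an(\BC_h^*\eta)=\cyc_\an(\BC_h^*(-2\cdot\pol_{-\sigma}))$, and the injectivity of $\cyc_\an$ on the $(0)$-eigenspace of $\cA_s\backslash\cA_s[N]$ (Lemma \ref{injlem} for $\cA_s$) forces $\BC_h^*\eta=\BC_h^*(-2\cdot\pol_{-\sigma})$. As $\BC_h^*$ is injective, $\eta=-2\cdot\pol_{-\sigma}$, which is the desired identity and establishes Proposition \ref{polgpr} for $\cA\to S$.

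The argument is essentially a bookkeeping assembly of the earlier results, so I do not expect a genuinely hard new step. The one point that needs care is that the Green-current difference $(\sigma^*\mfg_{\cA^\vee}-\mfg_{\cA^\vee})|_{\cA\backslash\cA[N]}$ must be recognised as lying in $\cyc_\an$ of a motivic class that sits in the $(0)$-eigenspace \emph{over $\cA$ itself} --- this is exactly what Lemma \ref{eqal} supplies --- rather than being descended along $\BC_h^*$, because the pull-back on analytic Deligne cohomology is not known to be injective; only the motivic pull-back on the $(0)$-eigenspace can be inverted, via Lemma \ref{isolem}.
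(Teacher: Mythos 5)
Your argument is correct and is essentially the paper's own proof: the paper likewise takes $T=\Spec k$ via $s$, uses Lemma \ref{eqal} to produce a motivic lift of the current in the $(0)$-eigenspace over $\cA$, and concludes from the injectivity of $\cyc_\an\circ\BC_h^*$ (Lemmata \ref{injlem} and \ref{isolem}) together with Lemmata \ref{BClem} and \ref{ellpart} on the fibre. Your bookkeeping of the factor $-2$ is in fact more careful than the paper's, which drops it in the displayed identities of its proof.
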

\begin{proof}
Let $T:=\Spec\ k$ and let $T\to S$ be the closed immersion given by $s$. By Lemmata \ref{injlem} and \ref{isolem},  the map
$$
\cyc_\an\circ \BC_h^*=\BC_h^*\circ\cyc_\an:H^{2g-1}_{\mathcal M}(\cA\backslash\cA[N],g))^{(0)}\to H^{2g-1}_{D,\an}((\cA_s\backslash\cA_s[N])_\bbR,\bbR(g))
$$
is injective. Now let $\phi\in H^{2g-1}_{\mathcal M}(\cA\backslash\cA[N],g))^{(0)}$ be an element 
such that $$-\cyc_\an(\phi)=(\sigma^*\mfg_{\cA^\vee}-\mfg_{\cA^\vee})|_{\cA\backslash\cA[N]}.$$
The element $\phi$ exists by Lemma \ref{eqal}. Then by Lemma \ref{BClem} and Lemma \ref{ellpart}, 
we have $$-\BC_h^*(\cyc_\an(\phi-\pol_{-\sigma}))=0$$ whence $\phi=\pol_{-\sigma}$. 
\end{proof}

\begin{lemma}\label{consttor}
Let $M\in\bbN^*$. 
To prove Proposition \ref{polgpr}, it is sufficient to prove it under the supplementary assumption that 
$\cA[M]\simeq(\bbZ/M\bbZ)^{2g}_S$.
\end{lemma}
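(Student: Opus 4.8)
The plan is to use the standard trick of passing to a cover that trivializes the $M$-torsion and then descending. First I would choose a connected finite étale cover $S'\to S$ over which $\cA[M]$ becomes constant, i.e.\ $\cA_{S'}[M]\simeq(\bbZ/M\bbZ)^{2g}_{S'}$; such an $S'$ exists because $\cA[M]\to S$ is finite étale, and after replacing $S'$ by a connected component we may assume $S'$ is itself irreducible, smooth and quasi-projective over $k$ (shrinking if necessary, which is harmless since all our constructions are compatible with further base change by the Proposition after Definition \ref{mot-pol-def} and by the remarks after Theorem \ref{mainth0}). Writing $f:S'\to S$ for the structure map and $\BC:\cA_{S'}\to\cA$ for the induced morphism on abelian schemes, with $\BC_h$ and $\BC_l$ its restrictions as in paragraph \ref{invpropres}, the hypothesis of Proposition \ref{polgpr} would then be known for $\cA_{S'}\to S'$.

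Next I would transport this back to $S$. Given $\sigma\in\cA[N](S)\backslash\epsilon_\cA(S)$, let $\sigma'=\sigma_{S'}\in\cA_{S'}[N](S')$ be its pullback; since $\sigma$ avoids the zero section and $f$ is surjective, $\sigma'$ also avoids $\epsilon_{\cA_{S'}}(S')$. By Lemma \ref{BClem} we have $\BC_h^*\pol_{-\sigma}=\pol_{-\sigma'}$ and $\BC_h^*\big((\sigma^*\mfg_{\cA^\vee}-\mfg_{\cA^\vee})|_{\cA\backslash\cA[N]}\big)=(\sigma'^*\mfg_{\cA_{S'}^\vee}-\mfg_{\cA_{S'}^\vee})|_{\cA_{S'}\backslash\cA_{S'}[N]}$, and since $\cyc_\an$ commutes with $\BC_h^*$, the statement of Proposition \ref{polgpr}(b) for $\cA_{S'}\to S'$ gives
$$
\BC_h^*\Big(-2\cdot\cyc_\an(\pol_{-\sigma})-(\sigma^*\mfg_{\cA^\vee}-\mfg_{\cA^\vee})|_{\cA\backslash\cA[N]}\Big)=0.
$$
So it remains to see that $\BC_h^*$ is injective on $H^{2g-1}_{\mathcal M}(\cA\backslash\cA[N],g)^{(0)}$ (for part (b)), and separately to handle part (a). For injectivity: by the commutative square $\BC_l^*\circ\rho_\cA=\rho_{\cA_{S'}}\circ\BC_h^*$ of Lemma \ref{rescom} and the isomorphisms $\rho_\cA$, $\rho_{\cA_{S'}}$ of Corollary \ref{corfund}, it suffices that $\BC_l^*:H^0_\sM(\cA[N]\backslash\epsilon_\cA(S),0)^{(0)}\to H^0_\sM(\cA_{S'}[N]\backslash\epsilon_{\cA_{S'}}(S'),0)^{(0)}$ be injective; and here $H^0_\sM(-,0)^{(0)}$ of a smooth $k$-scheme is just the $\bbQ$-vector space on its connected components (intersected with the relevant eigenspace), on which pullback along a surjective map is injective. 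Part (a) of Proposition \ref{polgpr} is the assertion of Lemma \ref{injlem}, whose proof is independent of the trivialization hypothesis and so carries over verbatim; alternatively one notes that injectivity of $\cyc_\an$ on the eigenspace for $\cA\to S$ follows from the diagram $\cyc_\an\circ\BC_h^*=\BC_h^*\circ\cyc_\an$ once injectivity of $\cyc_\an$ is known after the base change, exactly as in the proof of Lemma \ref{implem}.

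The main obstacle I anticipate is purely bookkeeping: ensuring that the cover $S'$ can be taken inside the class of base schemes allowed in the paper (irreducible, smooth, quasi-projective over $k$) and that $\sigma'$ still lies in the complement of the zero section, so that all the cited lemmata (\ref{rescom}, \ref{BClem}, \ref{injlem}) genuinely apply to the pair $(\cA_{S'},S')$. None of these points is serious — a connected component of a finite étale cover of a smooth quasi-projective irreducible $k$-scheme is again of this type, and pullback of a section disjoint from $\epsilon_\cA(S)$ along any base change stays disjoint from the zero section — but they are the only things that need to be checked with care. Once they are in place, Lemma \ref{consttor} follows formally.
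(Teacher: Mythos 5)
Your overall route coincides with the paper's: pass to a cover of $S$ over which the $M$-torsion becomes constant, transport the statement of Proposition \ref{polgpr} through $\BC_h^*$ using Lemma \ref{BClem}, and reduce everything to injectivity of $\BC_h^*$. Two genuine differences are worth recording. First, the paper chooses $T\to S$ proper and generically finite, while you take a connected finite \'etale cover $S'\to S$; in characteristic zero both exist and a connected finite \'etale cover of an irreducible smooth quasi-projective $k$-scheme is again an admissible base, so your choice is a harmless (in fact slightly cleaner) special case. Second, and more substantially, the paper proves injectivity of $\BC_h^*$ on $H^{2g-1}_\sM(\cA\backslash\cA[N],g)^{(0)}$ by the projection formula together with $\BC_{h,*}(1)=\deg(\BC_h)$, whereas you deduce it from Lemma \ref{rescom}, the isomorphism $\rho_\cA$ of Corollary \ref{corfund}, and injectivity of $\BC_l^*$ on $H^0_\sM$ (i.e.\ on $\bbQ$-valued functions on connected components, pulled back along a surjective map). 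Your argument is valid and more elementary in spirit, at the cost of invoking the residue machinery where the paper only needs properness of $\BC_h$ and the push-forward.

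One step needs tightening. For part (b) you write that, after establishing $\BC_h^*\bigl(-2\cyc_\an(\pol_{-\sigma})-(\sigma^*\mfg_{\cA^\vee}-\mfg_{\cA^\vee})|_{\cA\backslash\cA[N]}\bigr)=0$, ``it remains to see that $\BC_h^*$ is injective on $H^{2g-1}_\sM(\cA\backslash\cA[N],g)^{(0)}$''. That is not literally sufficient: the difference in question lives in analytic Deligne cohomology (resp.\ in $\widetilde{D}^{g-1,g-1}(\cA_\bbR)$), not in motivic cohomology, so motivic injectivity of $\BC_h^*$ says nothing about it directly. The missing ingredient is Lemma \ref{eqal}: write $(\sigma^*\mfg_{\cA^\vee}-\mfg_{\cA^\vee})|_{\cA\backslash\cA[N]}=\cyc_\an(\phi)$ with $\phi$ in the motivic $(0)$-eigenspace; then the vanishing above reads $\cyc_\an\bigl(\BC_h^*(\phi+2\pol_{-\sigma})\bigr)=0$, and since $\BC_h^*$ commutes with $\tr_{[a]}$, injectivity of $\cyc_\an$ on the $(0)$-eigenspace upstairs (Lemma \ref{injlem}, which holds with no trivialisation hypothesis) combined with your injectivity of $\BC_h^*$ yields $\phi=-2\pol_{-\sigma}$ and hence (b). This is precisely why the paper's proof cites Lemma \ref{eqal} alongside Lemma \ref{BClem}, and it mirrors the proof of Lemma \ref{implem} that you yourself point to; with this one-line repair your plan is complete. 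Your remark that part (a) is just Lemma \ref{injlem} and requires no reduction is correct.
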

\begin{proof}
We may choose $T\to S$, such that $T\to S$ is proper and generically finite and such that 
$\cA_T[M]\simeq(\bbZ/M\bbZ)^{2g}_T$. In view of this as well as Lemma \ref{BClem} and Lemma 
\ref{eqal}, we see that 
it is sufficient to show that the map $\BC_h^*$ 
is injective when $T\to S$ is proper and generically finite. This is a consequence of the fact that 
$\BC_h$ is then also proper and generically finite, of the projection formula, and 
of the fact that $\BC_{h,*}(1)=\deg(\BC_h)$. 
\end{proof}

{\it Proof of Proposition \ref{polgpr}.} 
 
Let $M\in\bbN^*$. Suppose that $N|M$ and that $M>4$. By Lemma \ref{consttor}, 
we may suppose without restriction of generality that $\cA[M]\simeq (\bbZ/M\bbZ)^{2g}_S$. Now 
by Lemma \ref{BClem}, to prove Proposition \ref{polgpr}, it 
is sufficient to show that there exists 

- $T$, an arithmetic variety over $R$;

- $\cB\to T$, an abelian scheme of relative dimension $g$ such that $\cB[M]\simeq (\bbZ/M\bbZ)^{2g}_T$ and such that Proposition \ref{polgpr} holds for $\cB$; 

- an $R$-morphism $S\to T$, such that $\cA\simeq \cB\times_T S$.

Before trying to determine a suitable $\cB$, equip $\cA\to S$ with a polarisation 
and let $d$ be the degree of the latter. 
As a candidate for $\cB$, we consider the universal abelian scheme over the (fine) moduli space ${\rm A}_{g,d,M}/k$ of abelian varieties of dimension $g$, equipped with an $M$-level structure and a polarisation of 
degree $d$ (see for instance \cite{Moret-Bailly-Pinceaux} for more details). Proposition \ref{polgpr} holds for $\cB$ by Lemma \ref{implem} and there exists a $k$-morphism $S\to T$, such $\cA\simeq \cB\times_T S$, because ${\rm A}_{g,d,M}/k$ is a moduli space.

\subsubsection{Proof of Theorem \ref{polg}} 

\label{ssspolg}

\begin{lemma}\label{consttorth}
Let $M\in\bbN^*$. 
To prove Theorem \ref{polg}, it is sufficient to prove it under the supplementary assumption that 
$\cA[M]\simeq(\bbZ/M\bbZ)^{2g}_S$.
\end{lemma}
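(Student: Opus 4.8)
The plan is to mirror the short proof of Lemma~\ref{consttor}, the extra wrinkles being that Theorem~\ref{polg} involves $\pol_{\sM,1_N^\circ,\cA,a}^0$ and $[N]^*\mfg_{\cA^\vee}-N^{2g}\mfg_{\cA^\vee}$ rather than $\pol_\sigma$ and $\sigma^*\mfg_{\cA^\vee}-\mfg_{\cA^\vee}$, and that one must first turn the identity of Theorem~\ref{polg} into an identity in motivic cohomology. Note at the outset that the second (injectivity) assertion of Theorem~\ref{polg} is exactly Lemma~\ref{injlem} and holds with no hypothesis on the level structure, so only the equality $-2\cdot\cyc_\an(\pol_{\sM,1_N^\circ,\cA,a}^0)=([N]^*\mfg_{\cA^\vee}-N^{2g}\cdot\mfg_{\cA^\vee})|_{\cA\backslash\cA[N]}$ has to be reduced. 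Since $k$ has characteristic $0$, the scheme $\underline{\Isom}_S\big((\bbZ/M\bbZ)^{2g}_S,\cA[M]\big)$ is finite étale and surjective over the connected base $S$; any connected component $T$ of it is finite étale and surjective over $S$, is an arithmetic variety over $R=k$ (regular, being étale over $S$, and quasi-projective over $k$, being finite over $S$), and satisfies $\cA_T[M]\isom(\bbZ/M\bbZ)^{2g}_T$. Let $\BC$, $\BC_h$, $\BC_l$ be as at the end of paragraph~\ref{invpropres}.

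Next I would recast the equality as an identity in motivic cohomology. By a computation analogous to the one in the proof of Lemma~\ref{eqal} — using properties (b), (c) of Theorem~\ref{mainth0}, the multiplicativity of $\ari{\ch}$ and the isometry $([N]_\cA\times\id)^*\overline{\mathcal P}\isom\overline{\mathcal P}^{\otimes N}$ — one gets $[N]^*(S_0,\mfg_{\cA^\vee})=N^{2g}(S_0,\mfg_{\cA^\vee})$ in $\ari{\rm CH}^g(\cA)_\bbQ$; restricting to $\cA\backslash\cA[N]$, where the cycle parts vanish (as $S_0\subset\cA[N]$), and invoking the exact sequence~\eqref{arfundseq} together with the $\tr_{[a]}$-equivariance of $\cyc_\an$ and the Jordan decomposition, one obtains a class $\xi_\cA\in H^{2g-1}_\sM(\cA\backslash\cA[N],g)^{(0)}$ with $\cyc_\an(\xi_\cA)=([N]^*\mfg_{\cA^\vee}-N^{2g}\cdot\mfg_{\cA^\vee})|_{\cA\backslash\cA[N]}$; it is unique by Lemma~\ref{injlem}. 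Thus the equality in Theorem~\ref{polg} is equivalent to the purely motivic identity $\xi_\cA=-2\cdot\pol_{\sM,1_N^\circ,\cA,a}^0$.

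I would then apply $\BC_h^*$ to this identity. It preserves the weight-$0$ eigenspace because it commutes with $\tr_{[a]}$ (Lemma~\ref{BClem}). Functoriality of $\cyc_\an$, the commutation of $\BC^*$ with $[N]^*$ and with restriction to the complement of $\cA[N]$, and the base-change identity $\BC^*\mfg_{\cA^\vee}=\mfg_{\cA_T^\vee}$ (remarks after Theorem~\ref{mainth0}) give $\cyc_\an(\BC_h^*\xi_\cA)=\cyc_\an(\xi_{\cA_T})$, whence $\BC_h^*\xi_\cA=\xi_{\cA_T}$ by Lemma~\ref{injlem} applied to $\cA_T$. On the other hand, compatibility of the polylogarithm with base change, together with Lemma~\ref{rescom} and the equality $\BC_l^*(1_N^\circ)=1_N^\circ$ — which holds because $1_N^\circ$ is the fundamental class of $\cA[N]\backslash\epsilon_\cA(S)$ and $\BC_l$ is finite étale, hence flat — yields $\BC_h^*\pol_{\sM,1_N^\circ,\cA,a}^0=\pol_{\sM,1_N^\circ,\cA_T,a}^0$. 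Therefore, if Theorem~\ref{polg} holds for $\cA_T\to T$, then $\BC_h^*\big(\xi_\cA+2\cdot\pol_{\sM,1_N^\circ,\cA,a}^0\big)=\xi_{\cA_T}+2\cdot\pol_{\sM,1_N^\circ,\cA_T,a}^0=0$.

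Finally I would conclude by injectivity of $\BC_h^*$ on $H^{2g-1}_\sM(\cA\backslash\cA[N],g)^{(0)}$. The morphism $\BC_h:\cA_T\backslash\cA_T[N]\to\cA\backslash\cA[N]$ is proper (base change of the proper $T\to S$) and finite of degree $e=\deg(T/S)\geqslant 1$; hence, by the projection formula and $\BC_{h,*}(1)=\deg(\BC_h)=e$, one has $\BC_{h,*}\circ\BC_h^*=e\cdot\id$, so $\BC_h^*$ is injective on these $\bbQ$-vector spaces, and since $\BC_{h,*}$ also commutes with $\tr_{[a]}$ the injectivity survives on the weight-$0$ eigenspaces. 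We conclude $\xi_\cA=-2\cdot\pol_{\sM,1_N^\circ,\cA,a}^0$, as desired. The one point that is not purely formal is that the transfer argument must be carried out in motivic cohomology rather than directly on currents — there is no reason for $\BC^*$ to be injective on analytic Deligne cohomology — which is exactly why the reduction to a motivic identity via Lemma~\ref{injlem} is made; everything else (the identification $\BC_l^*(1_N^\circ)=1_N^\circ$, the base-change compatibilities of $\mfg_{\cA^\vee}$ and of the polylogarithm, the projection formula) is routine.
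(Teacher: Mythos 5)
Your proof is correct and follows essentially the same route as the paper, which omits the argument with the remark that it is ``similar to the proof of Lemma \ref{consttor}'': pass to a cover $T\to S$ trivialising $\cA[M]$, transport both sides along $\BC_h^*$ using Lemma \ref{BClem}-type base-change compatibilities and Lemma \ref{rescom}, and conclude by injectivity of $\cyc_\an$ on the weight-$0$ part (Lemma \ref{injlem}) together with injectivity of $\BC_h^*$ via the projection formula. The only differences are cosmetic improvements: you take $T$ finite \'etale (a component of the level-structure scheme) rather than merely proper and generically finite, and you make explicit the needed analogue of Lemma \ref{eqal} for $([N]^*\mfg_{\cA^\vee}-N^{2g}\mfg_{\cA^\vee})|_{\cA\backslash\cA[N]}$ via the isometry $([N]\times\id)^*\overline{\cP}\isom\overline{\cP}^{\otimes N}$, a point the paper leaves implicit.
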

\begin{proof}
The proof is similar to the proof of Lemma \ref{consttor} and will be omitted.
\end{proof}
\begin{lemma}
Suppose that $\cA[N]\simeq(\bbZ/N\bbZ)^{2g}_S$. For any $\sigma\in \cA[N](S)$, let 
$$x_\sigma:=\sigma^*\mfg_{\cA^\vee}-\mfg_{\cA^\vee}.$$ Then we have
$$\sum_\sigma x_\sigma=[N]^*\mfg_{\cA^\vee}-N^{2g}\cdot \mfg_{\cA^\vee}.$$
\label{prothlink}
\end{lemma}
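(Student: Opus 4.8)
The plan is to reduce the statement to the single identity $[N]^*\mfg_{\cA^\vee}=\sum_{\sigma\in\cA[N](S)}\sigma^*\mfg_{\cA^\vee}$ and then to rearrange. Throughout, $\sigma^*$ denotes pull-back along the translation-by-$\sigma$ automorphism $\tau_\sigma\colon\cA\to\cA$, so that (consistently with the notation in Lemma \ref{eqal}) $x_\sigma=\tau_\sigma^*\mfg_{\cA^\vee}-\mfg_{\cA^\vee}$; note in particular $x_0=0$.

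First I would use the hypothesis $\cA[N]\simeq(\bbZ/N\bbZ)^{2g}_S$ to observe that $[N]\colon\cA\to\cA$ is a finite étale Galois cover whose Galois group is $\cA[N](S)$, acting on the source by the translations $\tau_\sigma$; in particular $\cA[N](S)$ has exactly $N^{2g}$ elements. For any finite étale Galois cover $f\colon X\to Y$ with group $G$ one has $f^*\circ f_*=\sum_{g\in G}g^*$ as operators on currents on $X$: the fibres being $0$-dimensional, $f_*$ is fibrewise summation and $f^*$ is a local isomorphism. Since $[N]$ is étale and each $\tau_\sigma$ is an isomorphism, both are transverse to $S_0(\bbC)$, so all the operators $[N]^*$, $[N]_*$, $\tau_\sigma^*$ act on $\widetilde{D}^{g-1,g-1}_{S_0}(\cA_\bbR)$, to which $\mfg_{\cA^\vee}$ belongs by the remarks following Theorem \ref{mainth0}, and the displayed identity descends to that group. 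Applying it to $\mfg_{\cA^\vee}$ and invoking property (c) of Theorem \ref{mainth0}, which gives $[N]_*\mfg_{\cA^\vee}=\mfg_{\cA^\vee}$, yields
$$
[N]^*\mfg_{\cA^\vee}=[N]^*\big([N]_*\mfg_{\cA^\vee}\big)=\sum_{\sigma\in\cA[N](S)}\tau_\sigma^*\mfg_{\cA^\vee}=\sum_{\sigma\in\cA[N](S)}\sigma^*\mfg_{\cA^\vee}.
$$
Subtracting $N^{2g}\cdot\mfg_{\cA^\vee}$ from both sides and using that $\cA[N](S)$ has $N^{2g}$ elements then gives
$$
[N]^*\mfg_{\cA^\vee}-N^{2g}\cdot\mfg_{\cA^\vee}=\sum_{\sigma\in\cA[N](S)}\big(\sigma^*\mfg_{\cA^\vee}-\mfg_{\cA^\vee}\big)=\sum_\sigma x_\sigma,
$$
which is the assertion.

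The only delicate point is the justification of the formula $f^*f_*=\sum_{g\in G}g^*$ at the level of the quotients $\widetilde{D}^{\bullet,\bullet}_{\bullet}$ of currents modulo $\im\,\partial+\im\,\bar\partial$ with wave-front conditions, rather than merely for smooth forms; this is routine given the functoriality of these groups for transverse morphisms recalled in the first subsection, but it is the step I would write out with some care, checking in particular the compatibility of $[N]^*$ with the $\tau_\sigma^*$ via the local sections of $[N]$ furnished by translations.
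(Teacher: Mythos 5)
Your proposal is correct and follows essentially the same route as the paper: both rest on the operator identity $[N]^*[N]_*=\sum_\sigma\sigma^*$ on currents (the paper obtains it by dualising the corresponding identity for differential forms, you via the Galois description of the étale cover $[N]$), combined with $[N]_*\mfg_{\cA^\vee}=\mfg_{\cA^\vee}$ from Theorem \ref{mainth0}(c) and the count $|\cA[N](S)|=N^{2g}$.
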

\begin{proof}
If $\eta$ is a differential form on $\cA(\bbC)$, we have from the definitions that 
$$
[N]^*[N]_*\eta=\sum_\sigma\sigma_*\eta
$$
Dualising this statement for currents, we obtain that 
$$
[N]^*[N]_*\mfg_{\cA^\vee}=\sum_\sigma\sigma^*\mfg_{\cA^\vee}.
$$
Since $[N]_*\mfg_{\cA^\vee}=\mfg_{\cA^\vee}$ by \ref{mainth0} (c), we obtain 
that 
$$
[N]^*\mfg_{\cA^\vee}=\sum_\sigma\sigma^*\mfg_{\cA^\vee}
$$
from which the Lemma follows. 
\end{proof}
The proof of Theorem \ref{polg} now follows from Lemma 
\ref{consttorth}, Proposition \ref{polgpr} and Lemma \ref{prothlink}.

\bibliographystyle{alpha}

\end{document}